 \def\ims@thmshape{3}
\newcommand\ackname{Remerciements}
  \newenvironment{acknowledgements}{%
      \titlepage
      \null\vfil
      \@beginparpenalty\@lowpenalty
      \begin{center}%
        \bfseries \ackname
        \@endparpenalty\@M
      \end{center}}%
     {\par\vfil\null\endtitlepage}
\theoremstyle{plain}
\newtheorem{theorem}{Theorem}
\newtheorem{assumption}{H\hspace{-3pt}}
\newtheorem*{assumption*}{A}
\newtheorem{assumptionAR}{AR\hspace{-3pt}}
\newtheorem{assumptionCN}{CN\hspace{-3pt}}
\newaliascnt{proposition}{theorem}
\newtheorem{proposition}[proposition]{Proposition}
\newaliascnt{lemma}{theorem}
\newtheorem{lemma}[lemma]{Lemma}
\newaliascnt{corollary}{theorem}
\newtheorem{corollary}[corollary]{Corollary}
\theoremstyle{definition}
\newaliascnt{definition}{theorem}
\newaliascnt{remark}{theorem}
\theoremstyle{remark}
\newcommand{\1}{\mathbbm{1}}
\newcommand{\B}{\mathcal{B}}
\DeclareMathOperator*{\pCN}{cn}
\newcommand{\tildegamma}{\widetilde{\gamma}}
\newcommand{\inv}[1]{\frac{1}{#1}}
\newcommand{\N}{\ensuremath{\mathbb{N}}}
\newcommand{\R}{\ensuremath{\mathbb{R}}}
\newcommand{\rset}{\ensuremath{\mathbb{R}}}
\newcommand{\E}{\mathbb{E}}
\newcommand{\Kfrac}{\mathscr{K}}
\newcommand{\abs}[1]{\left\vert #1 \right\vert}
\newcommand{\norm}[1]{\Vert #1 \Vert}
\newcommand{\parenthese}[1]{(#1)}
\newcommand{\parentheseDeux}[1]{\left[ #1 \right]}
\newcommand{\defEns}[1]{\left\lbrace #1 \right\rbrace }
\newcommand{\eqdef}{\overset{\text{\tiny def}} =}
\newcommandx\probaMarkovTilde[2][2=]
\newcommand{\probaMarkov}[2]{\mathbb{P}_{#1}\left[ #2\right]}
\newcommand{\expeMarkov}[2]{\mathbb{E}_{#1} \left[ #2 \right]}
\newcommand{\probaMarkovTildeDeux}[2]{\widetilde{\mathbb{P}}_{#1} \left[ #2 \right]}
\newcommand{\expeMarkovTilde}[2]{\widetilde{\mathbb{E}}_{#1} \left[ #2 \right]}
\newcommand{\sachant}[1]{\left| #1 \right.}
\renewenvironment{proof}[1][{\textit{Proof:}}]{\begin{trivlist} \item[\em{\hskip \labelsep #1}]}{\ensuremath{\qed} \end{trivlist}}
\newcommand{\couplage}[2]{\mathcal{C}(#1,#2)}
\newcommand{\Pens}{\mathcal{P}}
\newcommand{\phiV}{\phi \circ V}
\newcommand{\smallSet}{C}
\newcommand{\smallSetD}{D}
\newcommand{\filtration}{\mathcal{F}}
\newcommand{\filtrationTilde}{\widetilde{\mathcal{F}}}
\newcommand{\dint}{\mathrm{d}}
\newcommand{\Vl}{{\mathcal{V}}}
\newcommand{\flecheLimite}{\underset{n\to+\infty}{\longrightarrow}}
\newcommand{\plusinfty}{+\infty}
\newcommand{\hilbert}{\mathcal{H}}
\newcommand{\covariance}{\mathsf{C}}
\newcommand{\setAbruit}{\mathscr{I}}
\newcommand{\setaccept}{\mathscr{A}}
\newcommand{\setacceptdeux}{\tilde{\mathscr{A}}}
\newcommand{\setreject}{\mathscr{R}}
\def\ie{\textit{i.e.}}
\def\eqsp{\;}
\newcommand{\coint}[1]{\left[#1\right)}
\newcommand{\ocint}[1]{\left(#1\right]}
\newcommand{\ooint}[1]{\left(#1\right)}
\newcommand{\ccint}[1]{\left[#1\right]}
\def\inv{\leftarrow}
\newcommand{\boule}[2]{\operatorname{B}(#1,#2)}
\newcommand{\boulefermee}[2]{\overline{\operatorname{B}}(#1,#2)}
\newcommand{\deta}{d_{\eta}}
\def\as{\ensuremath{\text{a.s}}}
\def\rmd{\mathrm{d}}
\newcommandx\sequence[3][2=,3=]
\newcommandx{\sequencen}[2][2=n\in\N]{\ensuremath{\{ #1, \eqsp #2 \}}}
\newcommandx\sequenceDouble[4][3=,4=]
\newcommandx{\sequencenDouble}[3][3=n\in\N]{\ensuremath{\{ (#1_{n},#2_{n}), \eqsp #3 \}}}
\newcommand{\wrt}{w.r.t.}
\def\iid{i.i.d.}
\newcommand{\nset}{\mathbb{N}}
\newcommand{\zset}{\mathbb{Z}}
\def\rme{\mathrm{e}}
\begin{document}

\begin{frontmatter}

  \title{Subgeometric rates of convergence in Wasserstein distance for Markov
    chains} \runtitle{Subgeometric rates of convergence}

\begin{aug}
  %\author{Alain Durmus\thanksref{t2} \ead[label=e1]{alain.durmus@telecom-paristech.fr}},
  \author{\fnms{Alain} \snm{Durmus}\thanksref{t1}     \ead[label=e1]{alain.durmus@telecom-paristech.fr}},
  \author{\fnms{Gersende} \snm{Fort} \thanksref{t2}   \ead[label=e2]{gersende.fort@telecom-paristech.fr}},
  \and
  \author{\fnms{\'Eric} \snm{Moulines} \thanksref{t3} \ead[label=e3]{eric.moulines@telecom-paristech.fr}}
  \address{\thanksref{t1} \thanksref{t2} \thanksref{t3} LTCI, Telecom ParisTech \& CNRS, 46 rue Barrault, 75634 Paris Cedex 13, France}

 \thankstext{t1}{\printead{e1}}
 \thankstext{t2}{\printead{e2}}
 \thankstext{t3}{\printead{e3}}
\affiliation{\thanksref{t2} CNRS LTCI ; Télécom ParisTech}
\affiliation{ \thanksref{t3} Institut Mines-Télécom ; Télécom ParisTech ; CNRS LTCI }
\runauthor{A. Durmus, G. Fort \and \'E Moulines}
%\runauthor{}

\end{aug}

\maketitle

\begin{abstract}
  { In this paper, we provide sufficient conditions for the existence of the
    invariant distribution and for subgeometric rates of convergence in Wasserstein
    distance for general state-space Markov chains which are (possibly) not
    irreducible.  Compared to \cite{butkovsky:2012}, our approach is based on a
    purely probabilistic coupling construction which allows to retrieve rates
    of convergence matching those previously reported for convergence in total
    variation in \cite{Douc07computableconvergence}.

    Our results are applied to establish the subgeometric ergodicity in
    Wasserstein distance of non-linear autoregressive models and of the
    pre-conditioned Crank-Nicolson Markov chain Monte Carlo algorithm in
    Hilbert space. }
\end{abstract}

\begin{keyword}[class=MSC]
  \kwd{60J10}
  \kwd{60B10}
  \kwd{60J05}
  \kwd{60J22}
  \kwd{65C40}
\end{keyword}

\begin{keyword}
\kwd{Markov chains}
\kwd{Wasserstein distance}
\kwd{Subgeometric ergodicity}
\kwd{Markov chain Monte Carlo  in infinite dimension}
\end{keyword}
\end{frontmatter}

\def\sectionautorefname{Section}
\def\subsectionautorefname{Section}
\def\subsubsectionautorefname{Section}
\section{Introduction}
Convergence of general state-space Markov chains in total variation distance
(or $V$-total variation) has been studied by many authors. There is a wealth of
contributions establishing explicit rates of convergence under conditions
implying geometric ergodicity; see \cite[Chapter~16]{bible},
\cite{roberts:rosenthal:2004}, \cite{baxendale:2005},
\cite{butkovsky:veretennikov:2013} and the references therein. Subgeometric (or
Riemanian) convergence has been more scarcely studied; \cite{Tuominen_subgeo}
characterized subgeometric convergence using a sequence of drift conditions,
which proved to be difficult to use in practice. \cite{jarner:roberts:2002}
have shown that, for polynomial convergence rates, this sequence of drift
conditions can be replaced by a single drift condition, which shares some
similarities with the classical Foster-Lyapunov approach for the geometric
ergodicity.  This result was later extended by \cite{fort:moulines:2003} and
\cite{douc:fort:moulines:soulier:2004} to general subgeometric rates of
convergence. Explicit convergence rates were obtained in
\cite{vertennikov:1997,fort:moulines:2003,douc:guillin:moulines:2008} and
\cite{andrieu:fort:vihola:2014}.

The classical proofs of convergence in total variation distance are based either
on a regenerative or a pairwise coupling construction, which requires the existence of
accessible small sets and additional assumptions to control the moments of the
successive return time to these sets. The existence of an accessible small set
implies that the chain is irreducible.

In this paper, we establish rates of convergence for general state-space Markov
chains which are (possibly) not irreducible.  In such cases, Markov chains
might not converge in total variation distance, but nevertheless may converge
in a weaker sense; see for example \cite{madras:sezr:2010}.  We study in this
paper the convergence in Wasserstein distance, which also implies the weak
convergence. The use of the Wasserstein distance to obtain explicit rates of
convergence has been considered by several authors, most often under conditions
implying geometric ergodicity. A significant breakthough in this domain has
been achieved in \cite{WeakHarris} %% , which has proposed to adapt the definition
%% of small set to the Wasserstein distance
.  The main motivation of \cite{WeakHarris} was the convergence of the
solutions of stochastic delay differential equations (SDDE) to their invariant
measure. Nevertheless, the techniques introduced in \cite{WeakHarris} laid the
foundations of several contributions. \cite{hairer:stuart:vollmer:2012} used
these techniques to prove the convergence of Markov chain Monte Carlo
algorithms in infinite dimensional Hilbert spaces.  An application for switched
and piecewise deterministic Markov processes can be found in
\cite{cloez:hairer}. The results of \cite{WeakHarris} were generalized by
\cite{butkovsky:2012} which establishes conditions implying the existence and
uniqueness of the invariant distribution, and the subgeometric ergodicity of
Markov chains (in discrete-time) and Markov processes (in continuous-time).
\cite{butkovsky:2012} used this result to establish subgeometric ergodicity of
the solutions of SDDE.  Nevertheless, when applied to the
  context of $V$-total variation, the rates obtained in \cite{butkovsky:2012}
in discrete-time do not exactly match the rates established in
\cite{douc:fort:moulines:soulier:2004}.

In this paper, we complement and sharpen the results presented in
\cite{butkovsky:2012} in the discrete-time setting. The approach developed in
this paper is based on a coupling construction, which shares some similarities
with the pairwise coupling used to prove geometric convergence in $V$-total
variation.  The arguments are therefore mostly probabilistic whereas
\cite{butkovsky:2012} heavily relies on functional analysis techniques and
methods.  We provide a sufficient condition couched in terms of a single drift
condition for a coupling kernel outside an appropriately defined coupling set,
extending the notion of $d$-small set of \cite{WeakHarris}. We then show how
this single drift condition implies a sequence of drift inequalities from which
we deduce an upper bound of some subgeometric moment of the successive return
times to the coupling set. The last step is to show that the Wasserstein
distance between the distribution of the chain and the invariant probability
measure is controlled by these moments.  We apply our results to the
convergence of some Markov chain Monte Carlo samplers with heavy tailed target
distribution and to nonlinear autoregressive models whose the noise
distribution can be singular with the Lebesgue measure. We also study the
convergence of the preconditioned Crank-Nicolson algorithm when the target
distribution has a density \wrt\ a Gaussian measure on an Hilbert space,
under conditions which are weaker than \cite{hairer:stuart:vollmer:2012}.

The paper is organized as follows: in \autoref{sec:main_results}, the main
results on the convergence of Markov chains in Wasserstein distance are
presented, under different sets of assumptions. \autoref{sec:application} is
devoted to the applications of these results.  The proofs are given in
\autoref{sec:proof} and \autoref{sec:proof:application}.

\subsection*{Notations}
Let $(E,d)$ be a Polish space where $d$ is a distance bounded by $1$.
We denote by $\B(E)$ the associated Borel $\sigma$-algebra and $\Pens(E)$ the set of
probability measures on $(E, \B(E))$.
Let $\mu,\nu \in \Pens(E)$;  $\lambda$ is a coupling of $\mu$ and $\nu$ if $\lambda$ is a probability on the product space $(E \times E, \B(E \times E))$, such that $\lambda(A \times E) = \mu(A)$ and $\lambda(E \times A)= \nu(A)$ for all $A \in \B(E)$.
The set of couplings of $\mu,\nu \in \Pens(E)$ is denoted $\couplage{\mu}{\nu}$. Let $P$ be Markov kernel of $E \times \B(E)$; a Markov kernel $Q$ on $(E \times E, \B(E\times E))$ such that, for every $x,y \in E$, $Q((x,y),\cdot)$ is a coupling of $P(x,\cdot)$ and $P(y,\cdot)$ is a \emph{coupling kernel} for $P$.

The Wasserstein metric associated with $d$, between two probability measures
$\mu, \nu \in \Pens(E)$  is defined by:
\begin{equation}
\label{def_wasser}
W_{d} (\mu,\nu) = \inf_{ \gamma \in \couplage{\mu}{\nu}} \int_{E\times E}d(x,y) \dint \gamma(x,y) \eqsp.
\end{equation}
When $d$ is the trivial metric $d_0(x,y) = \1_{x \neq y}$, the associated
Wasserstein metric is the total variation distance
$W_{d_0}(\mu,\nu) =  \sup_{A\in \B(E)} \abs{\mu(A)-\nu(A)}$.
Since $d$ is bounded, the Monge-Kantorovich duality Theorem implies (see \cite[Remark
6.5]{VillaniTransport}) that the lower bound in \eqref{def_wasser} is realized.
In addition, $W_{d}$ is a metric on $\Pens(E)$
and  $\Pens(E)$ equipped with $W_{d}$ is a Polish space; see
\cite[Theorems~6.8 and ~6.16]{VillaniTransport}.  Finally, the convergence in $W_{d}$ is
equivalent to the  weak convergence, since $W_d$ is equivalent to the Prokorov metric (see e.g.  \cite[Theorem 6.8 and 6.9]{billingsley:1999}).

Let $\Lambda_0$ be the set of measurable functions $ r_0 : \R_+ \rightarrow
\coint{2,+\infty}$, such that $r_0$ is non-decreasing, $x \mapsto \log (
r_0(x)) / x$ is non-increasing and $\lim_{x \to \infty} \log(r_0(x))/x = 0$.
Denote by $\Lambda$ the set of positive functions $r : \R_+ \rightarrow
\ooint{0,\plusinfty}$, such that there exists $r_0 \in \Lambda_0$ satisfying:
\begin{equation}
\label{eq:subgeometric-function}
0 < \liminf_{x \rightarrow + \infty } r(x) / r_0(x) \leq
\limsup_{x \rightarrow + \infty } r(x) / r_0(x) < + \infty \eqsp.
\end{equation}
Finally, let $\mathbb{F}$ be the set of concave increasing functions $\phi :
\R_+ \to \R_+$, continuously differentiable on $\coint{1,\plusinfty}$, and
satisfying $\lim_{x \to +\infty} \phi(x)= +\infty$ and $\lim_{x \to +\infty}
\phi'(x) =0$. For $\phi \in \mathbb{F}$, we denote by $\phi^\inv$ the inverse
of $\phi$.

%%% Local Variables:
%%% mode: latex
%%% TeX-master: "main"
%%% End:

\section{Main results}
\label{sec:main_results}
The key ingredient for the derivation of the convergence of a Markov kernel $P$ on $(E,d)$ is the existence of a coupling kernel $Q((x,y),\cdot)$ for $P$
satisfying a strong contraction property when $(x,y)$ belongs to a set $\Delta$, referred to as a \emph{coupling set}.
For $\Delta \in \B(E \times E)$, a positive integer $\ell$ and $\epsilon > 0$, consider the following assumption:
\begin{assumption}[$\Delta, \ell,\epsilon$]
\label{hyp:small-set}
\begin{enumerate}[(i)]
\item \label{item:contract_coupling_2} $Q$ is a $d$-weak-contraction: for every $x,y \in E$, $Qd(x,y) \leq d(x,y)$.
\item \label{item:contract_coupling_3} $Q^\ell d(x,y) \leq (1-\epsilon)d(x,y)$,
  for every $(x,y) \in \Delta$.
\end{enumerate}
\end{assumption}
A set $\Delta$ satisfying
\autoref{hyp:small-set}($\Delta,\ell,\epsilon$)-\eqref{item:contract_coupling_3}
will be referred to as a $(\ell,\epsilon,d)$-coupling set. Of course the
definition of this set also depends on the choice of the coupling kernel $Q$,
but this dependence is implicit in the notation.  If $d=d_0$ and $\Delta$ is a 
$(1,\epsilon)$-pseudo small set (with $\epsilon > 0$) in the sense that 
\[
\inf_{(x,y) \in \Delta} [P(x,\cdot) \wedge P(y,\cdot)](E) \geq \epsilon \eqsp, 
\]
then \autoref{hyp:small-set}($\Delta,1 ,\epsilon$) is satisfied by the pairwise coupling
kernel (see \cite{roberts:rosenthal:2001}). Furthermore, a simple way to check that $\Delta \in \B(E \times E)$ is a
$(1,\epsilon,d)$-coupling set is the following. Let $\epsilon >0$. If for all $(x, y) \in E \times E$, $W_d (P (x, \cdot), P (y, \cdot)) \leq d(x, y)$, and for
all $(x, y) \in \Delta $, $W_d (P (x, \cdot), P (y, \cdot)) \leq (1 - \epsilon)d(x, y)$, then \cite[ corollary 5.22]{VillaniTransport}
implies that there exists a Markov kernel $Q$ on $(E \times E, \B(E \times E))$ satisfying
\autoref{hyp:small-set}($\Delta,1,\epsilon$).

The following theorem shows that, under
\autoref{hyp:small-set}($\Delta,\ell,\epsilon$) and a condition which
essentially claims that if the first moment of the hitting time to the coupling
set $\Delta$ is finite, the Markov kernel $P$ admits a unique invariant
distribution.

\begin{theorem}
\label{theo:existence-pi}
Assume that there exist 
\begin{enumerate}[(i)]
\item a coupling kernel $Q$ for $P$, a set $\Delta \in \B(E \times E)$, $\ell
  \in \nset^*$ and $\epsilon > 0$ such that \autoref{hyp:small-set}($\Delta,
  \ell, \epsilon$) holds,
\item  a measurable function $\Vl: E^2 \to \coint{1,\infty}$ and a constant $b < \infty$ such that the following drift condition is satisfied.
\begin{equation}
\label{eq:simple-drift-condition}
Q \Vl(x,y) \leq \Vl(x,y) - 1 + b \1_{\Delta}(x,y) \eqsp, \quad \sup_{(x,y) \in \Delta} Q^{\ell-1} \Vl(x,y)  < \plusinfty \eqsp.
\end{equation}
\item an increasing sequence of integers $\{n_k,
k \in \nset \}$ and a concave function $\psi: \rset^+ \to \rset^+$ such that
$\lim_{v \to \plusinfty} \psi(v) = \plusinfty$ and
\begin{equation}
\label{eq:borne_suite_infini}
\sup_{k \in \nset} P^{n_k} [\psi \circ \Vl_{x_0}](x_0)   < \plusinfty \eqsp, \qquad P \Vl_{x_0} (x_0 ) < \plusinfty  \quad \text{for some $x_0 \in E$,}
\end{equation}
where $\Vl_{x_0}= \Vl(x_0,\cdot)$. 
\end{enumerate}
Then, $P$ admits a unique invariant distribution.
\end{theorem}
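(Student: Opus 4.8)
The plan is to produce the invariant law as the $W_d$-limit of the subsequence $\{P^{n_k}(x_0,\cdot)\}_k$, exploiting that $(\Pens(E),W_d)$ is a \emph{complete} metric space, and to deduce uniqueness from the contractivity of the coupling kernel. Both parts run on one quantitative ingredient, a strengthening of \autoref{hyp:small-set}: I would first prove that
\[
Q^n d(x,y)\le \rho_n\bigl(\Vl(x,y)\bigr)\quad\text{whenever } \Vl(x,y)<\infty,
\]
where each $\rho_n\colon\R_+\to\ccint{0,1}$ is nondecreasing and $\rho_n(v)\to 0$ as $n\to\infty$ for every fixed $v$ (a polynomial rate is enough). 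To obtain it, observe that $n\mapsto d(X_n,Y_n)$ is a bounded supermartingale under $Q$ by \eqref{item:contract_coupling_2}, hence converges a.s. and in $L^1$; the drift \eqref{eq:simple-drift-condition} bounds $\E_{(x,y)}[\tau_\Delta]$ in terms of $\Vl(x,y)$ via the comparison theorem, while $\sup_\Delta Q^{\ell-1}\Vl<\infty$ together with one further step of the drift guarantees that the coupled chain started on $\Delta$ is, $\ell$ steps later, again at a point of bounded $\Vl$-value. Iterating, the successive return times to $\Delta$ (spaced by $\ell$) have uniformly bounded conditional means and are a.s. finite, so the number $N_n$ of completed ``$\ell$-contraction epochs'' before time $n$ tends to $\infty$ with $\probaMarkov{(x,y)}{\eta_m>n}$ controlled by Markov's inequality. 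On each epoch \eqref{item:contract_coupling_3} contracts $d$ by a factor $1-\epsilon$; optional stopping on the supermartingale then yields $\E_{(x,y)}[d(X_n,Y_n)]\lesssim\E_{(x,y)}\bigl[(1-\epsilon)^{N_n}\bigr]+\probaMarkov{(x,y)}{\eta_m>n}$, which is the displayed bound. In particular $d(X_n,Y_n)\to 0$ a.s.

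For \emph{existence} set $\mu_k:=P^{n_k}(x_0,\cdot)$. Since $\mu_k=\mu_j\,P^{\,n_k-n_j}$ for $j<k$, joint convexity of $W_d$ (gluing) and the estimate above give
\[
W_d(\mu_j,\mu_k)\ \le\ \int_E Q^{n_j}d(x_0,z)\,P^{\,n_k-n_j}(x_0,\mathrm{d}z)\ \le\ \int_E \rho_{n_j}\bigl(\Vl_{x_0}(z)\bigr)\,P^{\,n_k-n_j}(x_0,\mathrm{d}z)\eqsp.
\]
I would then show this vanishes as $j\to\infty$, \emph{uniformly in $k$}: this is the step where \eqref{eq:borne_suite_infini} and the concavity of $\psi$ enter — by Jensen, when $z$ is drawn from $P^{n_k}(x_0,\cdot)$ the return time to $\Delta$ from $(x_0,z)$ has a $\psi$-moment bounded uniformly in $k$, and this has to be parlayed, together with the recurrence to $\Delta$ furnished by the drift, into a uniform control on how much mass $P^{m}(x_0,\cdot)$ puts where $\Vl_{x_0}$ is large. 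Granting this, $\{\mu_k\}$ is Cauchy in the complete space $(\Pens(E),W_d)$, so $\mu_k\to\pi$ for some $\pi\in\Pens(E)$. Invariance follows by the same gluing bound with $n_k$ and $n_k+1$: $W_d\bigl(P^{n_k+1}(x_0,\cdot),\mu_k\bigr)\le\int_E Q^{n_k}d(x_0,z)\,P(x_0,\mathrm{d}z)\to0$ by the contraction estimate, dominated convergence and $P\Vl_{x_0}(x_0)<\infty$; since $P$ is $1$-Lipschitz (hence continuous) on $(\Pens(E),W_d)$ by \eqref{item:contract_coupling_2}, passing to the limit in $\mu_k P=P^{n_k+1}(x_0,\cdot)$ gives $\pi P=\pi$.

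For \emph{uniqueness}, let $\pi'$ be any invariant probability. Writing $\pi'=\pi' P^{n_k}$ and gluing,
\[
W_d(\mu_k,\pi')\ \le\ \int_E Q^{n_k}d(x_0,z)\,\pi'(\mathrm{d}z)\ \le\ \int_E\rho_{n_k}\bigl(\Vl_{x_0}(z)\bigr)\,\pi'(\mathrm{d}z)\eqsp,
\]
which tends to $0$ by dominated convergence provided $\Vl_{x_0}<\infty$ $\pi'$-a.s.; hence $\pi'=\lim_k\mu_k=\pi$. The main obstacle is thus of a single nature, appearing both in the uniform-in-$k$ estimate of the existence step and in the $\pi'$-a.s.\ finiteness of $\Vl_{x_0}$: one must extract, from the single drift \eqref{eq:simple-drift-condition}, the contraction \autoref{hyp:small-set}, and the subgeometric moment bound \eqref{eq:borne_suite_infini}, a quantitative statement controlling how far from the coupling set $\Delta$ the chain can wander — the dependence of $\rho_n$ on $\Vl$ and the sublinear growth of $\psi$ have to be balanced against one another with care, and this is where I expect the real work to lie.
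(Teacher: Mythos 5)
Your quantitative ingredient is exactly the paper's: the supermartingale property of $d(X_n,Y_n)$, the drift \eqref{eq:simple-drift-condition} plus $\sup_\Delta Q^{\ell-1}\Vl<\infty$ to get uniformly bounded conditional means of the successive return times $T_m$ to $\Delta$, and Markov's inequality, yield $Q^n d(x,y)\le C\,\Vl(x,y)\log(n)/n$ (the paper's bound \eqref{eq:bornealain}, i.e.\ your $\rho_n$). Your uniqueness and invariance steps are also essentially the paper's; note that the worry about $\pi'$-a.s.\ finiteness of $\Vl_{x_0}$ is vacuous, since $\Vl$ is finite-valued by hypothesis and $Q^nd\le 1$, so dominated convergence with the constant dominating function $1$ applies directly (this is how the paper proves uniqueness, comparing two invariant laws without reference to the constructed limit).

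The genuine gap is in the existence step, precisely at the point you defer. Your Cauchy estimate reads $W_d(\mu_j,\mu_k)\le \int_E \rho_{n_j}\bigl(\Vl_{x_0}(z)\bigr)\,P^{n_k-n_j}(x_0,\mathrm{d}z)$, so you need to control how much mass $P^{m}(x_0,\cdot)$ puts where $\Vl_{x_0}$ is large for $m=n_k-n_j$ --- times that in general do \emph{not} belong to the sequence $\{n_l\}$. Hypothesis \eqref{eq:borne_suite_infini} only bounds $P^{n_l}[\psi\circ\Vl_{x_0}](x_0)$ along the given times, and no such bound at intermediate times can be extracted from the other assumptions: the drift \eqref{eq:simple-drift-condition} concerns $\Vl$ along the \emph{coupled} chain (both coordinates moving), hence says nothing about $\Vl(x_0,X_m)$ with the first coordinate frozen, and nothing about $P^m[\psi\circ\Vl_{x_0}](x_0)$ for arbitrary $m$. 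So the plan of showing that $\{P^{n_k}(x_0,\cdot)\}_k$ itself is Cauchy does not go through. The paper avoids this by reversing the roles in the decomposition: it compares $P^{n}(x_0,\cdot)$ with $P^{n+n_k}(x_0,\cdot)=\bigl[P^{n_k}(x_0,\cdot)\bigr]P^{n}$, coupling $\delta_{x_0}$ with $P^{n_k}(x_0,\cdot)$ (where the $\psi$-moment bound applies) and letting the contraction act over the $n$ base steps; splitting at a level $M$ and optimizing ($M=n/\log n$, using that $\psi(x)/x$ is non-increasing) gives $W_d(P^{n}(x_0,\cdot),P^{n+n_k}(x_0,\cdot))\le C/\psi(n/\log n)$ uniformly in $k$. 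It then builds a Cauchy sequence $\{P^{m_k}(x_0,\cdot)\}$ with $m_k$ a cumulative sum of selected $n_l$'s, so that each increment $m_{k+1}-m_k$ is an element of $\{n_l\}$; the limit is shown invariant exactly as you propose (using $P\Vl_{x_0}(x_0)<\infty$ and the $W_d$-nonexpansiveness of $P$). Either repair your argument along these lines, or supply the missing uniform-in-$m$ control, which the stated hypotheses do not provide.
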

\begin{proof}
See \autoref{subsec:proof:theo:existence_pi}.
\end{proof}
If we now combine \autoref{hyp:small-set}($\Delta,\ell,\epsilon$) with a
condition which implies the control of the tail probabilities of the successive
return times to the coupling sets (more precisely, of the moments of order
larger than one of these return times) then the Wasserstein distance between
$P^n(x,\cdot)$ and $P^n(y,\cdot)$ may be shown to decrease at a subgeometric
rate.  To control these moments, it is quite usual to consider drift
conditions. In this paper, we focus on a class of drift conditions which has
been first introduced in \cite{douc:fort:moulines:soulier:2004}.
For $\Delta \in \B(E \times E)$, a function $\phi \in \mathbb{F}$,  a measurable function $V: E \rightarrow \coint{1, +\infty}$,
consider the following assumption:
\begin{assumption}[$\Delta,\phi,V$]
\label{hyp:drift_double}
\begin{enumerate}[(i)] 
\item There exists a constant $b < \infty$ such that for all $x,y \in E$:
\begin{equation}
\label{eq:drift_noyau_R2}
PV(x) + PV(y) \leq V(x) + V(y) - \phi(V(x)+V(y)) + b \1_{\Delta} (x,y) \eqsp.
\end{equation}
\item $\sup_{(x,y) \in \Delta} \{V(x) + V(y) \} < \plusinfty$.
\end{enumerate}
\end{assumption}
Not surprisingly, this condition implies that the return time to the coupling
set $\Delta$ possesses a first moment. This property combined with
\autoref{theo:existence-pi} yields
\begin{corollary}
\label{coro:existence_pi}
Assume that there exist a coupling kernel $Q$ for $P$, $\Delta \in \B(E \times E)$, $\ell \in \nset^*$, $\epsilon >0$, $\phi \in \mathbb{F}$ and $V : E \to \coint{1,\infty}$ such that \autoref{hyp:small-set}($\Delta,
\ell,\epsilon$)-\autoref{hyp:drift_double}($\Delta,\phi,V$) are satisfied.  Then,
$P$ admits a unique invariant probability measure $\pi$ and $\int_E \phi
  \circ V(x) \pi(\rmd x) < \infty$.
\end{corollary}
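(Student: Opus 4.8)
The plan is to derive \autoref{coro:existence_pi} from \autoref{theo:existence-pi} by manufacturing the Lyapunov functions it requires out of $V$ and $\phi$, and then to establish the moment bound separately. Throughout write $(f\oplus g)(x,y):=f(x)+g(y)$; since $Q((x,y),\cdot)$ is a coupling of $P(x,\cdot)$ and $P(y,\cdot)$, we have $Q[f\oplus g](x,y)=Pf(x)+Pg(y)$ for nonnegative measurable $f,g$, so \eqref{eq:drift_noyau_R2} reads exactly $Q[V\oplus V]\le V\oplus V-\phi\circ(V\oplus V)+b\,\1_\Delta$. Set $a:=1\vee\phi(2)^{-1}$ (recall $\phi(2)>0$) and $\Vl:=a\,(V\oplus V)$. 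Since $V\ge1$, we have $\Vl\ge 2a\ge1$ and $a\,\phi(V\oplus V)\ge a\,\phi(2)\ge1$, so the previous display yields the drift $Q\Vl\le\Vl-1+ab\,\1_\Delta$ of \eqref{eq:simple-drift-condition}. Dropping the $-\phi$ term gives $Q[V\oplus V]\le V\oplus V+b$, hence $Q^{\ell-1}[V\oplus V]\le V\oplus V+(\ell-1)b$ and therefore $\sup_\Delta Q^{\ell-1}\Vl\le a\bigl(\sup_\Delta(V\oplus V)+(\ell-1)b\bigr)<\plusinfty$ by the second part of \autoref{hyp:drift_double}($\Delta,\phi,V$). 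This is condition (ii) of \autoref{theo:existence-pi}; condition (i) is \autoref{hyp:small-set}($\Delta,\ell,\epsilon$) itself.

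For condition (iii) I would first restrict the coupling drift to the diagonal. Put $C:=\{x\in E:(x,x)\in\Delta\}$ and $\tilde\phi(v):=\tfrac12\phi(2v)$, which again lies in $\mathbb{F}$; evaluating \eqref{eq:drift_noyau_R2} at $(x,x)$ and halving gives the marginal drift $PV\le V-\tilde\phi\circ V+\tfrac b2\,\1_C$, with $\sup_C V\le\tfrac12\sup_\Delta(V\oplus V)<\plusinfty$. Iterating it and using $P^nV\ge0$, $\tilde\phi\ge0$ yields $\sum_{k=0}^{n-1}P^k[\tilde\phi\circ V](x)\le V(x)+\tfrac{nb}2$ for all $x$ and $n$; dividing by $n$ and letting $n\to\plusinfty$ shows $\liminf_n P^n[\tilde\phi\circ V](x_0)\le b/2$ for every $x_0\in E$, so one may extract an increasing sequence $\{n_k\}$ with $B:=\sup_k P^{n_k}[\tilde\phi\circ V](x_0)<\plusinfty$. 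Now $\Vl_{x_0}(\cdot)=a\,V(x_0)+a\,V(\cdot)$; choosing the concave function $\psi(w):=\tilde\phi(w/a)$ and using the subadditivity of $\tilde\phi$ (it is concave with $\tilde\phi(0)\ge0$), we get $\psi\circ\Vl_{x_0}(\cdot)=\tilde\phi\bigl(V(x_0)+V(\cdot)\bigr)\le\tilde\phi(V(x_0))+\tilde\phi\circ V$, whence $\sup_k P^{n_k}[\psi\circ\Vl_{x_0}](x_0)\le\tilde\phi(V(x_0))+B<\plusinfty$; moreover $P\Vl_{x_0}(x_0)=a\,V(x_0)+a\,PV(x_0)<\plusinfty$ by the marginal drift. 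Hence \eqref{eq:borne_suite_infini} holds and \autoref{theo:existence-pi} provides a unique invariant probability measure $\pi$.

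It remains to bound $\int_E\phi\circ V\,\rmd\pi$. Since $\phi(v)\le\phi(2v)=2\tilde\phi(v)$, it suffices to show $\pi(\tilde\phi\circ V)\le b/2$. I expect this to be the main obstacle: no a priori moment of $\pi$ is available, and in fact $\pi(V)$ can be infinite, so one cannot integrate the marginal drift against $\pi$ and cancel. The remedy is a double truncation. For integers $M,N\ge1$ set $V_N:=V\wedge N$ and $g_M:=(\tilde\phi\circ V)\wedge M$; then
\[
PV_N+g_M\,\1_{\{V<N\}}\le V_N+\tfrac b2\,\1_C\qquad\text{pointwise.}
\]
Indeed, on $\{V\ge N\}$ the second term vanishes and $PV_N\le N=V_N$, while on $\{V<N\}$ one has $V_N=V$, $PV_N\le PV$ and $g_M\le\tilde\phi\circ V$, so the inequality follows from the marginal drift. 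All the functions here are bounded, so integrating against the invariant $\pi$ and using $\pi(PV_N)=\pi(V_N)$ gives $\pi\bigl(g_M\,\1_{\{V<N\}}\bigr)\le\tfrac b2\,\pi(C)\le b/2$; letting $N\to\plusinfty$ and then $M\to\plusinfty$, monotone convergence yields $\pi(\tilde\phi\circ V)\le b/2$, hence $\int_E\phi\circ V\,\rmd\pi\le b<\plusinfty$.
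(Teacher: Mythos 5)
Your proof is correct, and for the existence/uniqueness part it follows the same strategy as the paper: verify the three hypotheses of \autoref{theo:existence-pi}. The differences there are cosmetic. The paper takes $\Vl(x,y)=1+(V(x)+V(y))/\phi(2)$ instead of your $a(V\oplus V)$, and obtains the marginal drift by evaluating \eqref{eq:drift_noyau_R2} at the pair $(x,x_0)$ (yielding $PV\leq V-\phi\circ V+b+V(x_0)$ and then taking $\psi=\phi$ in the Ces\`aro extraction), whereas you evaluate on the diagonal $(x,x)$ to get $PV\leq V-\tilde\phi\circ V+\tfrac b2\1_C$ with $\tilde\phi(v)=\tfrac12\phi(2v)$ and then pass from $\tilde\phi\circ V$ to $\psi\circ\Vl_{x_0}$ by subadditivity; your handling of this last step with $\psi(w)=\tilde\phi(w/a)$ is in fact a more explicit verification of \eqref{eq:borne_suite_infini} than the paper's rather terse ``set $\psi=\phi$''. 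The genuine divergence is the moment bound: the paper simply cites \cite[Lemma~4.1]{butkovsky:2012} for $\int_E\phi\circ V\,\rmd\pi<\infty$, while you give a self-contained double-truncation argument ($V\wedge N$ and $(\tilde\phi\circ V)\wedge M$, integrate the truncated drift against $\pi$, cancel $\pi(V\wedge N)$, then monotone convergence). That argument is correct — the case distinction $\{V\geq N\}$/$\{V<N\}$ and the cancellation are valid because everything is bounded — and it buys independence from the external reference plus the explicit bound $\pi(\phi\circ V)\leq b$, at the cost of a slightly longer write-up.
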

\begin{proof}
See \autoref{subsec:coro:existence_pi}.
\end{proof}
We now derive expressions of the rate of convergence and make explicit the dependence upon
the initial condition of the chain.
%The rate of convergence depends of the
%concave function $\phi$ and the integrated subgeometric rate $H_\phi$ defined
%as follows (see also~\cite{douc:fort:moulines:soulier:2004}).
For $\phi \in \mathbb{F}$, set
\begin{equation}
\label{eq:defHphi}
H_\phi(t) = \int_1 ^t \frac{1}{\phi(s)} \dint s\eqsp.
\end{equation}
Since for $t \geq 1$, $\phi(t) \leq \phi(1) + \phi'(1)(t-1)$, the function
$H_\phi$ is monotone increasing to infinity, twice continuously differentiable
and concave. Its inverse, denoted $H_\phi^{\inv}$, is well defined on
$\rset_+$, is twice continuously differentiable and convex (see e.g.
\cite[Section 2.1]{douc:fort:moulines:soulier:2004}). 
\begin{theorem}
\label{theo:convergence_log_double_drift_1}
Assume that there exist a coupling kernel $Q$ for $P$, $\Delta \in \B(E \times E)$, $\ell \in \nset^*$, $\epsilon >0$, $\phi \in \mathbb{F}$ and $V : E \to \coint{1,\infty}$ such that \autoref{hyp:small-set}($\Delta,\ell,\epsilon$)-\autoref{hyp:drift_double}($\Delta,\phi,V$)
are satisfied.  Let $\pi$ be the invariant probability of $P$.
\begin{enumerate}[(i)]
\item \label{theo:item_rate1} There exist constants $\{C_i\}_{i=1}^3$ such that for all
  $x \in E$ and all $n \geq 1$
\begin{multline*}
  W_{d}(P^n(x,\cdot),\pi)
  \leq C_1 V(x) /H^{\inv}_{\phi}(n/2) + C_2 / \phi(H^{\inv}_{\phi}(n/2)) \\
  + C_3 / H^{\inv}_{\phi}(-\log(1-\epsilon) \, n/\{2( \log(H^{\inv}_{\phi}(n)) - \log(1-\epsilon)) \}) \eqsp.
\end{multline*}
\item \label{theo:item_rate2} For all $\delta \in \ooint{0,1}$, there exists a constant $C_\delta$ such
  that for all $x \in E$ and all $n \geq 1$ 
\begin{equation*}
  W_{d}(P^n(x,\cdot),\pi)
  \leq   C_\delta \, V(x)/ \phi (\{H^{\inv}_{\phi}(n)\}^\delta) \eqsp.
\end{equation*}
\end{enumerate}
The values of the constants $C_i$, for $i=1,2,3$, and $C_\delta$ are given explicitly in
the proof, and depend on $\Delta,  \ell,\epsilon,\phi,V,b$.
\end{theorem}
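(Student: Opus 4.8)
The argument combines the coupling inequality with a subgeometric analysis of the excursions to $\Delta$ of the bivariate chain driven by $Q$, following the drift methodology of \cite{douc:fort:moulines:soulier:2004}; the Wasserstein (rather than total-variation) setting is accommodated by replacing the split-chain/regeneration step by the pairwise coupling $Q$ itself. Write $\check V(x,y) = V(x)+V(y)$. By \autoref{coro:existence_pi}, $P$ admits a unique invariant probability $\pi$ and $\int_E \phi\circ V\,\rmd\pi < \infty$ --- this last fact is essential, since $\int_E V\,\rmd\pi$ need not be finite, and it is what forces the term $C_2/\phi(H_\phi^{\inv}(n/2))$.

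\textbf{Step 1: coupling reduction.} Fix $x\in E$, draw $Y_0 \sim \pi$, and run the chain $\{(X_k,Y_k)\}_{k\ge 0}$ on $E\times E$ with kernel $Q$ and initial law $\delta_x\otimes\pi$. Since $Q$ is a coupling kernel for $P$ and $\pi P^n = \pi$, the law of $(X_n,Y_n)$ lies in $\couplage{P^n(x,\cdot)}{\pi}$, hence
\[
W_d(P^n(x,\cdot),\pi) \le \expeMarkov{x,\pi}{d(X_n,Y_n)} \eqsp .
\]
Everything is thus reduced to estimating $\expeMarkov{x,\pi}{d(X_n,Y_n)}$.

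\textbf{Step 2: a single subgeometric drift for the bivariate chain and its consequences.} Because $Q$ is a coupling kernel, \autoref{hyp:drift_double}($\Delta,\phi,V$) reads $Q\check V \le \check V - \phi\circ\check V + b\1_\Delta$ with $\sup_\Delta \check V < \infty$: this is exactly the class of drift condition studied in \cite[Section~2]{douc:fort:moulines:soulier:2004}. That theory --- based purely on the drift and the comparison theorem, hence requiring no irreducibility --- turns this single inequality into a whole sequence of drift inequalities and yields moment bounds on the successive visit times to $\Delta$. Concretely, with $r_\phi(k) = \phi\circ H_\phi^{\inv}(k)$ (so that $\sum_{j<n} r_\phi(j)\asymp H_\phi^{\inv}(n)$ by \eqref{eq:defHphi}), one obtains, for the return time $\sigma_\Delta = \inf\{k\ge 1 : (X_k,Y_k)\in\Delta\}$, an estimate $\expeMarkov{(x,y)}{\sum_{j=0}^{\sigma_\Delta - 1} r_\phi(j)} \le c_1\check V(x,y) + c_2\1_\Delta(x,y)$, together with a slower, $\phi\circ V$-modulated version of the same type; moreover $Q\check V \le \check V + b$ gives $\sup_\Delta Q^\ell \check V < \infty$, so every excursion begun $\ell$ steps after a visit to $\Delta$ has uniformly controlled $r_\phi$-length.

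\textbf{Step 3: contraction along excursions and extraction of the three rates.} By \autoref{hyp:small-set}, $\{d(X_k,Y_k)\}_k$ is a nonnegative supermartingale ($Qd\le d$) which contracts by the factor $1-\epsilon$ in conditional expectation over any length-$\ell$ window begun in $\Delta$ ($Q^\ell d\le(1-\epsilon)d$). Let $\sigma^{(0)}$ be the first time the pair enters $\Delta$, set recursively $\sigma^{(k+1)} = \ell + \inf\{j\ge\sigma^{(k)}+\ell : (X_j,Y_j)\in\Delta\}$, and let $\nu_n = \#\{k\ge 0 : \sigma^{(k)}\le n\}$; iterating the $\ell$-step contraction via the strong Markov property and using $d\le 1$ yields $\expeMarkov{x,\pi}{d(X_n,Y_n)} \le \expeMarkov{x,\pi}{(1-\epsilon)^{\nu_n}}$. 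One then splits on whether $\sigma^{(0)} > n/2$. On $\{\sigma^{(0)}>n/2\}$, apply Markov's inequality to $H_\phi^{\inv}(\sigma^{(0)})$, using the $\check V$-moment bound to isolate the $x$-contribution $C_1 V(x)/H_\phi^{\inv}(n/2)$ and the $\phi\circ V$-modulated bound together with $\int\phi\circ V\,\rmd\pi<\infty$ (and $\phi(2v)\le 2\phi(v)$) to obtain $C_2/\phi(H_\phi^{\inv}(n/2))$. On $\{\sigma^{(0)}\le n/2\}$, estimate $\expeMarkov{}{(1-\epsilon)^{\nu_n}}\le(1-\epsilon)^m + \probaMarkov{}{\sigma^{(m)}>n}$ for a free integer $m$, bound $\probaMarkov{}{\sigma^{(m)}>n}$ by the $H_\phi^{\inv}$-tail estimate of Step 2 (which is polynomial in $m$ and of order $H_\phi^{\inv}(n)^{-1}$), and optimize $m$; balancing $(1-\epsilon)^m$ against this tail forces $m\asymp \log(H_\phi^{\inv}(n))/(-\log(1-\epsilon))$ and produces exactly the third term $C_3/H_\phi^{\inv}\big(-\log(1-\epsilon)\,n/\{2(\log H_\phi^{\inv}(n)-\log(1-\epsilon))\}\big)$. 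Summing the three contributions gives (i). Part (ii) follows from (i): for $\delta\in\ooint{0,1}$ and $n$ large, each of $H_\phi^{\inv}(n/2)$, $\phi(H_\phi^{\inv}(n/2))$ and the third rate dominates $\phi(\{H_\phi^{\inv}(n)\}^\delta)$ up to a constant, and $V\ge 1$; enlarging the constant to cover small $n$ completes the proof.

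\textbf{Main obstacle.} The conceptual skeleton (coupling $\Rightarrow$ bivariate drift $\Rightarrow$ excursion moments $\Rightarrow$ contraction) is standard; the real work is in Step 3, namely running the subgeometric machinery of \cite{douc:fort:moulines:soulier:2004} simultaneously in its plain and $\phi\circ V$-modulated forms, interleaving it with the genuinely geometric per-excursion gain $1-\epsilon$ without degrading the orders of the first two terms, and keeping the constants $C_1,C_2,C_3$ (and $C_\delta$) explicit in $\Delta,\ell,\epsilon,\phi,V,b$ throughout; the optimization that yields the $\log(H_\phi^{\inv}(n))$ in the third term, and the verification that the $\pi$-integration is licit with only $\phi\circ V\in L^1(\pi)$, are the two places where care is required.
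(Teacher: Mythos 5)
Your overall architecture for part (i) — couple, derive excursion moments from the bivariate drift via the Douc--Fort--Moulines--Soulier machinery, interleave with the $(1-\epsilon)$ contraction at the visits to $\Delta$, and optimize the number of visits — is the same as the paper's, and the three terms you announce are the right ones. But your derivation of part (ii) is genuinely wrong: (ii) does \emph{not} follow from (i). Take the subexponential case $\phi(t)=t/(1+\log t)^{\kappa}$, $\kappa>0$, so that $H_\phi^{\inv}(n)\asymp\exp\bigl(((1+\kappa)n)^{1/(1+\kappa)}\bigr)$ and $\log H_\phi^{\inv}(n)\asymp((1+\kappa)n)^{1/(1+\kappa)}$. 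The third term of (i) is then of order $1/H_\phi^{\inv}\bigl(c\,n/\log H_\phi^{\inv}(n)\bigr)\asymp\exp\bigl(-c'\,n^{\kappa/(1+\kappa)^2}\bigr)$, whereas the bound in (ii) is of order $\exp\bigl(-\delta((1+\kappa)n)^{1/(1+\kappa)}\bigr)$ up to polynomial factors; since $\kappa/(1+\kappa)^2<1/(1+\kappa)$, the third term of (i) decays much more slowly than the right-hand side of (ii) (the second term of (i) also fails against (ii) once $\delta>2^{-1/(1+\kappa)}$). This is exactly why the paper records in \autoref{tab:comparison} that in the subexponential regime the best rate comes from (ii), not (i). The missing idea is a \emph{second} tail estimate for $T_m$: instead of splitting at $T_0$ and paying $1/R(n/(2m))$, one bounds $\expeMarkovTilde{x,y}{R(T_m)}$ recursively using $R(t+u)\le R(t)+c_{5,r}\,r(t)R(u)$ and $r(t)\le\kappa R(t)$ for $t\ge M_\kappa$, which gives $\probaMarkovTilde{x,y}[T_m\ge n]\le(1+b_1\kappa)^m\{\kappa^{-1}r(M_\kappa)+a_1Q^{\ell-1}\Vl_0(x,y)+a_2\}/R(n)$ (\autoref{lem:surmartingale_contraction}-(ii)); choosing $m=\lceil-\delta\log R(n)/\log(1-\epsilon)\rceil$ and $\kappa$ so that $1+b_1\kappa=(1-\epsilon)^{-(1-\delta)/\delta}$ turns $(1+b_1\kappa)^m$ into $O(R(n)^{1-\delta})$ and yields the $R^{\delta}(n)$ rate of (ii). Without this separate estimate and the $\kappa$-optimization, your proof of (ii) does not go through.

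Two further points, less serious but worth fixing. First, your parenthetical claim that $\probaMarkovTilde{}{ \sigma^{(m)}>n}$ is ``polynomial in $m$ and of order $H_\phi^{\inv}(n)^{-1}$'' is false for general $r_\phi\in\Lambda$ (it holds for polynomial rates only; for subexponential $R$ the product bound gives a factor exponential in $m$). The estimate that actually produces your stated third term is $\probaMarkovTilde{x,y}[T_m-T_0\ge n/2]\le C/R(n/(2m))$, obtained by applying the convexity of $R_0$ to the average of the $m$ excursion lengths and the strong Markov property, as in \autoref{lem:surmartingale_contraction}-(i); note that if your ``$\mathrm{poly}(m)/R(n)$'' bound were true, the optimization would give a strictly better third term than the one in the statement. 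Second, for part (i) your direct coupling with $Y_0\sim\pi$ is fine in spirit (the paper instead couples with $\pi_M$, $\pi$ conditioned to $\{V\le M\}$, and bounds $W_d(\pi_M P^n,\pi P^n)\le2\pi(V>M)\le2M_\phi/\phi(M)$), but to really separate $C_1V(x)/H_\phi^{\inv}(n/2)$ from $C_2/\phi(H_\phi^{\inv}(n/2))$ with only $\pi(\phi\circ V)<\infty$ you must truncate at level $M=H_\phi^{\inv}(n/2)$ and use $V\le M\,\phi(V)/\phi(M)$ on $\{V\le M\}$: a purely ``$\phi\circ V$-modulated'' moment bound (e.g.\ Jensen applied to $\expeMarkovTilde{x,y}{R(T_0)}$) would produce $\phi(V(x))/\phi(H_\phi^{\inv}(n/2))$ rather than the stated first term. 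As written, these steps are asserted rather than carried out, and for (ii) the argument is genuinely incomplete.
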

\begin{proof}
See \autoref{subsec:proof:theo:convergence_log_double_drift_1}
\end{proof}
% The condition \eqref{eq:R_sous_multiplicatif} is
%satisfied for example with $\phi(t) = t^\gamma, \gamma \in
%\ooint{0,1}$; and with $\phi(t) = (1+\log(t))^\alpha, \alpha >0$.  In
%these cases, the rate of convergence given by
%\autoref{theo:convergence_log_double_drift_1}-\eqref{theo:item_rate1} $\phi \circ
%H^{\inv}_{\phi}\parenthese{n/\log(H^{\inv}_{\phi}(n))}$ is equivalent when
%$n \to \infty$ to resp.  $(n/\log(n))^{\tau}$, for $\tau =
%\gamma/(1-\gamma)$; and to $\log(n)^{\alpha}$.  However
%\eqref{eq:R_sous_multiplicatif} is not satisfied when $\phi(t) =
%t/(1+\log(t))^\alpha, \alpha >0$. In this case
%by \autoref{theo:convergence_log_double_drift_1}-\eqref{theo:item_rate2}, the rate $\phi
%\parenthese{H^{\inv}_{\phi} \parenthese{n}^\delta}$ is equivalent when
%$n \to \infty$ to $n^{-\tau \alpha \delta} \exp(\delta
%n^\tau(1+\alpha)^\tau)$ with $\tau = 1/(1+\alpha)$.
We summarize in \autoref{tab:comparison} the rates of convergence obtained (for
a given $x \in E$) from \autoref{theo:convergence_log_double_drift_1} for usual
concave functions $\phi$: logarithmic rates $\phi(t) = (1+\log t)^\kappa$ for
some $\kappa >0$; polynomial rates $\phi(t) = t^\kappa$ for some $\kappa \in
\ooint{0,1}$; subexponential rates $\phi(t) = t/(1+\log t)^\kappa$ for some
$\kappa>0$.  Note that since $\phi \in \mathbb{F}$, the first
  term in the RHS of the bound in \eqref{theo:item_rate1} is not the leading
  term (for fixed $x$, when $n \to \infty$). In the case $\phi$ is logarithmic
  or polynomial, the leading term in the RHS is the second one so that the
  rate of decay is given by $1/\phi(H_\phi^\inv(n/2))$. For the logarithmic
and polynomial cases, the best rates are given by
\autoref{theo:convergence_log_double_drift_1}-\eqref{theo:item_rate1} and for
the subexponential case, by 
\autoref{theo:convergence_log_double_drift_1}-\eqref{theo:item_rate2}.
\begin{table}[!h]
\centering
\begin{tabular}{|c|c|c|c|}
  \hline
   Order of the rates  & $\phi(x) = (1+\log(x))^\kappa$
   & $\phi(x) = x^\kappa$
 & $\phi(x) = x/(1+\log(x))^\kappa$
\\

 of convergence in & for $\kappa > 0$
 & for $\kappa \in \ooint{0,1}$
  &  for $\kappa > 0$
   \\

&
&
& \\

 &  & set $\varsigma = \kappa/(1-\kappa)$
& set $\varsigma = 1/(1+\kappa)$
 \\
\hline
\autoref{theo:convergence_log_double_drift_1}
    &  $1/\log ^{\kappa} (n) $
    & $1/n^{\varsigma}$
      & $ \ \exp(-\delta((1+\kappa)n)^{\varsigma})$ \\
&&& for all $\delta \in \ooint{0,1}$
    \\
\hline
   \cite{douc:fort:moulines:soulier:2004}  & $ 1 / \log^{\kappa} (n)$
   & $1/n^{\varsigma}$
    & $n^{\kappa \varsigma} \exp(-((1+\kappa)n)^{\varsigma})$
    \\
\hline
\cite{butkovsky:2012} for all
$\delta \in \ooint{0,1}$
    & $ 1 / \log^{ \delta  \kappa} (n)$
   & $1/n^{\delta \varsigma}$
   &
   $\exists C > 0$
 \\
& & & $ \exp(-C n^{\varsigma})$   \\
\hline
\end{tabular}
\caption{\label{tab:comparison} Rates of convergence when $\phi$ increases at a logarithmic rate, a polynomial rate and a subexponential rate, obtained from \autoref{theo:convergence_log_double_drift_1} and from \cite[Theorem 2.1]{butkovsky:2012} and \cite[Section 2.3.]{douc:fort:moulines:soulier:2004}.}
\end{table}

In practice, it is often easier to establish a drift
inequality on $E$ rather than on $E \times E$ as in
\autoref{hyp:drift_double}($\Delta,\phi,V$). \autoref{prop:H1toDriftDouble}
relates the following single drift condition to the drift
\autoref{hyp:drift_double}. For a function $\phi \in\mathbb{F}$, a measurable function $V: E \rightarrow \coint{1, +\infty}$ and a
  constant $b \geq 0$, consider the following assumption
\begin{assumption}[$\phi,V,b$]
\label{hyp:drift_simple}
 $\phi(0) = 0$ and  for all $x \in E$,
\begin{equation}
\label{eq:drift}
PV(x)  \leq V(x) -\phi \circ V(x) + b \eqsp.
\end{equation}
\end{assumption}
\begin{theorem}
\label{prop:H1toDriftDouble}
Let $\phi \in\mathbb{F}$, a measurable function $V: E \rightarrow \coint{1, +\infty}$ and a
  constant $b \geq 0$ such that  \autoref{hyp:drift_simple}($\phi,V,b$) holds.  Then
\autoref{hyp:drift_double}($\{V \leq \upsilon\}^2, c \phi, V$) is satisfied for
any $\upsilon > \phi^\inv(2b)$ and with $c = 1 - 2b / \phi(\upsilon)$. 
\end{theorem}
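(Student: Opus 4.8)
The plan is to verify directly the two items of \autoref{hyp:drift_double}($\{V\le\upsilon\}^2,c\phi,V$); write $\Delta=\{V\le\upsilon\}^2$ throughout. Item (ii) is immediate, since on $\Delta$ one has $V(x)+V(y)\le 2\upsilon<\infty$. It is also worth observing at the outset that $c\phi$ is again an element of $\mathbb{F}$, so that the assumption is well posed: the constant $c=1-2b/\phi(\upsilon)$ satisfies $0<c\le 1$ — the strict positivity is precisely the content of the hypothesis $\upsilon>\phi^\inv(2b)$ (recall $\phi$ is increasing with $\phi(0)=0$, so $\phi^\inv$ is defined on $\R_+$), while $c\le 1$ follows from $b\ge 0$ — and multiplication by a constant in $\ooint{0,1}$ preserves concavity, monotonicity, continuous differentiability on $\coint{1,\infty}$, the limits $\lim\phi=+\infty$ and $\lim\phi'=0$, as well as the normalization $(c\phi)(0)=0$.

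For item (i), I would first add the single drift inequality \eqref{eq:drift} at the points $x$ and $y$, obtaining
\[
PV(x)+PV(y)\le V(x)+V(y)-\bigl(\phi(V(x))+\phi(V(y))\bigr)+2b\eqsp.
\]
The next ingredient is the subadditivity of $\phi$ on $\R_+$: since $\phi$ is concave with $\phi(0)=0$, for $s,t\ge 0$ with $s+t>0$ one has $\phi(s)\ge \tfrac{s}{s+t}\phi(s+t)$ and $\phi(t)\ge\tfrac{t}{s+t}\phi(s+t)$ by concavity (applied to the convex combination $s=\tfrac{s}{s+t}(s+t)+\tfrac{t}{s+t}\cdot 0$), hence $\phi(s)+\phi(t)\ge\phi(s+t)$. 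Applying this with $s=V(x)$ and $t=V(y)$ gives
\[
PV(x)+PV(y)\le V(x)+V(y)-\phi(V(x)+V(y))+2b\eqsp.
\]

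It then remains to trade part of $\phi$ against the additive constant $2b$ off the set $\Delta$. If $(x,y)\in\Delta$, then since $0<c\le 1$ and $\phi\ge 0$ we have $\phi\ge c\phi$, so the last display already gives the desired inequality with correction term $2b\,\1_\Delta(x,y)$. If $(x,y)\notin\Delta$, then $V(x)>\upsilon$ or $V(y)>\upsilon$, hence $V(x)+V(y)>\upsilon$ (using $V\ge 1$), and monotonicity of $\phi$ yields $\phi(V(x)+V(y))\ge\phi(\upsilon)$; therefore
\[
2b=\frac{2b}{\phi(\upsilon)}\,\phi(\upsilon)\le\frac{2b}{\phi(\upsilon)}\,\phi(V(x)+V(y))=(1-c)\,\phi(V(x)+V(y))\eqsp,
\]
so that $-\phi(V(x)+V(y))+2b\le -c\,\phi(V(x)+V(y))$, which is exactly item (i) on $\Delta^{c}$ (with no correction term). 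Combining the two cases establishes \eqref{eq:drift_noyau_R2} for the function $c\phi$ and the set $\Delta$ with constant $2b$, which completes the proof.

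There is essentially no hard analytic step here: the whole argument reduces to the subadditivity of concave functions through the origin plus the elementary monotonicity bound $\phi(V(x)+V(y))\ge\phi(\upsilon)$ off $\Delta$. The only point demanding a little attention is ensuring $c>0$ so that $c\phi$ is a legitimate element of $\mathbb{F}$, and this is precisely why the threshold $\phi^\inv(2b)$ enters the statement.
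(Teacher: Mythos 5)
Your proof is correct and follows essentially the same route as the paper's: add the single drift inequality at $x$ and $y$, invoke sub-additivity of $\phi$ (from concavity and $\phi(0)=0$), and absorb the constant $2b$ into $(1-c)\phi$ outside the level set using $\phi(\upsilon)\le\phi(V(x)+V(y))$ (the paper phrases this with $\phi(V(x))+\phi(V(y))\ge\phi(\upsilon)$, a purely cosmetic difference). Your additional check that $c\phi\in\mathbb{F}$ is a harmless extra verification that the paper leaves implicit.
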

The proof is postponed in \autoref{subsec:proof:prop:H1toDriftDouble}. Note
that we can assume without loss of generality that $t \mapsto \phi(t)$ is
concave increasing and continuously differentiable only for large $t$; see
\autoref{lem:prolongement_phi}.

\bigskip

  Our assumptions and results can be compared to \cite{butkovsky:2012} which
  also establish convergence in Wasserstein distance at a subgeometric rate
  under the single drift condition \autoref{hyp:drift_simple}($\phi,V,b$) and
  the following assumptions
\begin{list}{}{}
\item 
  \begin{enumerate}[B-(i)]
  \item \label{hyp:Bi} For all $x,y \in E$, $ W_d(P(x,\cdot),P(y,\cdot)) \leq d(x,y)$.
  \item \label{hyp:Bii} There exists $\eta >0$ such that the level set $\Delta
    = \{(x,y): V(x) + V(y) \leq \phi^{\inv}(2 b ) +\eta \}$ is $d$-small for
    $P$: there exists $\epsilon>0$ such that for any $x,y \in \Delta$,
    $W_d(P(x,\cdot), P(y,\cdot)) \leq (1-\epsilon) d(x,y)$.
  \end{enumerate}
\end{list}
Under these conditions, \cite[Theorem~2.1]{butkovsky:2012} shows the existence
and uniqueness of the stationary distribution $\pi$ and provides rates of
convergence to stationarity in the Wasserstein distance $W_d$; expressions for
these rates are provided in the last row of \autoref{tab:comparison} for
various choices of functions $\phi$.  It can be seen that our results always
improve the rates of convergence when compared to those
of~\cite{butkovsky:2012}

Let us compare the assumptions of \autoref{theo:convergence_log_double_drift_1}
to (B). It follows from \protect{\cite[Corollary 5.22]{VillaniTransport}} that
under B-\eqref{hyp:Bi} and B-\eqref{hyp:Bii}, there exists a coupling kernel
for $P$ (which is the coupling kernel realizing the lower bound in the
Monge-Kantorovitch duality theorem) such that \autoref{hyp:small-set}($\Delta,
1,\epsilon$) holds.  Since \autoref{prop:H1toDriftDouble} establishes that a
single drift condition of the form \autoref{hyp:drift_simple} implies a drift
condition of the form~\autoref{hyp:drift_double}, the assumptions of
\cite[Theorem~2.1]{butkovsky:2012} essentially differ from the assumptions of
\autoref{theo:convergence_log_double_drift_1} through the coupling set
assumption: \cite[Theorem 2.1]{butkovsky:2012} only covers coupling sets of
order $1$ when our result covers coupling sets of order $\ell$, for any $\ell
\geq 1$. This is an unnatural and sometimes annoying restriction since in
practical examples the order $\ell$ is most likely to be large (see e.g. the
examples in \autoref{sec:application}).  Note that the strategy consisting in
applying a result for a coupling set of order $1$ to the $\ell$-iterated kernel
is not equivalent to applying a result for a coupling set of order $\ell$ to
the one iterated kernel; we provide an illustration of this claim in
\autoref{subsec:SRWM-algorithm}. Checking
\autoref{hyp:small-set}($\Delta,\ell,\epsilon$) is easier than checking (B)
since allowing the coupling set to be of any order provides far more
flexibility.

Our results can also be compared to the explicit rates
in~\cite{douc:fort:moulines:soulier:2004} derived for convergence in total
variation distance.  In \cite{douc:fort:moulines:soulier:2004}, it is assumed
that $P$ is phi-irreducible, aperiodic, that the drift condition
\autoref{hyp:drift_simple} holds and that the level sets $\{V \leq \upsilon \}$
are small in the usual sense, \ie\ for some $\ell \in \N^*$, $\epsilon \in
\ooint{0,1}$ and a probability $\nu$ that may depend upon the level set,
$P^\ell(x,A) \geq \epsilon \nu(A)$, for all $x \in \{ V \leq \upsilon \}$ and
$A \in \B(E)$.  Under these assumptions, \cite[Proposition
2.5]{douc:fort:moulines:soulier:2004} shows that for any $x \in E$, $\lim_{n
  \to \infty} \phi(H_\phi^{\inv}(n)) \, W_{d_0}(P^n(x,\cdot),\pi) = 0$, where
$W_{d_0}$ is the total variation distance.  \autoref{tab:comparison} displays
the rate $r_\phi$ obtained in \cite{douc:fort:moulines:soulier:2004} (see
penultimate row) and the rates given by
\autoref{theo:convergence_log_double_drift_1} (row 2): our results coincide
with \cite{douc:fort:moulines:soulier:2004} for the polynomial and logarithmic
cases and the logarithm of the rate differs by a constant (which can be chosen
arbitrarily close to one in our case) in the subexponential case.
Nevertheless, we would like to stress that our conditions do not require
$\phi$-irreducibility and therefore apply in more general contexts.

%%% Local Variables:
%%% mode: latex
%%% TeX-master: "main"
%%% End:

\section{Application}
\label{sec:application}
\subsection{A symmetric random walk Metropolis algorithm}
\label{subsec:SRWM-algorithm}
Let  $E \eqdef \{ k/4 ; k \in \zset \}$ endowed with the trivial distance $d_0$, thus $(E,d_0)$ is a Polish space.
 Consider a symmetric random walk Metropolis (SRWM) algorithm on $E$ for an  heavy tailed target distribution $\pi$  given by
  \begin{equation}
\label{eq:target_form_tails}
\pi(x) \propto 1/ (1+\abs{x})^{1+h} \eqsp, \quad \text{for all $x \in E$} \eqsp,
\end{equation}
where $h \in \ooint{0,1/2}$.  Starting at $x \in E$, the Metropolis algorithm
proposes at each iteration, a candidate $y$ from a random walk with a symmetric
increment distribution $q$ on $E$.  The move is accepted with probability $
\alpha(x,y) = 1 \wedge (\pi(y)/\pi(x))$. The Markov kernel associated with the SRWM
algorithm is given, for all $x \in E$ and $A \subset E $, by
\begin{equation*}
P(x, A ) = \sum_{y, x+y \in A} \alpha(x, x+y) q (y)  + \delta_x(A) \sum_{y \in E} (1-\alpha(x,x+y))q (y) \eqsp.
\end{equation*}
Assume that $q$ is the uniform distribution on $\{-1/4, 0 , 1/4 \}$. It is
easily checked that $P$ is irreducible and aperiodic.  In the following, we
prove that \cite[Theorem 2.1]{butkovsky:2012} cannot be applied to this case,
contrary to \autoref{theo:convergence_log_double_drift_1}.

We first prove that $P$ cannot be geometrically ergodic.  The proof essentially
follows from \cite[Theorem 2.2]{jarner:tweedie:2003}, where the authors
established necessary and sufficient conditions for the geometric and the
polynomial ergodicity of random walk type Markov chains on $\rset$.  %% We recall
%% the terminology from \cite{bible}:~$\mathcal{C} \subset E$ is a $m$-small set
%% (where $m \in \nset^*$) if there exist $\epsilon >0$ and a probability measure
%% $\nu$ on $(E,\B(E))$ satisfying $\inf_{x \in \mathcal{C}} P^m(x,\cdot) \geq
%% \epsilon \nu(\cdot)$. This minorization condition implies that $\inf_{(x,y) \in
%%   \mathcal{C}^2} W_{d_0}(P^m(x,\cdot),P^{m}(y,\cdot)) \leq 1-\epsilon$. Note
%% that $W_{d_0}(P^m(x,\cdot),P^{m}(y,\cdot))= 1$ if $|x-y| > m/4$. Hence, the
%% diameter of an $m$-small set is at most $m/4$, and therefore every $m$-small
%% set is bounded. In addition, for $x \in E \setminus \{0 \}$, $\{y \in E ; -x
%% \leq y \leq x \}$ is a $m$-small set if and only if $m \geq 4 x$.

\begin{proposition}
  $P$ is not geometrically ergodic.
\end{proposition}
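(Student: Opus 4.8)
The plan is to derive a contradiction from the necessary conditions for geometric ergodicity of random-walk-type chains, as in \cite[Theorem~2.2]{jarner:tweedie:2003}. We have already observed that $P$ is $\phi$-irreducible (with respect to the counting measure on the countable set $E$) and aperiodic, so the only extra structural feature to record is that the canonical chain $\{\Phi_n\}$ with kernel $P$ has uniformly bounded increments: since $q$ is supported on $\{-1/4,0,1/4\}$, from any $x\in E$ the next state lies in $\{x-1/4,x,x+1/4\}$, whence $\abs{\Phi_{n+1}-\Phi_n}\le 1/4$ almost surely. This places $P$ in the class covered by \cite[Theorem~2.2]{jarner:tweedie:2003}, which entails that a geometrically ergodic chain of this type must have an invariant measure with a finite exponential moment, i.e. $\sum_{x\in E}\rme^{s\abs{x}}\pi(x)<\infty$ for some $s>0$. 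Since $\pi(x)\propto(1+\abs{x})^{-(1+h)}$ has polynomial tails, $\sum_{x\in E}\rme^{s\abs{x}}\pi(x)=+\infty$ for every $s>0$, and the proposition follows.

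I would also spell out the short mechanism behind this necessary condition in the present lattice setting, since it is elementary. Suppose $P$ were geometrically ergodic. By the drift characterization of geometric ergodicity for $\phi$-irreducible aperiodic chains together with Kendall's theorem (\cite[Chapter~15]{bible}), there exist a finite set $C$, a constant $r>1$, and a function $V:E\to\coint{1,\infty}$ with $\pi(V)<\infty$ such that $V(x)\ge\expeMarkov{x}{r^{\sigma_C}}$ for all $x\in E$, where $\sigma_C=\inf\{n\ge0:\Phi_n\in C\}$. Writing $R_C=\max_{y\in C}\abs{y}$, the bound $\abs{\Phi_{n+1}-\Phi_n}\le1/4$ forces $\sigma_C\ge 4(\abs{x}-R_C)$ deterministically for every $x$ with $\abs{x}>R_C$, hence $V(x)\ge r^{4(\abs{x}-R_C)}$ on that set. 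Integrating against $\pi$ then gives
\[
\infty \ >\ \pi(V) \ \ge\ \sum_{x\in E,\ \abs{x}>R_C} r^{4(\abs{x}-R_C)}\,\pi(x)\eqsp,
\]
and the right-hand side diverges because $r>1$ while $\pi$ decays only polynomially; this is the desired contradiction.

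The routine ingredients are the irreducibility and aperiodicity of $P$ (already granted), the deterministic lower bound on $\sigma_C$ coming from the bounded increments, and the divergence of the exponential sum. The step that deserves the most care — and that is precisely the content of \cite[Theorem~2.2]{jarner:tweedie:2003} — is extracting from geometric ergodicity a drift function $V$ with $\pi(V)<\infty$ that dominates $x\mapsto\expeMarkov{x}{r^{\sigma_C}}$: this follows from a geometric drift inequality $PV\le\lambda V+b\1_C$ via $\pi(V)=\pi(PV)\le\lambda\pi(V)+b\pi(C)$, which gives $\pi(V)\le b\pi(C)/(1-\lambda)<\infty$, but one must first secure the a priori finiteness of $\pi(V)$ by a standard truncation argument. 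In the paper it therefore suffices to check the (easy) hypotheses of \cite[Theorem~2.2]{jarner:tweedie:2003} and quote it.
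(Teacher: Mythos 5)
Your proposal is correct and follows essentially the same route as the paper: both argue by contradiction that geometric ergodicity, combined with the increments being bounded by $1/4$ (so the hitting time of $\{|x|\le M\}$ is at least $4(|x|-M)$), forces $\sum_x L^{|x|}\pi(x)<\infty$ for some $L>1$, contradicting the polynomial tails of $\pi$ — exactly the Jarner--Tweedie mechanism the paper cites. The only cosmetic difference is that the paper obtains the needed moment bound by invoking geometric regularity of $\pi$ (i.e.\ $\E_\pi[L^{\tau_A}]<\infty$ for any $A$ with $\pi(A)>0$, applied to the bounded set $A=\{|x|\le M\}$), which sidesteps your unjustified assertion that the petite set $C$ in the drift characterization can be taken finite; since you also reduce to quoting \cite[Theorem~2.2]{jarner:tweedie:2003} directly, this does not affect the validity of the proof.
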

\begin{proof}
  The proof is by contradiction: we assume that $P$ is geometrically ergodic.
  Since it is also $P$ irreducible and aperiodic, the stationary distribution
  $\pi$ is unique and geometrically regular: for any set $A$ such that
  $\pi(A)>0$, there exists $L > 1$ such that $\E_\pi[L^{\tau_A}] = \sum_{x \in
    E} \pi(x) \E_x[L^{\tau_A}] < \infty$, where $\tau_A$ is the return time to
  $A$.  Choose $M > 0$, $A = \{ x \in E, |x| \leq M \}$. Since for $|x| \geq
  M$, $\tau_A \geq 4(|x|-M)$ $\mathbb{P}_x$-a.s. the regularity of $\pi$ claims
  that there exists $L >1$ such that $\sum_{x \in \zset} L^{|x|} \pi(x) <
  \infty$. This clearly yields to a contradiction.
\end{proof}
We then show that the Markov kernel $P$ satisfies a sub-geometric drift condition.
For $s \geq 0$, set $V_s(x) = 1 \vee \abs{x}^s$.
\begin{proposition} \label{prop:toyexample:drift}
For all $s \in \ooint{2,2+h}$, there exist $b,c >0$ such that for all $x \in E$
\begin{equation}\label{eq:drift:AR}
PV_s(x) \leq V_s(x) -c V_s(x)^{(s-2)/s} +b \eqsp.
\end{equation}
\end{proposition}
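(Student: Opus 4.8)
The strategy is a direct computation of $PV_s(x)$ using the explicit form of the SRWM kernel with the uniform proposal on $\{-1/4,0,1/4\}$, and then a Taylor expansion in the large-$|x|$ regime to extract the drift term $-cV_s(x)^{(s-2)/s}$. First I would reduce to large $x>0$ by symmetry (the chain and $\pi$ are symmetric under $x\mapsto -x$, and on a bounded set the inequality holds trivially by adjusting $b$). For $x>0$ large, $\pi$ is decreasing, so the move $x\mapsto x+1/4$ is a ``downhill in probability'' step accepted with probability $\alpha(x,x+1/4)=\pi(x+1/4)/\pi(x)=\bigl(1+x\bigr)^{1+h}/\bigl(1+x+1/4\bigr)^{1+h}<1$, while the move $x\mapsto x-1/4$ is accepted with probability $1$. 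Writing $q=1/3$ for each of the three proposals, I get
\begin{equation*}
PV_s(x) = \tfrac13\alpha_+ \,V_s(x+\tfrac14) + \tfrac13\,V_s(x-\tfrac14) + \tfrac13 V_s(x) + \tfrac13(1-\alpha_+)V_s(x),
\end{equation*}
where $\alpha_+=\alpha(x,x+\tfrac14)$, so that
\begin{equation*}
PV_s(x) - V_s(x) = \tfrac13\Bigl[\alpha_+\bigl(V_s(x+\tfrac14)-V_s(x)\bigr) + \bigl(V_s(x-\tfrac14)-V_s(x)\bigr)\Bigr].
\end{equation*}

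**Key expansions.** For $x$ large, $V_s(x)=x^s$, and Taylor expansion gives $V_s(x\pm\tfrac14)-V_s(x) = \pm\tfrac{s}{4}x^{s-1} + \tfrac{s(s-1)}{32}x^{s-2} + O(x^{s-3})$. Similarly $\alpha_+ = \bigl(1+\tfrac{1/4}{1+x}\bigr)^{-(1+h)} = 1 - \tfrac{1+h}{4}x^{-1} + O(x^{-2})$. Substituting, the $x^{s-1}$ terms combine as $\tfrac{s}{4}x^{s-1}(\alpha_+ - 1) = -\tfrac{s(1+h)}{16}x^{s-2} + O(x^{s-3})$, and the two second-order terms add to $\tfrac{s(s-1)}{32}x^{s-2}(\alpha_+ + 1) = \tfrac{s(s-1)}{16}x^{s-2} + O(x^{s-3})$. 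Hence
\begin{equation*}
PV_s(x) - V_s(x) = \tfrac13\Bigl[\tfrac{s(s-1)}{16} - \tfrac{s(1+h)}{16}\Bigr]x^{s-2} + O(x^{s-3}) = \tfrac{s(s-2-h)}{48}\,x^{s-2} + O(x^{s-3}).
\end{equation*}
Since $s\in\ooint{2,2+h}$, the coefficient $s-2-h$ is \emph{negative}, so there is $c_0>0$ with $PV_s(x)-V_s(x) \le -c_0 x^{s-2}$ for all $x$ large enough; and $x^{s-2} = V_s(x)^{(s-2)/s}$ in that range.

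**Wrapping up and the main obstacle.** On the finite set $\{x: |x|\le M\}$ (for $M$ chosen large enough that the asymptotic estimate holds outside it), $PV_s - V_s$ is bounded, and $V_s(x)^{(s-2)/s}$ is bounded, so by enlarging the additive constant $b$ the inequality $PV_s(x)\le V_s(x) - c V_s(x)^{(s-2)/s} + b$ holds globally with any $c\le c_0$. The one genuinely delicate point is bookkeeping the error terms: one must verify that the $O(x^{s-3})$ remainders (coming both from the Taylor expansion of $V_s$ and from the expansion of $\alpha_+$, including the cross terms $\tfrac{s}{4}x^{s-1}\cdot O(x^{-2})$) are indeed of strictly smaller order than the surviving $x^{s-2}$ term, so that they can be absorbed. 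Because $s>2$ the exponent $s-3 < s-2$, so this is clean, but one should also note the borderline case $s\le 3$ where $x^{s-3}$ need not vanish — it is still $o(x^{s-2})$, which is all that is needed. No subtlety arises from the lattice structure: the steps are exactly $\pm1/4$ and $0$, so no rounding is involved, and the computation is exact up to the analytic expansions. Thus the drift inequality \eqref{eq:drift:AR} follows.
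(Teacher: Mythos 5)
Your proposal is correct and follows essentially the same route as the paper: a direct evaluation of $PV_s(x)-V_s(x)$ for large positive $x$ using the explicit Metropolis kernel, a second-order Taylor expansion of $V_s(x\pm 1/4)$ and of the acceptance ratio, yielding the same leading term $s(s-2-h)x^{s-2}/48<0$, with the negative-$x$ case handled by symmetry and the bounded region absorbed into $b$. Your write-up merely makes explicit the bookkeeping of remainders and the finite-set adjustment that the paper leaves implicit.
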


\begin{proof}
We have for all $x \geq 5/4$,
\begin{equation*}
PV_s(x) - V_s(x) = (x^{s}/3) \left( ((1-(4x)^{-1})^s-1) -(1-1/(5+4x))^{1+h}(1-(1+(4x)^{-1})^{s}) \right) \eqsp.
%&\underset{x \to \plusinfty}{=} x^{s-2}s(s-h-2)/48 + o(x^{s-2}) \eqsp.
\end{equation*}
Since $(1-(4x)^{-1})^s-1 = -s/(4x)-(1-s)s/(32x^2) + o(x^{-2})$ and $(1-1/(5+4x))^{1+h} = 1-(1+h)/(4x) + (10+11 h +h^2)/(32x^2) +  o(x^{-2})$ as $x \to \plusinfty$, then $PV_s(x) - V_s(x) = x^{s-2}s(s-h-2)/48 + o(x^{s-2})$.
The same expansion remains valid as $x \to - \infty$ upon replacing $x$ by $-x$.
% We have for all $x \geq 5/4$,
% \begin{align*}
% PV_s(x) - V_s(x) &= (x^{s}/3) \left( ((1-(4x)^{-1})^s-1) -(1-1/(5+4x))^{1+h}(1-(1+(4x)^{-1})^{s}) \right) \\
% &\underset{x \to \plusinfty}{=} x^{s-2}s(s-h-2)/48 + o(x^{s-2}) \eqsp.
% \end{align*}
% The same expansion remains valid as $x \to - \infty$ upon replacing $x$ by $-x$.
\end{proof}
Using this result, \cite[Proposition 2.5]{douc:fort:moulines:soulier:2004}
shows that for any $x \in E$, $P^n(x,\cdot)$ converges to $\pi$ in total
variation norm, at the rates $n^{\tilde{h}}$ for all $\tilde{h} \in
\ooint{0,h/2}$.

We can also apply \autoref{prop:H1toDriftDouble} and
\autoref{theo:convergence_log_double_drift_1}-\eqref{theo:item_rate1}. For any
$s \in (2,2+h)$,  \autoref{hyp:drift_simple}($\phi_s,V_s,b$) is satisfied with
$\phi_s(x)= c x^{(s-2)/s}$, $V_s(x)= 1 \vee |x|^s$ and $b < \plusinfty$.  For
$x,y \in E$ and $A,B \subset E$, consider the following kernel:
\[
Q((x,y),(A \times B )) = P(x,A  ) P(y, B) \1_{\{x \not = y\}} + P(x,A \cap B ) \1_{\{x=y\}} \eqsp.
\]
Clearly, $Q$ is a coupling kernel for $P$. Let us prove that for any $M>0$ and
any $\ell \geq 4M$, there exists $\epsilon>0$ such that
\autoref{hyp:small-set}$(\Delta, \ell, \epsilon)$ holds with $\Delta = \{|x|
\vee |y| \leq M \}$.  We have $Q d_0(x,y) \leq d_0(x,y)$ for every $x \ne y \in
E$ and by definition of $Q$, $Q d_0(x,x)= 0$ for every $x \in E$.  Let $M>0$,
$\ell \geq 4M$. For any $x,y \in \{|x| \vee |y| \leq M \}$ such that $|x|< |y|$
\begin{equation*}
\probaMarkovTilde{x,y}[X_{ \ell}= Y_{ \ell}] \geq \probaMarkovTilde{x,y}[X_{4 |y|} = Y_{4 |y|}]
\geq \probaMarkovTilde{x,y}[\tau_0^X=4|x|,X_{4|x|+1}=0,\dots,X_{4|y|}=0,\tau_0^Y=4|y|] \eqsp,
\end{equation*}
where $\tau_0^X = \inf \{ n \geq 1, X_n=0 \}$ and $\tau_0^Y= \inf \{ n \geq 1, Y_n=0\}$. Since \[
\text{$\probaMarkovTilde{x,y}[\tau_0^X=4|x|] \geq (1/3)^{4|x|}$, $\probaMarkovTilde{x,y}[\tau_0^Y=4|y|] \geq (1/3)^{4|y|}$,
$\probaMarkovTilde{x,y}[X_{4|x|+1}=0,\dots,X_{4|y|}=0] \geq (1/3)^{4(|y|-|x|)}$,}
\]
it follows that $Q^\ell d_0(x,y) = 1 - \probaMarkovTilde{x,y}[X_{ \ell}=
Y_{\ell}] \leq 1- (1/3)^{8|y|} \leq 1-(1/3)^{8M} d_0(x,y)$. This inequality
remains valid when $x=y$. This concludes the proof of
\autoref{hyp:small-set}$(\Delta, \ell, \epsilon)$. By
\autoref{prop:H1toDriftDouble}, the kernel $P$ is subgeometrically ergodic in
total variation distance at the rates $n^{\tilde{h}}$, for $\tilde{h} \in
\ooint{0,h/2}$.

In this example, \cite[Theorem 2.1]{butkovsky:2012} cannot be applied. Indeed,
on one hand, for any $M>0$ the set $\Delta_M = \{|x| \vee |y| \leq M \}$ is a
$(1, \epsilon,d_0)$-coupling set for $P^\ell$ iff $l \geq 4M$. This property is
a consequence of the above discussion (for the converse implication) and of the
equality $W_{d_0} (P^\ell(x,\cdot),P^{\ell}(y,\cdot))= 1$ if $|x-y| > \ell/2$
(for the direct implication). On the other hand, in order to check
B-\eqref{hyp:Bii} for some $\ell$-iterated kernel $P^\ell$, we have to prove
that there exists $\eta >0$ such that $\Delta_\star= \{(x,y) \in E^2 ; V_s(x) +
V_s(y) \leq (2b \ell/c)^{s/(s-2)} + \eta \}$ is a $(1,\epsilon,d_0)$-coupling
set for $P^\ell$ - the constants $b,c$ are given by
\autoref{prop:toyexample:drift}. Unfortunately, since $b/c \geq 1$ (apply the
drift inequality \ref{eq:drift:AR} with $x=0$), and $1/(s-2)\geq 2$, we get
 \[
 \{(x,y) \in E, |x| \vee |y| \leq 4 \ell^2\} \subset \{x,y \in E ; |x| \vee |y|
 \leq (2b\ell/c)^{1/(s-2)} \} \subset \Delta_\star \eqsp,
\]
Therefore whatever $\ell$, $\Delta_\star$ is not a $(1,\epsilon,d_0)$ coupling set
for $P^{\ell}$.

\subsection{Non linear autoregressive model}
In this section, we consider the functional autoregressive process
$\sequence{X}[n][\N]$ on $E=\R^p$, given by $X_{n+1} = g(X_n) + Z_{n+1}$
Denote by $\norm{\cdot}$ the Euclidean norm
on $E$ and $\boule{x}{M}$ the ball of radius $M \geq 0$ and
centered at $x \in \rset^p$, associated with this norm. Consider the following assumptions:
\begin{assumptionAR}
\label{hyp:autoreg_epsilon}
$\sequence{Z}[n][\N^*]$ is an independent and identically distributed (\iid)
zero-mean $\R^p$-valued sequence, independent of $X_0$, and satisfying $\int
\exp\parenthese{\beta_0 \norm{z}^{\kappa_0}} \mu(\dint z) < \plusinfty$, where
$\mu$ is the distribution of $Z_1$ for some $\beta_0 >0$ and $\kappa_0 \in
\ocint{0,1}$.
\end{assumptionAR}
\begin{assumptionAR}
\label{hyp:autoreg_g}
For all $M > 0$, $g: \R^p \rightarrow \R^p$ is  $C_M$-Lipschitz on $\boule{0}{M}$  with respect to $\norm{\cdot}$ where $C_M \in \ooint{0,1}$.
Furthermore, there exist positive constants $r, M_0$, and $\rho \in
\coint{0,2}$, such that $\norm{g(x)} \leq \norm{x}(1-r\norm{x}^{-\rho}) \quad \text{if } \norm{x} \geq M_0$.
\end{assumptionAR}
A simple example of function $g$ satisfying \autoref{hyp:autoreg_g} is $x
\mapsto x\cdot \max \parenthese{1/2, 1-1/\norm{x}^{\rho}}$ with $\rho \in
\coint{0,2}$. Denote by $P$ the Markov kernel defined by the process $(X_n)_n$.
\autoref{theo:drift_autoreg} establishes
\autoref{hyp:drift_simple}($\phi,V,b$) in the case where $\rho > \kappa_0$, and a geometric drift condition in the other case.
\begin{proposition}
\label{theo:drift_autoreg}
\cite[Theorem 3.3]{douc:fort:moulines:soulier:2004}
Assume \autoref{hyp:autoreg_epsilon} and \autoref{hyp:autoreg_g}.
\begin{enumerate}[(i)]
\item
\label{item:theo:drift_autoreg1}
If
$\rho > \kappa_0$,
there exist  $\beta \in \ooint{0, \beta_0}$ and $b,c > 0$ such that \autoref{hyp:drift_simple}($\phi,V,b$) holds with
 $ \phi(x) := c x (1+ \log(x))^{1 - \rho / (
    \kappa_0 \wedge (2 - \rho))}$ and $
  V(x) := \exp(\beta \norm{x}^{\kappa_0 \wedge (2 -
      \rho)})$.
\item
\label{item:theo:drift_autoreg2}
It $\rho \leq \kappa_0$, then there exist $b < \plusinfty$ and $\zeta \in
\ooint{0,1}$ such that for all $x \in \rset^p$, $PV(x) \leq \zeta V(x) + b$
where $V(x) = \exp(\beta \norm{x}^{\kappa_0})$ with $\beta \in
\ooint{0,\beta_0}$.
\end{enumerate}
\end{proposition}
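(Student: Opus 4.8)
The plan is to establish both drift inequalities by a direct estimation of
\[
PV(x)\;=\;\int_{\R^p}\exp\!\bigl(\beta\,\norm{g(x)+z}^{\alpha}\bigr)\,\mu(\dint z)\eqsp,
\]
where $\alpha:=\kappa_0\wedge(2-\rho)$ in the case $\rho>\kappa_0$ and $\alpha:=\kappa_0$ in the case $\rho\le\kappa_0$ (so that $\alpha\in\ocint{0,1}$ in both cases), and where $\beta\in\ooint{0,\beta_0}$ will be fixed small only at the very end. (Since the statement is exactly \cite[Theorem~3.3]{douc:fort:moulines:soulier:2004}, one may also simply invoke it.) First, the values of $x$ in a bounded set are harmless: by \autoref{hyp:autoreg_g}, $g$ is bounded on every ball, so \autoref{hyp:autoreg_epsilon} gives $\sup_{\norm{x}\le M}PV(x)<\infty$, and this will be absorbed into the additive constant $b$. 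So I would concentrate on $\norm{x}\ge M_0$, where \autoref{hyp:autoreg_g} provides the contraction bound $\norm{g(x)}\le\norm{x}(1-r\norm{x}^{-\rho})$; combined with the elementary inequality $(1-u)^{\alpha}\le 1-\alpha u$ this yields
\[
\norm{g(x)}^{\alpha}\;\le\;\norm{x}^{\alpha}-\alpha r\,\norm{x}^{\alpha-\rho}\eqsp,
\]
whose right-hand side pinpoints the ``gain'' of the drift, $\norm{x}^{\alpha-\rho}$. In the case $\rho>\kappa_0$ one has $\alpha\le\kappa_0<\rho$, so this gain tends to $0$ (hence only a subgeometric drift can be expected), whereas in the case $\rho\le\kappa_0$ one has $\rho\le\kappa_0=\alpha$ and the gain stays bounded away from $0$ (geometric drift).

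To bound the integral for $\norm{x}\ge M_0$, I would first split on the size of $g(x)$. If $\norm{g(x)}\le\tfrac12\norm{x}$, then $\norm{g(x)+z}^{\alpha}\le\norm{g(x)}^{\alpha}+\norm{z}^{\alpha}\le 2^{-\alpha}\norm{x}^{\alpha}+\norm{z}^{\alpha}$ by subadditivity, and since $\alpha\le\kappa_0$ makes $z\mapsto\exp(\beta\norm{z}^{\alpha})$ $\mu$-integrable for $\beta<\beta_0$, this forces $PV(x)\le C\exp(2^{-\alpha}\beta\norm{x}^{\alpha})=o\!\bigl(V(x)\,\norm{x}^{\alpha-\rho}\bigr)$, which is more than enough. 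If $\norm{g(x)}>\tfrac12\norm{x}$, I would split the $z$-integral at a threshold $\theta(x)=\norm{x}^{s}$ with a sufficiently small $s>0$ (possible since $\rho<2$). On $\{\norm{z}>\theta(x)\}$ one uses subadditivity together with the tail bound $\mu(\norm{Z}>\theta)\le\exp(-\beta_0\theta^{\kappa_0})\,\expe{\exp(\beta_0\norm{Z}^{\kappa_0})}$ coming from \autoref{hyp:autoreg_epsilon} and a Cauchy--Schwarz step to see that this part contributes $O\!\bigl(V(x)\exp(-c_0\norm{x}^{s\kappa_0})\bigr)=o\!\bigl(V(x)\,\norm{x}^{\alpha-\rho}\bigr)$. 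On $\{\norm{z}\le\theta(x)\}$ one uses the second-order Taylor expansion $\norm{g(x)+z}^{\alpha}\le\norm{g(x)}^{\alpha}+\alpha\norm{g(x)}^{\alpha-2}\ps{g(x)}{z}+C\norm{g(x)}^{\alpha-2}\norm{z}^{2}$, valid because $\norm{g(x)+tz}\ge\norm{g(x)}/2$ along the segment; the choice of $s$ makes the quadratic remainder $\norm{g(x)}^{\alpha-2}\theta(x)^{2}$ negligible, and the centering $\int z\,\mu(\dint z)=0$ together with $\int\norm{z}^{2}\mu(\dint z)<\infty$ kills the linear term in expectation (up to a tail correction of the same type as above), so that this region contributes at most $\exp(\beta\norm{g(x)}^{\alpha})\bigl(1+o(\norm{x}^{\alpha-\rho})\bigr)$.

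Putting the pieces together, for $\norm{x}$ large one obtains
\[
PV(x)\;\le\;V(x)\,\exp\!\bigl(-\tfrac{\alpha\beta r}{2}\,\norm{x}^{\alpha-\rho}\bigr)\;+\;(\text{bounded term})\eqsp.
\]
In the case $\rho\le\kappa_0$ the exponent stays bounded away from $0$, so $PV(x)\le\zeta V(x)$ for $\norm{x}$ large with some $\zeta<1$, and $PV(x)\le\zeta V(x)+b$ globally after absorbing the bounded part into $b$. In the case $\rho>\kappa_0$, since $\norm{x}^{\alpha-\rho}\to 0$ one expands the exponential; taking $V(x)=\exp(\beta\norm{x}^{\alpha})$ one checks that $\phi(V(x))=cV(x)(1+\log V(x))^{1-\rho/\alpha}\asymp V(x)\,\norm{x}^{\alpha-\rho}$, so that the target inequality $PV(x)\le V(x)-\phi(V(x))+b$ holds once $c$ (hence also $\beta$) is small enough. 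One also has to check that this $\phi$ lies in $\mathbb{F}$, at least for large $t$ — which suffices by \autoref{lem:prolongement_phi} — and indeed $1-\rho/\alpha\le 0$ and $t\mapsto t(\log t)^{-\gamma}$ is increasing, concave, with derivative tending to $0$ for any $\gamma>0$ and $t$ large.

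The step I expect to be the main obstacle is the control of the noise when $\kappa_0<1$: then $\norm{Z}$ has no finite exponential moment at a linear rate, so the naive first-order bound $\norm{g(x)+z}^{\alpha}\le\norm{g(x)}^{\alpha}+\alpha\norm{g(x)}^{\alpha-1}\norm{z}$ can neither be exponentiated and integrated globally, nor would its linear term be of the required smaller order unless $\rho<1$; reaching the full range $\rho<2$ forces one to pass to the second-order expansion and genuinely exploit the centering $\int z\,\mu(\dint z)=0$. The delicate bookkeeping is then to choose the exponent $s$ in $\theta(x)=\norm{x}^{s}$ and to track all constants so that every noise-induced term is of strictly smaller order than the gain $\norm{x}^{\alpha-\rho}$, and, in the case $\rho\le\kappa_0$, so that the resulting $\zeta$ is $<1$ uniformly in $x$. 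Here the hypotheses $\rho<2$ and $\int\exp(\beta_0\norm{z}^{\kappa_0})\mu(\dint z)<\infty$ (which in particular gives $\int\norm{z}^{2}\mu(\dint z)<\infty$) are precisely what makes the argument close.
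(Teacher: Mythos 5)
Your overall route is the intended one: the paper itself gives no argument for this proposition, deferring to \cite[Theorem~3.3]{douc:fort:moulines:soulier:2004} with the footnote that the extra assumption there (a nontrivial absolutely continuous noise component) is only used for $\phi$-irreducibility and not for the drift inequality — a caveat you should state if you "simply invoke" that theorem, since its hypotheses as written are not satisfied here. Your direct sketch follows the same lines as that reference: local boundedness for bounded $x$, the contraction $\norm{g(x)}^{\alpha}\leq\norm{x}^{\alpha}-\alpha r\norm{x}^{\alpha-\rho}$, truncation of the noise at $\theta(x)=\norm{x}^{s}$, a second-order expansion exploiting $\int z\,\mu(\dint z)=0$, and a final small choice of $\beta$; the peripheral steps (subadditivity when $\norm{g(x)}\leq\norm{x}/2$, the Cauchy--Schwarz tail estimate, the identification $\phi(V(x))\asymp V(x)\norm{x}^{\alpha-\rho}$ and the extension of $\phi$ to all of $\R_+$ via \autoref{lem:prolongement_phi}) are fine.

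There is, however, one step that fails as written. On $\{\norm{z}\leq\theta(x)\}$ you must bound $\expe{\exp(L(Z)+R(Z))\1_{\{\norm{Z}\leq\theta(x)\}}}$ with $L(z)=\beta\alpha\norm{g(x)}^{\alpha-2}\ps{g(x)}{z}$; centering removes $\expe{L(Z)}$, but exponentiating the linear term leaves the contribution $\tfrac12\expe{L(Z)^2}\asymp\beta^{2}\alpha^{2}\norm{x}^{2(\alpha-1)}\expe{\norm{Z}^{2}}$, which you have silently dropped. Whenever $\kappa_0+\rho\geq 2$ — that is, in case (i) when $\alpha=2-\rho$, precisely the regime the exponent $\kappa_0\wedge(2-\rho)$ is designed for, and at the corner $\rho=\kappa_0=1$ of case (ii) — one has $2(\alpha-1)=\alpha-\rho$, so this term is of the \emph{same} order as the gain $\alpha\beta r\norm{x}^{\alpha-\rho}$ (for Gaussian noise it is exactly $\beta^{2}\alpha^{2}\norm{g(x)}^{2(\alpha-1)}/2$), and your claimed bound $\exp(\beta\norm{g(x)}^{\alpha})\bigl(1+o(\norm{x}^{\alpha-\rho})\bigr)$, as well as the stated goal of making "every noise-induced term of strictly smaller order than the gain", is false for fixed $\beta$. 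The drift still closes, but only through the coefficient comparison $O(\beta^{2})$ against $O(\beta)$, i.e.\ by the final smallness of $\beta$ — a degree of freedom you reserve but attach to a different step (calibrating $c$ in $\phi$), never to this one. Relatedly, when $\alpha=1$ (case (ii) with $\kappa_0=1$) the linear exponent is of size $\beta\theta(x)\to\infty$ on the truncation region, so the "expand the exponential" device needs the exponential moment $\int\exp(\beta\norm{z})\,\mu(\dint z)<\infty$ available for $\kappa_0=1$, not merely centering plus a finite second moment. Both repairs live inside your own framework, but without them the bookkeeping in the most interesting parameter range does not go through.
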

\begin{proof}
The proof of \autoref{theo:drift_autoreg} is along the same lines as
\cite[Theorem 3.3]{douc:fort:moulines:soulier:2004} and is omitted \footnote{   We point out that in \cite{douc:fort:moulines:soulier:2004}, it is additionally required that the distribution of $Z_1$ has a nontrivial absolutely continuous
component which is bounded away from zero in a neighborhood of the origin. However, this condition is only required to establish the $\phi$-irreducibility of the Markov chain, which is not needed here.}
\end{proof}
Consider the coupling kernel $Q$ defined for all $x,y \in E$ and $A \in \B(E \times E)$ by
\begin{equation}
\label{eq:def_Q_auto_reg}
Q((x,y) , A) =  \int \1_A(g(x) + z,g(y) + z) \mu(\dint z) \eqsp.
\end{equation}
For $\eta > 0$, define $d_\eta(x,y) \eqdef 1 \wedge \eta^{-1} \norm{x-y}$.
\begin{proposition}
\label{lem:autoregressif_d_small}
Assume \autoref{hyp:autoreg_epsilon} and \autoref{hyp:autoreg_g}. For any $M>0$,
there exist $\epsilon,\eta >0$  such that $ \boule{0}{M} \times \boule{0}{M}$
is a $(1,\epsilon,d_\eta)$-coupling set.
\end{proposition}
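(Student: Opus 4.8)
The plan is to show that on the ball $\boule{0}{M} \times \boule{0}{M}$ the coupling kernel $Q$ defined in~\eqref{eq:def_Q_auto_reg} satisfies a one-step contraction in the metric $d_\eta$, for $\eta$ chosen small enough (depending on $M$). First I would record the elementary but essential fact that because $Q$ moves $(x,y)$ to $(g(x)+z, g(y)+z)$ with the \emph{same} noise increment $z$, the coupled chains differ by $g(x)-g(y)$ after one step, a quantity that is deterministic given $(x,y)$: for all $(x,y)$,
\[
Q d_\eta(x,y) = 1 \wedge \bigl( \eta^{-1} \norm{g(x)-g(y)} \bigr) \eqsp.
\]
By \autoref{hyp:autoreg_g}, $g$ is $C_M$-Lipschitz on $\boule{0}{M}$ with $C_M \in \ooint{0,1}$, so for $(x,y) \in \boule{0}{M}^2$ one has $\norm{g(x)-g(y)} \leq C_M \norm{x-y}$, and hence $Q d_\eta(x,y) \leq 1 \wedge (C_M \eta^{-1}\norm{x-y}) \leq C_M\, d_\eta(x,y)$, since $a \wedge (C_M t) \leq C_M (a \wedge t)$ for $C_M \leq 1$ and $a,t \geq 0$ (using $a=1$). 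This gives the contraction factor $\epsilon = 1 - C_M$, which is strictly positive, and at this level the choice of $\eta$ is immaterial.

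The role of $\eta$ enters only if one additionally wants the global weak-contraction property \autoref{hyp:small-set}($\Delta,1,\epsilon$)-\eqref{item:contract_coupling_2}, i.e.\ $Q d_\eta(x,y) \leq d_\eta(x,y)$ for \emph{all} $x,y \in E$, not just on $\Delta$; but the statement of the proposition only asserts the coupling-set property, so I would simply note that on the ball the displayed inequality already yields the claim with $\epsilon = 1-C_M$ and any $\eta>0$. If the paper's convention bundles the weak-contraction requirement into being a ``coupling set'', then outside $\boule{0}{M}^2$ I would split into the region $\norm{x}\vee\norm{y}\geq M_0$, where the drift bound $\norm{g(x)}\leq \norm{x}(1-r\norm{x}^{-\rho})$ together with the Lipschitz-on-balls property can be used to control $\norm{g(x)-g(y)}$ against $\norm{x-y}$ up to an additive constant, and then choose $\eta$ large enough that the truncation by $1$ absorbs that additive slack — precisely, pick $\eta$ exceeding the diameter-type bound $\sup\{\norm{g(x)-g(y)} : \norm{x}\vee\norm{y}\leq M_0'\}$ for a suitable intermediate radius $M_0'$, so that $d_\eta \equiv 1$ there and the inequality is trivial.

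The main obstacle is the bookkeeping in this last region: $g$ is only assumed Lipschitz on each ball with a constant $C_M$ that may grow (towards $1$ from below, or be allowed to approach $1$) as $M \to \infty$, so one cannot get a single global Lipschitz constant, and one must instead patch the near-field (bounded by a large-enough $\eta$ so that $d_\eta=1$) with the far-field (controlled by the contractive-at-infinity drift on $\norm{g(x)}$). The cleanest route is to verify that for the claimed coupling-set conclusion on $\boule{0}{M}^2$ alone, none of this is needed — the one-line computation above suffices — and to handle the global $Q d_\eta \leq d_\eta$ separately only if invoked elsewhere. I therefore expect the actual proof to be short: establish $Q d_\eta(x,y) = 1 \wedge (\eta^{-1}\norm{g(x)-g(y)})$, apply \autoref{hyp:autoreg_g} on the ball, and conclude with $\epsilon = 1 - \sup_{M} C_M$ restricted appropriately, i.e.\ $\epsilon = 1 - C_M$ for the given $M$.
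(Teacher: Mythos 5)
Your central step contains a genuine error. The elementary inequality you invoke, $a \wedge (C_M t) \leq C_M (a \wedge t)$ with $a=1$, is false whenever $t>1$: for $t>1$ the right-hand side is $C_M$ while the left-hand side is $1 \wedge (C_M t) \geq C_M t > C_M$ (take $C_M = 1/2$, $t = 3/2$: $3/4 \not\leq 1/2$). Consequently your conclusion that ``the choice of $\eta$ is immaterial'' is wrong. If $\eta$ is small compared to $2M$, then there are pairs $x,y \in \boule{0}{M}$ with $\norm{x-y} > \eta$, hence $d_\eta(x,y) = 1$, while $Q d_\eta(x,y) = 1 \wedge \eta^{-1}\norm{g(x)-g(y)}$ can also equal $1$ (e.g.\ for $g(x) = x \max(1/2,1-1/\norm{x}^\rho)$ and $x=-y$ of norm $M/2$ with $M$ large, $\norm{g(x)-g(y)}$ is of order $M \gg \eta$), so no $\epsilon>0$ gives the contraction on $\boule{0}{M}\times\boule{0}{M}$. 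The role of $\eta$ is exactly to rule this out: the paper takes $\eta = 2M$, so that $\norm{x-y} \leq \eta$ on the ball and $d_\eta(x,y) = \eta^{-1}\norm{x-y}$ \emph{without truncation}; then $Q d_\eta(x,y) = 1 \wedge \eta^{-1}\norm{g(x)-g(y)} \leq C_M \eta^{-1}\norm{x-y} = C_M\, d_\eta(x,y)$ and $\epsilon = 1-C_M$ works. Your identity $Q d_\eta(x,y) = 1 \wedge \eta^{-1}\norm{g(x)-g(y)}$ and the use of the local Lipschitz constant are fine; the missing idea is this specific calibration of $\eta$ to $M$.

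A secondary remark: the ``main obstacle'' you describe for the global weak-contraction $Q d_\eta \leq d_\eta$ is a non-issue. Since \autoref{hyp:autoreg_g} gives $C_M \in \ooint{0,1}$ for \emph{every} $M$, for arbitrary $x,y$ one may apply the Lipschitz bound on the ball of radius $\norm{x}\vee\norm{y}$ to get $\norm{g(x)-g(y)} \leq \norm{x-y}$, i.e.\ $g$ is globally $1$-Lipschitz; hence $Q d_\eta(x,y) = 1 \wedge \eta^{-1}\norm{g(x)-g(y)} \leq 1 \wedge \eta^{-1}\norm{x-y} = d_\eta(x,y)$ for all $x,y$, with no far-field/near-field patching and no drift argument needed. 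This is exactly how the paper disposes of that point in one line.
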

\begin{proof}
Since $d_\eta(x,y) = \norm{x-y}/\eta$ for any $x, y \in \boule{0}{M}$ and $\eta= 2M$, we get under \autoref{hyp:autoreg_g},
\begin{equation}
\label{eq:alain}
 \E[\deta(g(x)+Z_1,g(y)+Z_1)] \leq  \eta^{-1} \norm{g(x)-g(y)} \wedge 1 \leq C_M \eta^{-1} \norm{x-y} \leq  C_M d_\eta(x,y) \eqsp.
\end{equation}
Finally, since \autoref{hyp:autoreg_g} implies that $g$ is $1$-Lipschitz on $\R^p$,  \eqref{eq:alain}
shows that $\E[\deta(g(x)+Z_1,g(y)+Z_1)] \leq \deta(x,y)$ for all $x,y \in \R^p$.
\end{proof}
For all $\eta, \eta' >0$, $d_{\eta}$ and $d_{\eta'}$ are Lipschitz equivalent,
\ie, there exists $C > 0$ such that for all $x,y \in \R^p$, $C^{-1}
d_{\eta}(x,y) \leq d_{\eta'}(x,y) \leq C d_{\eta}(x,y)$, which implies (see
\eqref{def_wasser}) that $W_{d_{\eta}}$ and $W_{d_{\eta'}}$ are Lipschitz
equivalent.
\begin{theorem}
\label{coro:rate_autoreg}
Assume \autoref{hyp:autoreg_epsilon} and \autoref{hyp:autoreg_g} hold. Then $P$ admits a unique invariant distribution $\pi$.
\begin{enumerate}[(i)]
\item
\label{item:coro:rate_autoreg1}
If
$\rho > \kappa_0$,
there exist two constants $C_1$ and $C_2$ such that for all $x \in \rset^p$ and $n
\in \N^*$
\[
W_{d_1}(P^n(x,\cdot),\pi) \leq C_1 V(x) \exp\parenthese{-C_2 n^{\varsigma}} \eqsp,
\]
where  $\varsigma = ( \kappa_0 \wedge (2 - \rho))/\rho$.
\item
\label{item:coro:rate_autoreg2}
If $\rho \leq \kappa_0$, then there exist $\tilde{\zeta} \in \ooint{0,1}$ and a
constant $C$ such that for all $x \in \rset^p$ and $n \in \nset^*$
\[
W_{d_1}(P^n(x,\cdot),\pi) \leq C V(x) \tilde{\zeta}^n \eqsp.
\]
\end{enumerate}
\end{theorem}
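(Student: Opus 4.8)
The plan is to derive both assertions from the abstract results of \autoref{sec:main_results}, feeding them the drift inequalities of \autoref{theo:drift_autoreg} and the coupling‑set property of \autoref{lem:autoregressif_d_small}, and then to read off the explicit rate by inverting the function $H_\phi$ of \eqref{eq:defHphi}. Throughout, \autoref{hyp:autoreg_epsilon} and \autoref{hyp:autoreg_g} are in force. First recall that, for any $M>0$, \autoref{lem:autoregressif_d_small} provides $\epsilon,\eta>0$ for which $\boule{0}{M}\times\boule{0}{M}$ is a $(1,\epsilon,d_\eta)$-coupling set for the kernel $Q$ of \eqref{eq:def_Q_auto_reg}, while its proof also shows $Q d_\eta(x,y)\leq d_\eta(x,y)$ for all $x,y$; since $d_\eta\leq 1$ and $(\R^p,d_\eta)$ is Polish, \autoref{hyp:small-set}$(\Delta,1,\epsilon)$ holds for any $\Delta\subseteq\boule{0}{M}^2$ (part~(i) being global, part~(ii) inherited by subsets). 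Because the Lyapunov functions $V$ of \autoref{theo:drift_autoreg} are radial and increasing in $\norm{x}$, every sublevel set $\{V\leq\upsilon\}$ is a centred ball, so given $\upsilon$ we may choose $M$ large enough that $\{V\leq\upsilon\}\subseteq\boule{0}{M}$ and then work with the coupling set $\Delta=\{V\leq\upsilon\}^2$. Finally, by \autoref{lem:prolongement_phi} we may assume the functions $\phi$ of \autoref{theo:drift_autoreg} belong to $\mathbb F$ and satisfy $\phi(0)=0$, as required by \autoref{hyp:drift_simple}.

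\emph{Case $\rho>\kappa_0$.} By \autoref{theo:drift_autoreg}-\eqref{item:theo:drift_autoreg1}, \autoref{hyp:drift_simple}$(\phi,V,b)$ holds with $\phi(x)=cx(1+\log x)^{1-\rho/(\kappa_0\wedge(2-\rho))}$ and $V(x)=\exp(\beta\norm{x}^{\kappa_0\wedge(2-\rho)})$. Fixing $\upsilon>\phi^\inv(2b)$, \autoref{prop:H1toDriftDouble} upgrades this to \autoref{hyp:drift_double}$(\{V\leq\upsilon\}^2,\tilde\phi,V)$ with $\tilde\phi=(1-2b/\phi(\upsilon))\,\phi$; choosing $M$ as above so that $\Delta=\{V\leq\upsilon\}^2$ is a $(1,\epsilon,d_\eta)$-coupling set, \autoref{hyp:small-set}$(\Delta,1,\epsilon)$ and \autoref{hyp:drift_double}$(\Delta,\tilde\phi,V)$ hold simultaneously, so \autoref{coro:existence_pi} yields the unique invariant measure $\pi$. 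Since $\tilde\phi$ is of subexponential type, \autoref{theo:convergence_log_double_drift_1}-\eqref{theo:item_rate2} gives, for every $\delta\in\ooint{0,1}$, $W_{d_\eta}(P^n(x,\cdot),\pi)\leq C_\delta\,V(x)/\tilde\phi(\{H^\inv_{\tilde\phi}(n)\}^\delta)$. It remains to compute $H_{\tilde\phi}$: setting $\kappa=\rho/(\kappa_0\wedge(2-\rho))-1>0$, so that $\tilde\phi(t)$ is, for large $t$, a constant times $t(1+\log t)^{-\kappa}$, the substitution $u=1+\log s$ in \eqref{eq:defHphi} makes $H_{\tilde\phi}(t)$ a constant times $(1+\log t)^{\kappa+1}-1$, whence $\log H^\inv_{\tilde\phi}(n)$ is, up to constants, $n^{1/(\kappa+1)}=n^{\varsigma}$ with $\varsigma=(\kappa_0\wedge(2-\rho))/\rho$. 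Substituting back, $\tilde\phi(\{H^\inv_{\tilde\phi}(n)\}^\delta)$ behaves like $\exp(c_\delta n^\varsigma)$ up to a polynomial factor in $n$, so that $1/\tilde\phi(\{H^\inv_{\tilde\phi}(n)\}^\delta)\leq C_1'\exp(-C_2 n^\varsigma)$ for any $C_2<c_\delta$ (adjusting $C_1'$ for small $n$); this gives $W_{d_\eta}(P^n(x,\cdot),\pi)\leq C_1 V(x)\exp(-C_2 n^\varsigma)$, and the bound for $W_{d_1}$ follows from the Lipschitz equivalence of $W_{d_1}$ and $W_{d_\eta}$ recorded just before the statement.

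\emph{Case $\rho\leq\kappa_0$, and main difficulty.} Here \autoref{theo:drift_autoreg}-\eqref{item:theo:drift_autoreg2} provides the geometric drift $PV(x)\leq\zeta V(x)+b$ with $V(x)=\exp(\beta\norm{x}^{\kappa_0})$ and $\zeta\in\ooint{0,1}$. Choosing $\upsilon>2b/(1-\zeta)$ and $M$ with $\{V\leq\upsilon\}\subseteq\boule{0}{M}$, the geometric drift together with the $d_\eta$-small set of \autoref{lem:autoregressif_d_small} are precisely the hypotheses of a (by now standard) weak Harris theorem giving geometric ergodicity from a geometric Lyapunov drift and a $d$-small set — equivalently, of the pairwise‑coupling construction underlying \autoref{theo:convergence_log_double_drift_1} specialized to a geometric rate; see \cite{WeakHarris} — which yields a unique $\pi$ together with $W_{d_\eta}(P^n(x,\cdot),\pi)\leq C V(x)\tilde\zeta^{\,n}$ for some $\tilde\zeta\in\ooint{0,1}$, and Lipschitz equivalence transfers this to $W_{d_1}$. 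The only steps calling for real care are, in the first case, arranging that the coupling set from \autoref{lem:autoregressif_d_small} contains the level set produced by \autoref{prop:H1toDriftDouble} (a bookkeeping matter of choosing $\upsilon$, $M$, $\eta$ consistently) and, above all, the asymptotic inversion of $H_{\tilde\phi}$ together with the verification that the polynomial‑in‑$n$ correction in $1/\tilde\phi(\{H^\inv_{\tilde\phi}(n)\}^\delta)$ is swallowed by the stretched exponential $\exp(-C_2 n^\varsigma)$; the geometric case is then immediate.
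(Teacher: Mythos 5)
Your proof is correct and takes essentially the same route as the paper's (much terser) argument: assertion (i) by combining \autoref{theo:drift_autoreg}, \autoref{lem:autoregressif_d_small}, \autoref{prop:H1toDriftDouble}, \autoref{coro:existence_pi} and \autoref{theo:convergence_log_double_drift_1}, and assertion (ii) by invoking the geometric weak Harris theorem of \cite{WeakHarris}. The additional details you supply (inclusion of the level set $\{V\leq \upsilon\}$ in a ball, transfer from $W_{d_\eta}$ to $W_{d_1}$ by Lipschitz equivalence, and the inversion of $H_{\widetilde{\phi}}$ yielding the rate $\exp(-C_2 n^{\varsigma})$ with $\varsigma=(\kappa_0\wedge(2-\rho))/\rho$) are exactly the computations the paper delegates to \autoref{tab:comparison} and the cited results, and you carry them out correctly.
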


\begin{proof}
  By application of \autoref{coro:existence_pi},
  \autoref{theo:convergence_log_double_drift_1} and
  \autoref{prop:H1toDriftDouble}, we deduce \eqref{item:coro:rate_autoreg1}
  from \autoref{theo:drift_autoreg}-\eqref{item:theo:drift_autoreg1} and
  \autoref{lem:autoregressif_d_small}. By an application of \cite[Theorem 4.8,
  Corollary 4.11]{WeakHarris}, we deduce \eqref{item:coro:rate_autoreg2} from
  \autoref{theo:drift_autoreg}-\eqref{item:theo:drift_autoreg2} and
  \autoref{lem:autoregressif_d_small}.
\end{proof}
Perhaps surprisingly, we cannot relax the condition $\kappa_0 \in
\ocint{0,1}$, to obtain geometric convergence for $1 < \rho \leq \kappa_0$.
Indeed, \cite[Theorem~3.2(a)]{roberts:tweedie-Geom:1996} provides an example where
\autoref{hyp:autoreg_epsilon} and \autoref{hyp:autoreg_g} are satisfied
for $\kappa_0 =2$ and $\rho \in \ooint{1,2}$, but the chain fails to be geometrically ergodic (for the total variation distance).
\subsection{The preconditioned Crank-Nicolson algorithm}\label{sec:PCN}
In this section, we consider the preconditioned Crank-Nicolson algorithm
introduced in \cite{beskos:roberts:stuart:Voss:2008} and analyzed in
\cite{hairer:stuart:vollmer:2012} for sampling in a separable Hilbert space
$(\hilbert,\norm{\cdot})$ a distribution with density $\pi \propto \exp(-g)$
with respect to a zero-mean Gaussian measure $\gamma$ with covariance operator
$\covariance$; see \cite{bogachev:1998}.  This algorithm is studied in
\cite{hairer:stuart:vollmer:2012} under conditions which imply the geometric
convergence in Wasserstein distance.
\begin{algorithm}[!h]
 \DontPrintSemicolon
 \KwData{$\rho \in \coint{0,1}$}
 \KwResult{$\sequence{X}[n][\N]$}
 \Begin{
   Initialize $X_0$\; \For{$n \geq 0$ }{ Generate $Z_{n+1} \sim \gamma$.  \;
     Generate $U_{n+1} \sim \mathcal{U}(\ccint{0,1})$ \; \eIf{ $U_{n+1} \leq
       \alpha(X_n, \rho X_n + \sqrt{1-\rho^2} Z_{n+1})= 1 \wedge \exp(g(X_n) -
       g(\rho X_n + \sqrt{1-\rho^2} Z_{n+1}))$} { $X_{n+1} = \rho X_n +
       \sqrt{1-\rho^2} Z_{n+1}$ \;} {$X_{n+1} = X_n$ \;} } }
 \caption{Preconditioned Crank-Nicolson Algorithm}
\label{algo:pCN}
\end{algorithm}
We consider the convergence of the Crank-Nicolson algorithm under the weaker
condition \autoref{hyp:pCN} below for which the results in
\cite{hairer:stuart:vollmer:2012} cannot be applied. We will show that
subgeometric convergence can nevertheless be obtained.
\begin{assumptionCN}
\label{hyp:pCN}
 The function $g:\hilbert  \to \R$ is $\beta$-Hölder for some
  $\beta \in \ocint{0,1}$ \ie, there exists $C_g$, such that for all $x,y \in
  \hilbert$, $\abs{g(x)-g(y)} \leq C_g \norm{x-y}^\beta$.
\end{assumptionCN}
Examples of densities satisfying \autoref{hyp:pCN} are $g(x) = -\norm{x}^\beta$
with $\beta \in \ocint{0,1}$. The following Theorem implies that under
\autoref{hyp:pCN}, $\exp(-g)$ is $\gamma$-integrable (see
\protect{\cite[Theorem~2.8.5]{bogachev:1998}}).
\begin{theorem}[Fernique's theorem]
\label{theo:Fernique}
 There exist $\theta \in \R_+^*$ and  a constant $C_\theta$ such that
$\int_{\hilbert} \exp(\theta \norm{\xi}^2) \dint \gamma (\xi) \leq C_\theta$.
\end{theorem}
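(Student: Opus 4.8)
The plan is to reproduce Fernique's classical argument, which rests on the rotational invariance of Gaussian measures together with a quadratic recursion on the tail probabilities of $\norm{\xi}$, where $\xi$ denotes an $\hilbert$-valued random variable with law $\gamma$. Let $\xi,\xi'$ be independent copies of $\xi$. The first step is the invariance lemma: $u \defeq (\xi-\xi')/\sqrt 2$ and $v \defeq (\xi+\xi')/\sqrt 2$ are independent, each with law $\gamma$. Since $\gamma$ is a centered Gaussian measure with covariance operator $\covariance$, its characteristic functional is $\E[\exp(\mathrm{i}\ps{h}{\xi})] = \exp(-\tfrac12\ps{\covariance h}{h})$ for all $h \in \hilbert$; computing $\E[\exp(\mathrm{i}\ps{h}{u} + \mathrm{i}\ps{k}{v})]$ directly shows that it factorizes as $\exp(-\tfrac12\ps{\covariance h}{h})\exp(-\tfrac12\ps{\covariance k}{k})$ for all $h,k \in \hilbert$, which is exactly the claim.

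The second step is elementary geometry. Since $u+v = \sqrt 2\,\xi$ and $v-u = \sqrt 2\,\xi'$, on the event $\{\norm{u}\leq s\}\cap\{\norm{v}>t\}$ with $t>s\geq 0$ the reverse triangle inequality gives $\sqrt 2\,\norm{\xi} \geq \norm{v}-\norm{u} > t-s$, and likewise $\sqrt 2\,\norm{\xi'} > t-s$. Combining this inclusion with the independence from step one,
\begin{equation*}
\proba{\norm{\xi}\leq s}\,\proba{\norm{\xi}>t} = \proba{\norm{u}\leq s}\,\proba{\norm{v}>t} \leq \proba{\norm{\xi} > (t-s)/\sqrt 2}^2 \eqsp.
\end{equation*}

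The third step converts this into an explicit Gaussian tail bound. Choose $t_0>0$ with $q \defeq \proba{\norm{\xi}>t_0} < 1/2$ (possible since $\norm{\xi}<\infty$ almost surely), define $t_{n+1} = t_0 + \sqrt 2\,t_n$, so that $t_n$ grows like $(\sqrt 2)^n$ and hence $t_n^2 \asymp 2^n$, and set $a_n \defeq \proba{\norm{\xi}>t_n}/\proba{\norm{\xi}\leq t_0}$. Applying the displayed inequality with $s=t_0$ and $t=t_{n+1}$, so that $(t-s)/\sqrt 2 = t_n$, yields $a_{n+1}\leq a_n^2$, hence $a_n \leq a_0^{2^n}$ with $a_0 = q/(1-q) < 1$. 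Since $t_n^2 \asymp 2^n$, this gives $\proba{\norm{\xi}>t_n} \leq \exp(-\theta' t_n^2)$ for some $\theta'>0$ and all $n$, and interpolating over $t\in[t_n,t_{n+1}]$ (the ratio $t_{n+1}/t_n$ being bounded) yields $\proba{\norm{\xi}>t} \leq C'\exp(-\theta'' t^2)$ for all $t\geq 0$, with $\theta''>0$.

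Finally, for any $\theta \in \ooint{0,\theta''}$, the layer-cake formula gives $\int_{\hilbert}\exp(\theta\norm{\xi}^2)\,\dint\gamma(\xi) = 1 + 2\theta\int_0^\infty t\,\rme^{\theta t^2}\,\proba{\norm{\xi}>t}\,\dint t \leq 1 + 2\theta C'\int_0^\infty t\exp(-(\theta''-\theta)t^2)\,\dint t < \infty$, which provides the constants $\theta$ and $C_\theta$. The only genuinely delicate point is the invariance lemma in infinite dimensions, and it is immediate from the explicit Gaussian characteristic functional; the rest of the argument manipulates only the real-valued random variables $\norm{\xi},\norm{\xi'},\norm{u},\norm{v}$, so no further measurability or integrability care is needed.
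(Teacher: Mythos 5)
Your proof is correct. Note, however, that the paper does not prove this statement at all: Fernique's theorem is quoted with a reference to \cite[Theorem~2.8.5]{bogachev:1998}, so there is no internal proof to compare against. What you have written is the classical Fernique rotation-invariance argument — the independence of $(\xi-\xi')/\sqrt{2}$ and $(\xi+\xi')/\sqrt{2}$ via the characteristic functional $h \mapsto \exp(-\tfrac12\ps{\covariance h}{h})$, the tail inequality $\proba{\norm{\xi}\leq s}\,\proba{\norm{\xi}>t} \leq \proba{\norm{\xi}>(t-s)/\sqrt{2}}^2$, the quadratic recursion $a_{n+1}\leq a_n^2$ along $t_{n+1}=t_0+\sqrt{2}\,t_n$ with $a_0=q/(1-q)<1$, and the layer-cake integration — which is essentially the proof given in the cited reference, so your route is the standard one rather than a genuinely different one. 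All steps check out: the factorization of the joint characteristic functional does imply independence because characteristic functionals determine Borel probability measures on a separable Hilbert space, the choice $q<1/2$ is exactly what makes $a_0<1$, $t_n^2\asymp 2^n$ gives the Gaussian tail $\proba{\norm{\xi}>t}\leq C'\exp(-\theta'' t^2)$ after interpolation, and any $\theta\in\ooint{0,\theta''}$ then yields the claimed bound with an explicit $C_\theta$.
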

The Crank-Nicolson kernel $P_{\pCN}$  has been shown to be geometrically ergodic  by \cite{hairer:stuart:vollmer:2012} under the assumptions that $g$ is globally Lipschitz and that there exist positive
constants $C, {M}_1, {M}_2$ such that for $x \in \hilbert$ with $\norm{x} \geq M_1$, $\inf_{z \in \boulefermee{\rho x}{{M}_2}} \exp( g(x) - g(z) ) \geq C$ (see \cite[Assumption 2.10-2.11]{hairer:stuart:vollmer:2012}), where
we denote by $\boule{x}{M}$ the open ball centered at $x \in \hilbert$
and of radius $M >0$ associated with $\norm{\cdot}$, and by $\boulefermee{x}{M}$ its closure.  Such an
assumption implies that the acceptance ratio $\alpha(x,\rho x +
\sqrt{1-\rho^2} \xi)$ is bounded from below as $\norm{x} \to \infty$
uniformly on $\xi \in \boulefermee{0}{{M}_2/\sqrt{1-\rho^2}}$.  In \autoref{hyp:pCN}, this condition is weakened
in order to address situations in which the acceptance-rejection ratio
vanishes when $\norm{x} \to \infty$: this happens when $\lim_{\norm{x}
  \to \plusinfty} \{ g(\rho x ) - g(x) \} = \plusinfty$. We first check that  \autoref{hyp:drift_simple}$(\phi,V,b$) is satisfied with
\begin{equation}
\label{eq:lyap_fun_pCN}
V(x) = \exp(s \norm{x}^2) \eqsp,
\end{equation}
where $s= (1-\rho)^2 \theta /16$ and $\theta$ is given by \autoref{theo:Fernique}.
\begin{proposition}
\label{lem:drift_g_hold}
Assume \autoref{hyp:pCN}, and let $\rho \in \coint{0,1}$. Then there exist $b
\in \R_+$ and $c \in \ooint{0,1}$ such that for all $x \in \hilbert$
\[
P_{\pCN}V(x) \leq V(x) - \, {\phi}\circ V(x)
+b  \eqsp,
\]
where $\phi \in \mathbb{F}$ and ${\phi}(t) \sim_{t \to
  \infty} c t \exp(- \{\log(t)/\kappa\}^{\beta/2})$, with $\kappa = \theta
C_g^{-2/\beta}/36 $.
\end{proposition}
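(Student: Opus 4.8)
The plan is to bound the one-step increment of $V$ directly from the Metropolis mechanism. Writing $Y_x := \rho x + \sqrt{1-\rho^2}\,Z$ with $Z \sim \gamma$ and $\alpha(x,Y_x) := 1 \wedge \exp(g(x) - g(Y_x))$, one has
\[
  P_{\pCN} V(x) - V(x) = \E[\alpha(x,Y_x)(V(Y_x) - V(x))].
\]
From \autoref{hyp:pCN} and $\|x - Y_x\| \le (1-\rho)\|x\| + \sqrt{1-\rho^2}\,\|Z\|$ we get the elementary two-sided estimate $\exp(-C_g\|x-Y_x\|^\beta) \le \alpha(x,Y_x) \le 1$. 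I will fix once and for all the splitting parameter $a := \tfrac12\sqrt{(1-\rho)/(1+\rho)}$, set $\bar\rho := \rho + \sqrt{1-\rho^2}\,a = (1+\rho)/2 < 1$ and $c_A := (1-\rho) + \sqrt{1-\rho^2}\,a = \tfrac32(1-\rho)$, split the expectation over the events $A := \{\|Z\| \le a\|x\|\}$ and $A^c$, and prove the claimed inequality for $\|x\| \ge R$ with $R$ large enough; the bounded region $\{\|x\| < R\}$ is absorbed into $b$, since there $P_{\pCN}V(x) \le V(x) + \E[V(Y_x)]$ is bounded above, $\E[V(Y_x)]$ being finite because $\E[V(Y_x)] \le V(x)^\rho \E[e^{s(1+\rho)\|Z\|^2}]$ with $s(1+\rho) = (1-\rho)^2(1+\rho)\theta/16 \le \theta/16 < \theta$, so \autoref{theo:Fernique} applies.

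On $A$ one has $\|Y_x\| \le \bar\rho\|x\|$, hence $V(Y_x) - V(x) \le V(x)(e^{-s(1-\bar\rho^2)\|x\|^2} - 1) \le -\tfrac12 V(x)$ once $\|x\| \ge R$. As this quantity is negative while $\alpha(x,Y_x) \ge \exp(-C_g c_A^\beta \|x\|^\beta)$ there, and $\mathbb{P}(A) \ge \tfrac12$ for $\|x\| \ge R$ (again by \autoref{theo:Fernique}), the contribution of $A$ is at most $-\tfrac14 V(x)\exp(-C_g c_A^\beta\|x\|^\beta)$. On $A^c$ I use the crude bound $\alpha(x,Y_x)(V(Y_x) - V(x)) \le V(Y_x)$ together with the optimized estimate $\|Y_x\|^2 \le \rho\|x\|^2 + (1+\rho)\|Z\|^2$, which gives $\E[V(Y_x)\1_{A^c}] \le V(x)^\rho \E[e^{s(1+\rho)\|Z\|^2}] =: C\,V(x)^\rho < \infty$. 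Summing, for $\|x\| \ge R$,
\[
  P_{\pCN}V(x) - V(x) \le -\tfrac14 V(x)\,e^{-C_g c_A^\beta\|x\|^\beta} + C\,V(x)^\rho .
\]

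The last step is a comparison: since $V(x)^\rho = e^{s\rho\|x\|^2}$ grows strictly slower than $V(x)e^{-C_g c_A^\beta\|x\|^\beta} = e^{s\|x\|^2 - C_g c_A^\beta\|x\|^\beta}$ (because $\rho < 1$ and $\beta < 2$), enlarging $R$ ensures $C\,V(x)^\rho \le \tfrac18 V(x)e^{-C_g c_A^\beta\|x\|^\beta}$, whence $P_{\pCN}V(x) - V(x) \le -\tfrac18 V(x)e^{-C_g c_A^\beta\|x\|^\beta}$ on $\{\|x\| \ge R\}$. Inverting $V$, i.e. using $\|x\|^\beta = (\log V(x)/s)^{\beta/2}$, the chosen values $a = \tfrac12\sqrt{(1-\rho)/(1+\rho)}$ and $s = (1-\rho)^2\theta/16$ give $C_g c_A^\beta s^{-\beta/2} = 6^\beta C_g\theta^{-\beta/2} = \kappa^{-\beta/2}$ with $\kappa = \theta C_g^{-2/\beta}/36$, so the right-hand side is exactly $-\phi \circ V(x)$ for $\phi(t) := \tfrac18\,t\exp(-(\log t/\kappa)^{\beta/2})$, which is concave, increasing and $C^1$ for $t$ large and extends to an element of $\mathbb F$ with $\phi(0) = 0$ by \autoref{lem:prolongement_phi}. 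Setting $b := \sup_{\|x\| < R}\{P_{\pCN}V(x) + \phi \circ V(x)\} < \infty$ then yields \autoref{hyp:drift_simple}($\phi,V,b$) on all of $\hilbert$, with the constant $c$ of the statement equal to $1/8 \in \ooint{0,1}$.

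The step I expect to be the main obstacle is this final accounting of constants. The acceptance probability can vanish like $\exp(-C_g\|x - Y_x\|^\beta)$ as $\|x\| \to \infty$ — precisely the situation excluded in \cite{hairer:stuart:vollmer:2012} — and it is exactly this subexponential factor that degrades the otherwise geometric-looking one-step contraction $V(Y_x) \approx V(x)^\rho$ into the asserted subexponential rate. Getting $\kappa$ to equal $\theta C_g^{-2/\beta}/36$, rather than merely \emph{some} positive constant, is what forces the choice of the splitting level $a$ and ties together $s$, $\theta$, $C_g$ and $\beta$; the rest (the drift computation on $A$, the Fernique estimates, and the bounded-set argument for $b$) is routine.
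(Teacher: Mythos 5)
Your proof is correct and follows essentially the same route as the paper: your event $A=\{\norm{Z}\leq a\norm{x}\}$ with $a=\tfrac12\sqrt{(1-\rho)/(1+\rho)}$ is exactly the paper's set $\{\sqrt{1-\rho^2}\norm{Z_1}\leq (1-\rho)\norm{x}/2\}$, and you use the same Hölder lower bound on the acceptance ratio over this set, yielding the same factor $\exp(-\{\log V(x)/\kappa\}^{\beta/2})$ with the identical bookkeeping $C_g(3(1-\rho)/2)^\beta s^{-\beta/2}=\kappa^{-\beta/2}$, followed by the same appeal to \autoref{lem:prolongement_phi}. The only (harmless) deviations are cosmetic: you write the drift as $\expe{\alpha(x,Y_x)(V(Y_x)-V(x))}$ instead of splitting on accept/reject with the uniform variable, and you absorb the complement event via the bound $CV(x)^\rho$ and an enlarged radius $R$, where the paper gets a bound uniform in $x$ from the exact cancellation $2s\norm{x}^2=(\theta/2)w(x)^2$.
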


\begin{proof}
The proof is postponed to \autoref{sec:proof:lem:drift_g_hold}.
\end{proof}
We now deal with showing \autoref{hyp:small-set}.  To that goal, we introduce
the distance $d_\eta(x,y) = 1 \wedge \eta^{-1} \norm{x-y}^\beta$, for any $\eta
>0$, and for $x,y \in E$ the basic coupling $Q_{\pCN}$ between
$P_{\pCN}(x,\cdot)$ and $P_{\pCN}(y,\cdot)$: the same Gaussian variable $\Xi$
and the same uniform variable $U$ are generated to build $X_1$ and $Y_1$, with
initial conditions $x,y$.  Define $\Lambda_{(x,y)}(z) = (\rho x +
\sqrt{1-\rho^2}z , \rho y + \sqrt{1-\rho^2}z)$ and $\tildegamma_{(x,y)}$ the
pushforward of $\gamma$ by $\Lambda_{(x,y)}$. Then an explicit form of
$Q_{\pCN}$ is given, for $A \in \B(\hilbert \times \hilbert)$, by:
\begin{multline}
  \label{eq:def_Q_pCN}
  Q_{\pCN}((x,y),A) = \int_{A} \alpha(x,v)\wedge \alpha(y,t)
  \dint \tildegamma_{(x,y)}(v,t) +\int_{\hilbert \times \hilbert} \parenthese{\alpha(y,t)-\alpha(x,v) }_+ \1_{A}(x,t)  \dint \tildegamma_{(x,y)}(v,t) \\
  + \int_{\hilbert \times \hilbert  } \parenthese{\alpha(x,v)-\alpha(y,t)}_+   \1_{A}(v,y) \dint \tildegamma_{(x,y)}(v,t)+ \delta_{(x,y)}(A) \int_{\hilbert \times \hilbert} (1-\alpha(x,v) \vee \alpha(y,t)) \dint
  \tildegamma_{(x,y)}(v,t)
\end{multline}
  where for $u \in \R$, $(u)_+ = \max(u,0)$.
 % In   \autoref{lem:g_simple_smallness_hold}, we prove that there exists $\eta >0$
 %  such that for any $u >1$, $\{ V \leq u \}^2$ is a $(\ell, \epsilon,
 %  d_\eta)$ coupling set for some $\ell \in \N^*$ and $\epsilon \in \ooint{0,1}$ (the coupling may be chosen to be $Q_{\pCN}$),
 % showing  \autoref{hyp:small-set}($\{V \leq u \}^2$).
  The following Proposition shows that \autoref{hyp:small-set} is satisfied.
\begin{proposition}
\label{lem:g_simple_smallness_hold}
Assume \autoref{hyp:pCN}.  There exists $\eta > 0$ such that, $Q_{\pCN}$ is a
$d_\eta$-weak contraction and for every $u > 1$, there exist $\ell \geq 1$ and
$\epsilon >0$ such that $\{V \leq u \}^2$ is a $(\ell, \epsilon,
d_\eta)$-coupling set.
\end{proposition}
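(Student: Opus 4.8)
The plan is to establish the two claims separately: first the $d_\eta$-weak contraction for a suitable $\eta$, then the coupling set property of $\{V\le u\}^2$. For the weak contraction, I would work directly from the explicit form \eqref{eq:def_Q_pCN} of $Q_{\pCN}$. Observe that under the basic coupling, if $X_1\ne Y_1$ then one of three things has happened: both chains accepted (so $X_1-Y_1=\rho(x-y)$), or exactly one accepted, in which case one coordinate stays put. In the ``both accept'' event $d_\eta(X_1,Y_1)\le \eta^{-1}\rho^\beta\norm{x-y}^\beta \le \rho^\beta d_\eta(x,y)$. The problematic events are the ``mismatch'' events, which have total $\tildegamma_{(x,y)}$-mass $\int |\alpha(x,v)-\alpha(y,t)|\,\dint\tildegamma_{(x,y)}(v,t)$; on these events $d_\eta$ can be as large as $1$. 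Using \autoref{hyp:pCN}, $|g(x)-g(\rho x+\sqrt{1-\rho^2}z)-g(y)+g(\rho y+\sqrt{1-\rho^2}z)|\le C_g(\norm{(1-\rho)x}^\beta+\norm{(1-\rho)y}^\beta)$ type bounds, so the mismatch probability is controlled by $\norm{x-y}^\beta$ times a factor that is \emph{not} uniformly small in $x,y$ — it depends on $\norm{x}+\norm{y}$. So a genuinely global weak contraction $Q_{\pCN}d_\eta\le d_\eta$ cannot come from a naive pointwise bound; instead I would argue: $d_\eta(X_1,Y_1)\le d_\eta(x,y)$ holds \emph{deterministically} on the non-mismatch part (since $\norm{X_1-Y_1}$ either equals $\rho\norm{x-y}\le\norm{x-y}$ or one of $\norm{X_1-x}=0$, $\norm{Y_1-y}=0$ forces $X_1=x,Y_1=y$ when both reject), and on a mismatch event $d_\eta(X_1,Y_1)\le 1$ but $\norm{X_1-Y_1}\le\norm{x-y}$ still holds in the ``one accepts'' case too (moving one endpoint toward $\rho\cdot$ the other does not increase the Euclidean distance beyond $\norm{x-y}$ only if $\rho$ is small — this needs care). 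The cleanest route is to check that under the basic coupling $\norm{X_1-Y_1}\le\norm{x-y}$ \emph{almost surely}, which gives $d_\eta$-weak contraction for every $\eta$ at once; this is where I expect a short but slightly delicate geometric argument using $\rho\in[0,1)$.

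For the coupling set claim, fix $u>1$, let $R=R(u)$ be such that $\{V\le u\}\subset\boulefermee{0}{R}$ (possible since $V(x)=\exp(s\norm{x}^2)$). The strategy is the same as in \autoref{subsec:SRWM-algorithm}: show that from any $(x,y)$ with $\norm{x},\norm{y}\le R$ there is, uniformly, a positive probability that after $\ell$ steps of the basic-coupled chain the two copies have met, i.e. $X_\ell=Y_\ell$, which gives $Q^\ell_{\pCN}d_\eta(x,y)=1-\probaMarkovTilde{x,y}[X_\ell=Y_\ell]\le 1-\epsilon\le(1-\epsilon)d_\eta(x,y)$ on the part of $\Delta$ where $d_\eta(x,y)=1$; on the part where $d_\eta(x,y)<1$ (i.e. $\norm{x-y}$ small) one can instead use the weak-contraction/one-step accept bound to get a factor $<1$. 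Actually the meeting event is delicate here because, unlike the lattice example, $\hilbert$ is a continuum and two copies driven by the same noise but with different acceptance decisions do not coalesce exactly; so I would instead couple differently inside $\Delta$ — or, more simply, use the Monge–Kantorovich construction as in the discussion after \autoref{hyp:small-set}: it suffices to show $W_{d_\eta}(P^\ell_{\pCN}(x,\cdot),P^\ell_{\pCN}(y,\cdot))\le(1-\epsilon)d_\eta(x,y)$ for $(x,y)\in\{V\le u\}^2$, and then invoke \cite[Corollary~5.22]{VillaniTransport} to get a coupling kernel $Q$ realizing \autoref{hyp:small-set}. This reduces everything to a Wasserstein contraction estimate on a ball.

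To get that estimate, the key mechanism is: after one accepted step, $\norm{X_1}\le\rho\norm{x}+\sqrt{1-\rho^2}\norm{Z}$, so with probability bounded below (uniformly on the ball, using that $\gamma$ charges every ball) both chains land in a fixed small ball $B_\varepsilon(0)$ having accepted; there, by \autoref{hyp:pCN} the acceptance ratios $\alpha(x,\cdot)$ and $\alpha(y,\cdot)$ are close when $x,y$ are both near $0$, so the total-variation mismatch is small and one gains a contraction factor strictly below $1$ on $d_\eta$ (using $\rho^\beta<1$ in the ``both accept from near-zero'' branch). Iterating this a fixed number $\ell=\ell(u)$ of times — enough to bring the ball of radius $R$ down to a small neighborhood of $0$ with uniformly positive probability — and combining with the $d_\eta$-weak contraction on the complementary events yields $Q^\ell_{\pCN}d_\eta(x,y)\le(1-\epsilon)d_\eta(x,y)$ on $\{V\le u\}^2$ for some $\epsilon=\epsilon(u)>0$. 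The main obstacle is precisely this contraction-from-near-zero step: one must quantify how the mismatch probability $\int|\alpha(x,v)-\alpha(y,t)|\dint\tildegamma_{(x,y)}(v,t)$ shrinks, using Hölder continuity of $g$ and Fernique integrability to control the Gaussian tails, and then balance the contraction gain $1-\rho^\beta$ against the loss $\le 1$ incurred on the mismatch set — choosing the intermediate ball radius $\varepsilon$ and the number of steps $\ell$ appropriately in terms of $u$.
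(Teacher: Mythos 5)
Your route to the $d_\eta$-weak contraction does not go through. The step you yourself flag as delicate --- that under the basic coupling $\norm{X_1-Y_1}\le\norm{x-y}$ almost surely --- is false: on a mismatch event where the proposal from $x$ is accepted and the one from $y$ rejected, $X_1=\rho x+\sqrt{1-\rho^2}Z_1$ while $Y_1=y$, and $\norm{X_1-Y_1}$ can be arbitrarily large. Your diagnosis that the H\"older bound on the mismatch mass degrades with $\norm{x}+\norm{y}$ is also not right: the relevant quantity telescopes as $\abs{g(x)-g(y)}+\abs{g(\rho x+\sqrt{1-\rho^2}z)-g(\rho y+\sqrt{1-\rho^2}z)}\le 2C_g\norm{x-y}^\beta$, uniformly in $x,y$. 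What the paper actually does (\autoref{lem:pCN_contrafaible_g_hold}) is to bound the mismatch mass by $C\,\eta\, d_\eta(x,y)\,[\rme^{h(x)\wedge h(y)}\wedge 1]$ with $h(z)=g(z)-g(\rho z)$, and to bound from below the probability that both proposals are accepted with small noise by a constant times $\rme^{-C_g}[\rme^{h(x)\wedge h(y)}\wedge 1]$; since the loss and the gain carry the \emph{same} state-dependent factor, choosing $\eta$ small enough yields $Q_{\pCN}d_\eta\le d_\eta$ on all of $\hilbert$, and a strict factor $k<1$ on any ball (where $h$ is bounded below) whenever $d_\eta(x,y)<1$. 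This balancing idea is missing from your plan, and without it the global weak contraction is not established.

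The coupling-set half has a second gap: reducing to an $\ell$-step Wasserstein estimate and invoking \cite[Corollary 5.22]{VillaniTransport} does not give \autoref{hyp:small-set}($\Delta,\ell,\epsilon$), because if $Q$ is the one-step optimal coupling then $Q^\ell((x,y),\cdot)$ is merely \emph{some} coupling of $P^\ell(x,\cdot)$ and $P^\ell(y,\cdot)$, so $Q^\ell d_\eta(x,y)\ge W_{d_\eta}(P^\ell(x,\cdot),P^\ell(y,\cdot))$ --- the inequality you need goes the other way (this is precisely why the paper treats coupling sets of order $\ell$ directly, and why the statement concerns the explicit kernel $Q_{\pCN}$). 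Your mechanism of sending both copies near $0$ in one accepted step also fails with shared noise: $\norm{\rho x+\sqrt{1-\rho^2}z}$ and $\norm{\rho y+\sqrt{1-\rho^2}z}$ cannot both be small unless $\rho\norm{x-y}$ already is. The paper needs neither exact meeting nor a visit near $0$: on the event that for $\ell$ consecutive steps the common noise satisfies $\sqrt{1-\rho^2}\norm{Z_j}\le L/\ell$ and both proposals are accepted, both chains remain in $\boule{0}{2L}$, where the acceptance ratio is bounded below by a constant $\delta>0$, and the difference contracts deterministically, $X_\ell-Y_\ell=\rho^\ell(x-y)$; choosing $\ell$ with $\eta^{-1}\rho^{\beta\ell}(2L)^\beta<1$ and lower-bounding the probability of this event by $\bigl(\delta\,\gamma(\sqrt{1-\rho^2}\norm{z}\le L/\ell)\bigr)^\ell>0$ gives $Q_{\pCN}^\ell d_\eta(x,y)\le\zeta<1$ when $d_\eta(x,y)=1$, while the case $d_\eta(x,y)<1$ on the ball follows from the strict one-step contraction of the first part.
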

\begin{proof}
See \autoref{proof:lem:g_simple_smallness_hold}
\end{proof}
Note that for all $\eta >0$, $d_\eta$ is  Lipschitz equivalent to $d_1$, therefore $W_{d_\eta}$ and $W_{d_1}$ are Lipschitz equivalent.
As a consequence of \autoref{lem:drift_g_hold}, \autoref{lem:g_simple_smallness_hold},
\autoref{theo:convergence_log_double_drift_1} and \autoref{prop:H1toDriftDouble},
we have
\begin{theorem}
\label{theo:pCN}
Let $P_{\pCN}$ be the kernel of the preconditioned Crank-Nicolson algorithm
with target density $\dint \pi \propto \exp(-g) \dint \gamma$ and design
parameter $\rho \in \coint{0,1}$. Assume \autoref{hyp:pCN}.  Then $P_{\pCN}$
admits $\pi$ as a unique invariant probability measure and there exist $C_1,
C_2$ such that for all $n \in \N^*$ and $x \in \hilbert$
\[
W_{d_1}(P_{\pCN}^n(x,\cdot),\pi )
\leq C_1 V(x) \exp \parenthese{-\kappa (\log(n)- C_2 \log (\log(n)))^{2/\beta}} \eqsp,
\]
where $V$ is given by \eqref{eq:lyap_fun_pCN}, $d_1(x,y) = \norm{x-y}^\beta
\wedge 1$ and $\kappa = \theta C_g^{-2/\beta}/36$ for $\theta$ given by
\autoref{theo:Fernique}.
\end{theorem}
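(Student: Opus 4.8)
The plan is to assemble the conclusion from the three structural results already developed in \autoref{sec:main_results}, applied to the Crank--Nicolson kernel $P_{\pCN}$ together with the coupling kernel $Q_{\pCN}$ of \eqref{eq:def_Q_pCN}, the Lyapunov function $V$ of \eqref{eq:lyap_fun_pCN}, and the distance $d_\eta$ introduced just above \autoref{lem:g_simple_smallness_hold}. Concretely, \autoref{lem:drift_g_hold} provides \autoref{hyp:drift_simple}($\phi,V,b$) with $\phi \in \mathbb{F}$ and $\phi(t) \sim_{t\to\infty} c\, t \exp(-\{\log(t)/\kappa\}^{\beta/2})$, and \autoref{lem:g_simple_smallness_hold} provides, for a suitable $\eta>0$, that $Q_{\pCN}$ is a $d_\eta$-weak contraction and that $\{V \le u\}^2$ is a $(\ell,\epsilon,d_\eta)$-coupling set for some $\ell \ge 1$, $\epsilon>0$ once $u>1$ is fixed; in particular \autoref{hyp:small-set}($\{V\le u\}^2,\ell,\epsilon$) holds. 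Choosing $u > \phi^\inv(2b)$ and invoking \autoref{prop:H1toDriftDouble} then yields \autoref{hyp:drift_double}($\{V\le u\}^2, c'\phi, V$) with $c' = 1 - 2b/\phi(u) \in \ooint{0,1}$. Thus the pair $(\autoref{hyp:small-set},\autoref{hyp:drift_double})$ needed by \autoref{coro:existence_pi} and \autoref{theo:convergence_log_double_drift_1} is in force (with $c'\phi$ in place of $\phi$, which does not change the asymptotic order since $c'$ is a positive constant). By \autoref{coro:existence_pi}, $P_{\pCN}$ has a unique invariant probability $\pi$, and $\int_\hilbert \phi\circ V\, \dint\pi < \infty$; integrability of $\exp(-g)$ against $\gamma$ (hence well-definedness of the target $\pi \propto \exp(-g)\dint\gamma$) follows from \autoref{hyp:pCN} via Fernique's theorem \autoref{theo:Fernique}, as noted before \autoref{theo:Fernique}.

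It remains to convert the abstract bound of \autoref{theo:convergence_log_double_drift_1}-\eqref{theo:item_rate1} into the stated closed form. With $\phi(t) \sim c\,t\exp(-\{\log(t)/\kappa\}^{\beta/2})$ one computes the asymptotics of $H_\phi(t) = \int_1^t \dint s/\phi(s)$: since $1/\phi(s) \sim c^{-1} s^{-1}\exp(\{\log(s)/\kappa\}^{\beta/2})$, substituting $w = \log s$ gives $H_\phi(t) \sim c^{-1}\int_0^{\log t} \exp(\{w/\kappa\}^{\beta/2})\,\dint w$, and a Laplace/Watson-type estimate of this integral (the integrand being dominated by its endpoint) shows $\log H_\phi(t) \sim \{\log(t)/\kappa\}^{\beta/2}$, equivalently $\log H_\phi^\inv(n) \sim \kappa\, (\log n)^{2/\beta}$ after inversion; one keeps the second-order term, which is logarithmic in $n$ and produces the $-C_2\log\log n$ correction in the exponent. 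One then reads off $\phi(H_\phi^\inv(n/2))$: taking $t = H_\phi^\inv(n/2)$, $\log(1/\phi(t)) = \{\log t/\kappa\}^{\beta/2} - \log t - \log c + o(1)$, and since $\log t = \log H_\phi^\inv(n/2) = \kappa(\log n)^{2/\beta}(1 + o(1))$ dominates the $\{\log t/\kappa\}^{\beta/2} = \log H_\phi(t)(1+o(1)) \sim \log(n/2)$ term, the dominant behaviour of $1/\phi(H_\phi^\inv(n/2))$ is $\exp(-\kappa(\log n)^{2/\beta}(1 + o(1)))$ up to a polynomial-in-$n$ factor absorbed by the $\log\log n$ term; the first term $V(x)/H_\phi^\inv(n/2)$ is $V(x)\exp(-\kappa(\log n)^{2/\beta}(1+o(1)))$ and is of the same or smaller order, and the third term, carrying an extra $\log H_\phi^\inv(n)$ in its denominator argument, is likewise controlled by the same envelope. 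Collecting the three contributions and absorbing constants and sub-leading logarithmic corrections into $C_1, C_2$ gives
\[
W_{d_\eta}(P_{\pCN}^n(x,\cdot),\pi) \le C_1 V(x)\exp\bigl(-\kappa(\log n - C_2\log\log n)^{2/\beta}\bigr),
\]
and since $d_\eta$ and $d_1$ are Lipschitz equivalent (as remarked after \autoref{lem:g_simple_smallness_hold}), so are $W_{d_\eta}$ and $W_{d_1}$, giving the same bound for $W_{d_1}$ after adjusting $C_1$.

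The main obstacle is the asymptotic analysis in the second paragraph: getting the constant $\kappa$ and the precise form $(\log n - C_2\log\log n)^{2/\beta}$ right requires careful two-term asymptotics of $H_\phi$ and of its inverse in a regime where $\phi$ is "almost linear" (so $H_\phi$ grows slightly faster than $\log$, and $H_\phi^\inv$ slightly slower than $\exp$), and one must check that the logarithmic corrections coming from inverting $H_\phi$, from the factor $c$, and from the argument shift $n \mapsto n/2$ all combine into a single $\log\log n$ term with a finite constant rather than degrading the leading exponent. A secondary point to be careful about is that \autoref{prop:H1toDriftDouble} replaces $\phi$ by $c'\phi$; one should verify (trivially, since $H_{c'\phi} = c'^{-1} H_\phi$ and thus $H_{c'\phi}^\inv(n) = H_\phi^\inv(c' n)$) that this only shifts $n$ by a constant factor and hence is again absorbed into $C_2$. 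I would also double-check that $\phi$ as produced by \autoref{lem:drift_g_hold} genuinely lies in $\mathbb{F}$ (concave, increasing, $C^1$ on $\coint{1,\infty}$, $\phi\to\infty$, $\phi'\to 0$) on a tail, using \autoref{lem:prolongement_phi} to extend it to a genuine element of $\mathbb{F}$ without changing the rate, exactly as the remark after \autoref{prop:H1toDriftDouble} permits.
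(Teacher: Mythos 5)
Your proposal is correct and follows exactly the route the paper intends: the paper's "proof" is the one-line assembly of \autoref{lem:drift_g_hold}, \autoref{lem:g_simple_smallness_hold}, \autoref{prop:H1toDriftDouble}, \autoref{coro:existence_pi} and \autoref{theo:convergence_log_double_drift_1}, and you supply the same assembly plus the asymptotic inversion of $H_\phi$ that the paper leaves implicit. Your handling of the delicate points is sound: the constant $c'$ from \autoref{prop:H1toDriftDouble} only rescales the argument of $H_\phi^{\inv}$, the polynomial-in-$n$ factor from $1/\phi(H_\phi^{\inv}(n/2))$ and the $\log H_\phi^{\inv}(n)$ in the third term of \eqref{theo:item_rate1} are indeed absorbed into the $\log\log n$ correction because $2/\beta-1\geq 1$, and the passage from $d_\eta$ to $d_1$ uses the Lipschitz equivalence noted after \autoref{lem:g_simple_smallness_hold}.
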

\autoref{theo:pCN} covers the case of the independant sampler (case $\rho=0$).
Both the rate of convergence through the constant $C_2$ and the control in the
initial value $x$ throught $C_1$ and the function $V$ depends on $\rho$.

%%% Local Variables: 
%%% mode: latex
%%% TeX-master: "main"
%%% End: 

\section{Proofs of \autoref{sec:main_results}}
\label{sec:proof}
In this section, $C$ is a constant which may take different values upon each appearance.

For $\Delta \in \B(E \times E)$, $\ell \in \nset^*$ and a canonical Markov
chain on the space $( (E \times E)^\N, (\B(E) \otimes \B(E))^{\otimes \N})$,
denote by $T_0 = \inf \defEns{n \geq \ell, (X_n,Y_n ) \in \Delta}$ the first
return time to $\Delta$ after $\ell-1$ steps.  Then, define recursively for $j
\geq 1$,
\begin{equation}
\label{eq:def_T_m}
T_j = T_0 \circ \theta ^{T_{j-1}} + T_{j-1} =
T_0  + \sum_{k=0}^{j-1}  T_0 \circ \theta^{T_k} \eqsp,
\end{equation}
where $\theta$ is the shift operator.

Let $Q$ be a coupling kernel for $P$. Hereafter, $\{(X_n, Y_n), n \in \N \}$ is
the canonical Markov chain on the space $( (E \times E)^\N, (\B(E) \otimes
\B(E))^{\otimes \N})$ with Markov kernel $Q$. We denote by
$\widetilde{\mathbb{P}}_{x,y}$ and $\widetilde{\mathbb{E}}_{x,y}$ the
associated canonical probability and expectation, respectively, when the
initial distribution of the Markov chain is the Dirac mass at $(x,y)$.

For any $n \in \N^\star$ and $x,y \in E$, the $n$-iterated kernel $Q^n((x,y),
\cdot)$ is a coupling of $(P^n(x,\cdot), P^n(y,\cdot))$; hence
$W_d(P^n(x,\cdot),P^n(y,\cdot)) \leq \expeMarkovTilde{x,y}{d(X_n,Y_n)}$.
Define the filtration $\{ \filtrationTilde_n, n \geq 0\}$ by
$\filtrationTilde_n= \sigma( (X_k,Y_k), k \leq n)$.

Before proceeding to the actual derivation of the proofs, we present a roadmap of them.
The key step for our results is given by the following inequality: for any $x,y \in E$ and $n,m \geq 1$,
\begin{equation}
  \label{eq:sketchproof:objectif}
  W_d\left( P^n(x,\cdot), P^m(y,\cdot) \right) \leq B(n,m) \ \left(V(x) + V(y) \right) \eqsp,
\end{equation}
with $\lim_{n,m \to \plusinfty} B(n,m) = 0$. Under the assumptions of
\autoref{theo:existence-pi}, this inequality will imply that $P$ admits at most
one invariant probability.  In addition, by applying
\eqref{eq:sketchproof:objectif} with $n \leftarrow n+m$, and $y \leftarrow x$,
we show that $\{P^n(x,\cdot), n \in \N \}$ is a Cauchy sequence in
$(\Pens(E),W_d)$ and therefore converges in $W_d$ to some probability measure
$\pi_x$ which is shown to be invariant for $P$. Since $P$ admits one invariant
probability measure, then $\pi_x$ does not depend on $x$ (see
\autoref{subsec:proof:theo:existence_pi}). The proof of
\autoref{coro:existence_pi} consists in verifying that the assumptions of
\autoref{theo:existence-pi} are satisfied. 

 The proof of \autoref{theo:convergence_log_double_drift_1} also follows
  from \eqref{eq:sketchproof:objectif}, but an explicit expression of $B$ is
  required (see \autoref{lem:itere_couplage}). Taking $n=m$ and integrating
  this inequality \wrt\ the unique invariant distribution $\pi$ will conclude
  the proof.

Let us now explain the computation of the upper bound
(\ref{eq:sketchproof:objectif}).  
The contraction property of $Q$ (see \autoref{hyp:small-set}\eqref{item:contract_coupling_2})
% \begin{equation}
%   \label{eq:dContraction_couplageR}
% \widetilde{\mathbb{E}}_{x,y}[d(X_1,Y_1)] \leq d(x,y) \qquad \text{for all $(x,y) \in E \times E$} \eqsp,
% \end{equation}
combined with the Markov property of $\defEns{ (X_n, Y_n), n \in \N }$ imply
that $\{ d(X_n,Y_n), n \in \N \}$ is a supermartingale with respect to the
filtration $\widetilde{\filtration}_n$; this property yields $
\expeMarkovTilde{x,y}{d(X_n,Y_n)} \leq (1-\epsilon)^{m-1} +
\probaMarkovTilde{x,y}[T_m \geq n] $ for any $n,m \geq 0$. By the Markov
inequality, for any increasing rate function $R$, it holds
\begin{equation}
  \label{eq:sketchproof:ineqmarkov}
  \expeMarkovTilde{x,y}{d(X_n,Y_n)} \leq (1-\epsilon)^{m-1} +
\frac{\expeMarkovTilde{x,y}{R(T_m)}}{R(n)} \eqsp.
\end{equation}
The last step of the proof is to compute an upper bound for the moment
$\expeMarkovTilde{x,y}{R(T_m)}$. Then $m$ is chosen in order to balance the two
terms in the RHS of \eqref{eq:sketchproof:ineqmarkov}.

We preface the proof of our results by the following result.

\begin{proposition}
\label{lem:surmartingale2}
Assume that there exists a coupling kernel $Q$ for $P$, $\Delta \in \B(E \times
E)$, $\ell \in \nset^*$ and $\epsilon>0$ such that
\autoref{hyp:small-set}($\Delta,\ell,\epsilon$) holds. Then, for all $x,y \in
E$, and $n \geq 0$, $m\geq 0$ :
\begin{equation*}
\expeMarkovTilde{x,y}{d(X_n,Y_n)} \leq  (1-\epsilon)^{m} + \probaMarkovTilde{x,y}[T_m \geq  n] \eqsp.
\end{equation*}
\end{proposition}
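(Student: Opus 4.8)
The plan is to prove the bound by combining two ingredients: first, that $\{d(X_n,Y_n), n\in\N\}$ is a supermartingale with a geometric decay whenever the chain sits in $\Delta$ at times spaced $\ell$ apart; and second, a decomposition of the event according to whether the $m$-th return time $T_m$ to $\Delta$ (in the sense of \eqref{eq:def_T_m}) has occurred before time $n$ or not.

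\begin{proof}
Fix $x,y \in E$. By \autoref{hyp:small-set}($\Delta,\ell,\epsilon$)-\eqref{item:contract_coupling_2} and the Markov property of $\{(X_n,Y_n), n\in\N\}$ under $\widetilde{\mathbb{P}}_{x,y}$, for every $n \geq 0$,
\[
\expeMarkovTilde{x,y}{d(X_{n+1},Y_{n+1}) \sachant{\filtrationTilde_n}} = Q d(X_n,Y_n) \leq d(X_n,Y_n) \eqsp,
\]
so that $\{d(X_n,Y_n), n\in\N\}$ is a nonnegative $\{\filtrationTilde_n\}$-supermartingale. Moreover, by \autoref{hyp:small-set}($\Delta,\ell,\epsilon$)-\eqref{item:contract_coupling_3} and the weak contraction property, on the event $\{(X_k,Y_k) \in \Delta\}$ we get $\expeMarkovTilde{x,y}{d(X_{k+\ell},Y_{k+\ell}) \sachant{\filtrationTilde_k}} \leq (1-\epsilon) d(X_k,Y_k)$.

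The next step is to track the successive visits to $\Delta$. Recall $T_0 = \inf\{n\geq \ell, (X_n,Y_n)\in\Delta\}$ and the recursion \eqref{eq:def_T_m}. On $\{T_m < n\}$, the process has made $m$ "successful" contraction steps of factor $(1-\epsilon)$ at the times $T_0, T_1, \dots, T_{m-1}$: applying the strong Markov property successively at $T_{m-1}, T_{m-2}, \dots, T_0$ together with the supermartingale property between consecutive return times and the contraction at each $T_j$, one obtains
\[
\expeMarkovTilde{x,y}{d(X_n,Y_n) \1_{\{T_m < n\}}} \leq (1-\epsilon)^m \eqsp,
\]
since $d \leq 1$. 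On the complementary event $\{T_m \geq n\}$ we simply bound $d(X_n,Y_n) \leq 1$, giving $\expeMarkovTilde{x,y}{d(X_n,Y_n)\1_{\{T_m\geq n\}}} \leq \probaMarkovTilde{x,y}[T_m\geq n]$. Adding the two contributions yields
\[
\expeMarkovTilde{x,y}{d(X_n,Y_n)} \leq (1-\epsilon)^m + \probaMarkovTilde{x,y}[T_m \geq n] \eqsp,
\]
which is the claim.
\end{proof}

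The step I expect to be the main obstacle is making the recursive contraction argument on $\{T_m < n\}$ fully rigorous: one must carefully iterate the strong Markov property at the stopping times $T_j$, using the weak-contraction (supermartingale) property on the stretches strictly between returns and the $(1-\epsilon)$-contraction exactly at each return time, and check that the optional stopping / tower arguments are valid (here finiteness of $d\leq 1$ makes integrability automatic). One subtlety is that $T_j$ is a return time "after $\ell-1$ steps", so between $T_{j-1}$ and $T_j$ there are at least $\ell$ steps and the contraction at $T_{j-1}$ uses $Q^\ell$; the supermartingale property covers the remaining gap. A clean way to organize this is to show by downward induction on $j$ that $\expeMarkovTilde{x,y}{d(X_n,Y_n)\1_{\{T_{m-1}<n\}} \sachant{\filtrationTilde_{T_j}}} \leq (1-\epsilon)^{m-j}$ on the relevant event, then take $j=0$ and use $d\leq 1$ at the end.
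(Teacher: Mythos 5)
Your proof is correct and follows essentially the same route as the paper: both rest on the fact that $\{d(X_n,Y_n)\}$ is a bounded nonnegative supermartingale for $\filtrationTilde_n$ and that the strong Markov property combined with \autoref{hyp:small-set}($\Delta,\ell,\epsilon$) yields the contraction $\expeMarkovTilde{x,y}{d(X_{T_{m+1}},Y_{T_{m+1}}) \sachant{\filtrationTilde_{T_m}}} \leq (1-\epsilon)\, d(X_{T_m},Y_{T_m})$ across successive return times. The only difference is presentational: the paper concludes by invoking \cite[Lemma 3.1]{Jarner_Wasserstein}, whereas you carry out that final step yourself via the decomposition over $\{T_m < n\}$ and $\{T_m \geq n\}$ (the iteration you flag as delicate is exactly what that cited lemma encapsulates, and your outline of it is sound).
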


\begin{proof}
  Set $Z_n = d(X_n,Y_n)$; under
  \autoref{hyp:small-set}($\Delta,\ell,\epsilon$),
  $\{(Z_n,\filtrationTilde_n)\}_{n \geq 0}$ is a bounded non-negative
  supermartingale and for all $(x,y) \in \Delta$,
  $\expeMarkovTilde{x,y}{Z_\ell} \leq (1-\epsilon)d(x,y)$. Denote by $Z_\infty$
  its $\probaMarkovTilde{x,y}$-\as\ limit.  By the optional stopping theorem,
  we have for every $m \geq 0$: $ \expeMarkovTilde{x,y}{Z_{T_{m+1}}
    \sachant{\filtrationTilde_{T_{m}+\ell}}} \leq Z_{T_{m}+\ell}$.  On the
  other hand, by the strong Markov property, $\expeMarkovTilde{x,y}{Z_{T_{m}+\ell}
    \sachant{\filtrationTilde_{T_{m}}}} \leq (1-\epsilon) Z_{T_m}$.  By
  combining these two relations, we get: $\expeMarkovTilde{x,y}{Z_{T_{m+1}}
    \sachant{\filtrationTilde_{T_{m}}}} \leq (1-\epsilon) Z_{T_{m}}$.  Since
  $Z_n$ is upper bounded by $1$, the proof follows from \cite[lemma
  3.1]{Jarner_Wasserstein}.
\end{proof}

\subsection{Proof of \autoref{theo:existence-pi}}
\label{subsec:proof:theo:existence_pi}
By \autoref{lem:surmartingale2} and the Markov inequality for all $m \geq 0$,
we get
\begin{equation}
\label{eq:laborne}
\expeMarkovTilde{x,y}{d(X_n,Y_n)} \leq (1-\epsilon)^{m} + n^{-1} \expeMarkovTilde{x,y}{T_m} \eqsp.
\end{equation}
Using \eqref{eq:def_T_m} and  the strong Markov property, we obtain
$\expeMarkovTilde{x,y}{T_m}= \expeMarkovTilde{x,y}{T_0} + \expeMarkovTilde{x,y}{\sum_{k=0}^{m-1} \expeMarkovTilde{X_{T_k},Y_{T_k}}{T_0}} $.
Using \cite[Proposition~11.3.3]{bible} and the Markov property we have that
\[
\expeMarkovTilde{x,y}{T_0}  \leq   Q^{\ell-1} \Vl(x,y) + b + \ell - 1 \eqsp,
\]
which implies that $\expeMarkovTilde{x,y}{T_m} \leq m \sup_{(x,y)\in \Delta}
Q^{\ell-1} \Vl(x,y) + Q^{\ell-1} \Vl(x,y) + (m+1) \left( b + \ell -1\right)$,
where the constant $b$ is defined in \eqref{eq:simple-drift-condition}.
Plugging this inequality into \eqref{eq:laborne} and taking $m = \lceil -
\log(n)/ \log(1-\epsilon) \rceil$ implies that there exists $C < \infty$
satisfying
\begin{equation}
\label{eq:bornealain}
Q^n d(x,y)= \expeMarkovTilde{x,y}{d(X_n,Y_n)} \leq  C (\log(n)/n) Q^{\ell-1} \Vl(x,y) \leq  C (\log(n)/n) \Vl(x,y)    \eqsp,
\end{equation}
where we have used that $Q^{\ell-1} \Vl(x,y) \leq \Vl(x,y) + b (\ell-1)$ (the constant $C$ takes different values upon each appearance).

\subsubsection*{Uniqueness of the invariant probability}
The proof is by contradiction.  Assume that there exist two invariant
distributions $\pi$ and $\nu$, and let $\lambda \in \couplage{\pi}{\nu}$.
According to
\autoref{lem:contract_coupling_weak_contraction}-\eqref{lem:contraction_rev_mu_nu_2},
we have for every integer $n$,
\[
W_{d}(\pi,\nu) = W_{d}(\pi P^n , \nu P^n) \leq \int_{E\times E} Q^nd(x,y)
\, \lambda (\dint x, \dint y) \eqsp.
\]
We prove that the RHS converges to zero by application of the dominated
convergence theorem.  It follows from \eqref{eq:bornealain} that for all $x,y
\in E$ and $n \geq 0$, $g_n(x,y) \eqdef Q^nd(x,y) \leq C \Vl (x,y) \;
\log(n)/n$ for some $C < \infty$.  Therefore, the sequence of functions
$\sequence{g}[n][\N]$ converges pointwise to $0$. Since $d \leq 1$, $g_n(x,y)
\leq 1$. Hence, by the Lebesgue theorem, $\int_{E\times E} g_n(x,y) \, \lambda
(\dint x,\dint y) \flecheLimite 0$ showing that $W_{d}(\pi,\nu) = 0$, or
equivalently $\nu = \pi$ since $W_d$ is a distance on $\Pens(E)$.

\subsubsection*{Existence of an invariant measure}
  Let $x_0 \in E$. We first show that there exists $\defEns{m_k, k \in \N}$ such that
  $\{P^{m_k}(x_0, \cdot), k \in \N \}$ is a Cauchy sequence for $W_{d}$.
Let $n,k \in \N^*$ and choose $M \geq 1$.  By
\autoref{lem:contract_coupling_weak_contraction}-\eqref{lem:contraction_rev_mu_nu_2}:
\begin{multline}
W_{d}(P^n(x_0,\cdot), P^{n+n_k}(x_0,\cdot))
%\leq \inf_{\lambda \in ~ \couplage{\delta_{x_0}}{
%    P^{n_k}(x_0,\cdot) }} \int_{E\times E}Q^nd(z,t)  \lambda (\dint z, \dint t) \\
\leq \inf_{\lambda \in ~ \couplage{\delta_{x_0}}{
    P^{n_k}(x_0,\cdot) }} \left\lbrace \int_{E\times E}
    \1_{\{\Vl(z,t) \geq  M\}} Q^n \, d(z,t)  \lambda (\dint z, \dint t)  + \right. \\
\left.    \int_{E\times E} \1_{\{\Vl(z,t)<  M\}} Q^nd(z,t) \lambda (\dint z, \dint t) \right\rbrace \eqsp.
\label{eq:existence_base}
\end{multline}
We consider separately the two terms.  Set $M_\psi = \sup_k P^{n_k}[\psi \circ
\Vl_{x_0}](x_0)$. Let $\lambda \in \couplage{\delta_{x_0}}{ P^{n_k}(x_0,\cdot)
}$. Since $d$ is bounded by $1$, we get
\begin{multline}
\label{eq:existence_premiere_borne}
\int_{E\times E} \1_{\{\Vl(z,t) \geq M\}} Q^nd(z,t)
\lambda (\dint z, \dint t) \leq \int_{E\times E} \1_{\{\Vl(z,t) \geq M\}}
\lambda (\dint z, \dint t) \leq P^{n_k}(x_0, \{\Vl_{x_0} \geq M\}) \\ \leq
P^{n_k}(x_0, \{ \psi \circ \Vl_{x_0} \geq \psi(M)\}) \leq P^{n_k}[\psi \circ
  \Vl_{x_0}](x_0)/\psi(M) \leq  M_\psi/\psi(M) \eqsp,
\end{multline}
where we have used \eqref{eq:borne_suite_infini} and the Markov inequality. In addition by
\eqref{eq:bornealain}, there exists $C>0$ such that:
\begin{equation*}
\int_{E\times E} \1_{\{\Vl(z,t) < M\}}
 Q^nd(z,t) \lambda (\dint z, \dint t)
   \leq C (\log(n)/n) \, \int_{E\times E} \1_{\{\Vl(z,t) < M\}}  \Vl(z,t)  \lambda (\dint z, \dint t)
  \eqsp.
\end{equation*}
Furthermore, $ x \mapsto \psi(x)/x$ is non-increasing so that $\Vl(z,t) \leq M
\psi(\Vl(z,t))/\psi(M)$ on $\defEns{\Vl(z,t) \leq M}$. This inequality and
\eqref{eq:borne_suite_infini} imply
\begin{equation} \label{eq:deuxieme_terme_ineg_propo_finale_existence}
\int_{E\times E} \1_{\{\Vl(z,t) < M\}} Q^nd(z,t)  \lambda (\dint z, \dint t)
\leq C(\log(n) /n )   M_\psi M / \psi(M)   \eqsp.
\end{equation}
Plugging \eqref{eq:existence_premiere_borne} and
\eqref{eq:deuxieme_terme_ineg_propo_finale_existence} in
\eqref{eq:existence_base}, we have for every $M >0$, $n,k \in \N^*$
\[
W_{d}(P^n(x_0,\cdot) , P^{n+n_k}(x_0,\cdot)) \leq \frac{M_\psi}{ \psi(M)} +
C(\log(n)/n)\parenthese{ M_\psi M / \psi(M)} \eqsp.
\]
Setting $M = n/ \log(n)$, we get that for all $n,k \in \N^*$
\begin{equation}
\label{eq:base_construction_Cauchy}
W_{d}(P^n(x_0,\cdot) , P^{n+n_k}(x_0,\cdot)) \leq C /\psi(n/ \log(n))\eqsp.
\end{equation}
Since $\lim_{x \to \plusinfty} \psi(x) =
\plusinfty$ and $\lim_{k \to \plusinfty } n_k = \plusinfty$
there exists $\{u_k, k \in \N \}$ such that $u_0 =1$ and for $k \geq 1$, $u_{k} = \inf \{ n_l \ | \ l \in \N ; \psi
(n_l/\log(n_l))  \geq 2^{k} \} $.
Set $m_k = \sum_{i=0}^k u_i$.  Since for all $k \in \N$, $m_{k+1} = m_k +
u_{k+1}$, by \eqref{eq:base_construction_Cauchy}, $W_{d}(
P^{m_k}(x_0,\cdot), P^{m_{k+1}}(x_0,\cdot)) \leq C 2^{-k}$, which
implies that the series $\sum_k W_{d}( P^{m_k} (x_0,\cdot),
P^{m_{k+1}}(x_0,\cdot))$ converges and $\{ P^{m_k}(x_0,\cdot) , k \in \N \}$ is a Cauchy sequence in $(\Pens(E),W_{d})$.

Since $(\Pens(E), W_{d})$ is Polish, there
exists $\pi \in \Pens(E)$ such that $\lim_{k \to \plusinfty}
W_{d}(P^{m_k}(x_0,\cdot),\pi) = 0$.  The second step is to prove that $\pi$ is
invariant. Since $\lim_{k \to \plusinfty} W_d(P^{m_k}(x_0,\cdot),\pi ) = 0$, by the triangular inequality
it holds
\begin{align}  \label{eq:preuve_invariance_base}
  W_{d}(\pi, \pi P) \leq \lim_{k \to \plusinfty} W_{d}(P^{m_k}(x_0,\cdot),
  \delta_{x_0}PP^{m_k}) + \lim_{k \to \plusinfty}
  W_{d}(\delta_{x_0}P^{m_k}P,\pi P) \eqsp.
\end{align}
By  \autoref{lem:contract_coupling_weak_contraction}-\eqref{lem:contraction_rev_mu_nu_2} and \eqref{eq:bornealain} , there exists $C$ such that for any $k \geq 1$,
\begin{multline*}
W_{d}(P^{m_k}(x_0,\cdot), \delta_{x_0} P^{m_k+1}) \leq \inf_{ \lambda \in
    \couplage{\delta_{x_0}}{\delta_{x_0}P}} \int_{E \times E} Q^{m_k}d(z,t)\dint \lambda(z,t)\\
   \leq C( \log(m_k) /m_k )  \inf_{ \lambda \in  \couplage{\delta_{x_0}}{\delta_{x_0}P}} \int_{E \times E} \Vl(z,t) \lambda(\rmd z , \rmd t) \leq C(\log(m_k)/m_k)  P \Vl_{x_0} (x_0)  \eqsp.
\end{multline*}
 By definition, $ \lim_k m_k = + \infty$ so that by
\eqref{eq:borne_suite_infini}, the RHS converges to $0$ when $k
\to \plusinfty$. In addition, by
\autoref{lem:contract_coupling_weak_contraction}-\eqref{lem:contract_rev_3},
$W_{d}(\delta_{x_0}P^{m_k}P,\pi P) \leq W_{d}(P^{m_k}(x_0,\cdot),\pi )$, and
this RHS converges to $0$ by definition of $\pi$. Plugging these results in
\eqref{eq:preuve_invariance_base} yields $ W_{d}(\pi,\pi P ) = 0$, and
therefore $\pi P = \pi$.

\subsection{Proof of \autoref{coro:existence_pi}}
\label{subsec:coro:existence_pi}
We prove that the assumptions of \autoref{theo:existence-pi} are satisfied. Set
$\Vl(x,y) = 1+(V(x) +V(y))/\phi(2)$.  Since $Q$ is a coupling for $P$, it holds
\[
Q \Vl(x,y) = 1 + (1/\phi(2) )\left(PV(x) + PV(y) \right) \leq \Vl(x,y)
- \frac{\phi(V(x) +V(y))}{\phi(2)} + (b/\phi(2)) \1_\Delta(x,y) \eqsp.
\]
This yields the drift inequality (\ref{eq:simple-drift-condition}) upon noting
that $\phi$ is increasing and $V \geq 1$ so that $\phi(V(x) +V(y)) /\phi(2)
\geq 1$.  By iterating this inequality, we have for any $\ell$,
\[
\sup_{(x,y) \in \Delta} \{Q^{\ell-1} \Vl(x,y)\} \leq  \sup_{(x,y) \in \Delta}
\{\Vl(x,y) \}  + b (\ell-1)/\phi(2) \eqsp,
\]
and the RHS is finite since by assumption, $\sup_{(x,y) \in \Delta} \{V(x)
+V(y)\} < \infty$.

Let $x_0 \in E$. Under \autoref{hyp:drift_double}($\Delta,\phi,V)$, $PV(x) \leq PV(x) + PV(x_0)
\leq V(x) - \phi \circ V(x) + b +V(x_0)$ where we have used that $\phi (V(x) +
V(x_0)) \geq \phi(V(x))$. This implies that for every $n \in \N^*$, $n^{-1}
\sum_{k=0}^{n-1} P^k (\phi \circ V)(x) \leq b + V(x_0)$ $+V(x)/n$.
For any $x$, we have $P\Vl_{x}(x) < \infty$.  Finally, since $\phi \in
\mathbb{F}$, we can set $\psi = \phi$. Let us define the increasing sequence
$\{n_k, k \in \N\}$.  Set $M_\phi > b+V(x_0)$; there exists an increasing
sequence $\{n_k , k \in \N \}$ such that $\lim_k n_k = + \infty$ and
\begin{equation}
\label{eq:def_nk}
P^{n_k}(\phiV) (x_0) \leq M_\phi \eqsp, \text{ for all } k \in \N \eqsp.
\end{equation}
Finally, \cite[lemma~4.1]{butkovsky:2012} implies  $\int_E \phi \circ V(x)
    \pi(\rmd x) < \infty$.

\subsection{Proof of \autoref{theo:convergence_log_double_drift_1}}
\label{subsec:proof:theo:convergence_log_double_drift_1}
We preface the proof by some preliminary technical results. By using
\autoref{lem:surmartingale2}, for every $x,y \in E$ and $m \geq 0$,
$\expeMarkovTilde{x,y}{d(X_n,Y_n)} \leq (1-\epsilon)^{m} +
\probaMarkovTilde{x,y}[T_m > n]$.  The crux of the proof is to obtain estimates
of tails of the successive return times to $\Delta$.  Following
\cite{Tuominen_subgeo}, we start by considering a sequence of drift conditions
on the product space $E \times E$. For $\Delta \in \B(E \times E)$, $\ell \in
\nset^*$, a sequence of measurable functions $\defEns{\Vl_n, n \in \N }$,
$\Vl_n : E\times E \rightarrow \R_+$, a function $r \in \Lambda$ and a
constant $b < \infty$, let us consider the following assumption:
\begin{assumption*}($\Delta,\ell,\Vl_n,r,b$)
\label{hyp:suite_drift}
For all $x,y \in E$~:
\begin{equation*}
\label{eq:sequence-drift}
Q \Vl_{n+1}(x,y)    \leq \Vl_n(x,y) -r(n) + b r(n) \1_{\Delta} (x,y)\eqsp, \quad \text{and} \quad
\quad \sup_{(x,y) \in \Delta} Q^{\ell-1} \Vl_0(x,y) < \infty \eqsp.
\end{equation*}
\end{assumption*}
%It is checked in \autoref{lem:implication_phi_suite_drift}, that assumption \textbf{A} is implied by \autoref{hyp:drift_double}, with appropriate definitions of the sequence of drift functions $\{\Vl_n, n \in \N\}$, the rate sequence $\{r(n), n \in \N\}$ and the constant $b$.
%We will use
Under \textbf{A}($\Delta,\ell,\Vl_n,r,b$), we first  obtain bounds on the moments
$\expeMarkovTilde{x,y}{R(T_0)}$ for $x,y \in E$ (see
\autoref{propo:temps_darret_fini}), where
\begin{equation}
  \label{eq:definition:R}
  R(t) = 1 + \int_0^t r(s) \rmd s \eqsp, t \geq 0\eqsp.
\end{equation}
%We will then consider separately two cases: these bounds on $R(T_0)$ will provide
%bounds on the moments $R(T_m/m)$ and $R(T_m)$.  To that goal, in the first case
%$R$ is approximated by a convex function; while in the second case $R$ is
%approximated by some geometric sequence. This second approach, despite it
%provides a tight bound when the sequence $\defEns{R(n), n \in \N}$ is of subexponential order
%$\exp(cn^\kappa)$, for $c >0$ and $\kappa \in \ooint{0,1}$, is not appropriate
%when the sequence is of polynomial or logarithmic order. This is the reason why
%we shall always distinguished these two cases.
We will then deduce bounds for $\probaMarkovTilde{x,y}[T_m \geq n]$ (see
\autoref{lem:surmartingale_contraction}). Set
\begin{equation}
\label{eq:definition-c-r}
c_{1,r}= \sup_{k \in \nset^*} R(k) / \sum_{i=0}^{k-1} r(i)  \eqsp,  \quad c_{2,r} = \sup_{m,n \in \N} R(m+n)/\{R(m) R(n) \} \eqsp.
\end{equation}
It follows from \autoref{lem:suite_sous_geometrique} that these constants are
finite.

\begin{proposition}
\label{propo:temps_darret_fini}
Assume that there exist a coupling kernel $Q$ for $P$, $\Delta \in \B(E\times
E)$, $\ell \in \nset^*$, a sequence of measurable functions $\defEns{\Vl_n, n
  \in \N }$, $\Vl_n : E\times E \rightarrow \R_+$, a function $r \in \Lambda$
and a constant $b < \infty$ such that \textbf{A}($\Delta, \ell, \Vl_n,r,b$) is
satisfied. Then, for any $x,y \in E$,
\begin{equation}
\label{eq:moment_temps_darret_fini}
\expeMarkovTilde{x,y}{R(T_0) } \leq
c_{1,r} c_{2,r} R(\ell-1) \{ Q^{\ell-1} \Vl_0(x,y) + b r(0) \} \eqsp,
\end{equation}
and $ \sup_{(z,t) \in \Delta}\expeMarkovTilde{z,t}{R(T_0)}$ is finite.
\end{proposition}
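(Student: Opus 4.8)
The plan is to run the classical Tuominen--Tweedie comparison scheme (as in \cite{Tuominen_subgeo}) for the product chain with kernel $Q$, in three steps.

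\textit{Step 1 (comparison bound for the plain return time).} Let $\tau \defeq \inf\{n\geq 1 : (X_n,Y_n)\in\Delta\}$ be the first return time to $\Delta$. First I would show that for every $(z,t)\in E^2$,
\[
\expeMarkovTilde{z,t}{\sum_{k=0}^{\tau-1} r(k)} \leq \Vl_0(z,t) + b\, r(0)\,\1_{\Delta}(z,t) \eqsp.
\]
There is nothing to prove if $\Vl_0(z,t)=\infty$, so assume it finite and set $u_0 \defeq \Vl_0(z,t)$ and $u_k \defeq \expeMarkovTilde{z,t}{\Vl_k(X_k,Y_k)\,\1_{\{k<\tau\}}}$ for $k\geq1$. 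Using $\1_{\{k+1<\tau\}}\leq\1_{\{k<\tau\}}$, $\Vl_{k+1}\geq0$, the tower property with $\{k<\tau\}\in\filtrationTilde_k$, the Markov property, and the drift inequality of \textbf{A} --- which on $\{k<\tau\}$ with $k\geq1$ reads $Q\Vl_{k+1}(X_k,Y_k)\leq \Vl_k(X_k,Y_k)-r(k)$ because then $(X_k,Y_k)\notin\Delta$, while at $k=0$ it keeps the term $b\,r(0)\,\1_\Delta(z,t)$ --- one obtains $u_{k+1}\leq u_k - \expeMarkovTilde{z,t}{r(k)\,\1_{\{k<\tau\}}}$ for $k\geq1$ and $u_1\leq u_0 - r(0) + b\,r(0)\,\1_\Delta(z,t)$. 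Summing over $k=0,\dots,N-1$, dropping $u_N\geq0$, rewriting $\sum_{k=0}^{N-1}\expeMarkovTilde{z,t}{r(k)\,\1_{\{k<\tau\}}}=\expeMarkovTilde{z,t}{\sum_{k=0}^{(\tau\wedge N)-1}r(k)}$, and letting $N\to\infty$ by monotone convergence gives the displayed bound.

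\textit{Step 2 (from $\tau$ to $R(\tau)$).} By the definition of $c_{1,r}$ in \eqref{eq:definition-c-r}, $R(k)\leq c_{1,r}\sum_{i=0}^{k-1}r(i)$ for every $k\in\N^*$; applying this at $k=\tau$ (legitimate since $\tau\geq1$), taking $\expeMarkovTilde{z,t}{\cdot}$, and inserting Step~1 together with $\1_\Delta\leq1$ yields
\[
\expeMarkovTilde{z,t}{R(\tau)} \leq c_{1,r}\bigl(\Vl_0(z,t) + b\,r(0)\bigr) \eqsp, \qquad (z,t)\in E^2 \eqsp.
\]

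\textit{Step 3 (reinstating the $\ell-1$ delay).} One has $T_0 = (\ell-1) + \tau\circ\theta^{\ell-1}$, and the definition of $c_{2,r}$ gives $R(m+n)\leq c_{2,r}R(m)R(n)$ for all $m,n\in\N$; hence $R(T_0)\leq c_{2,r}\,R(\ell-1)\,R(\tau\circ\theta^{\ell-1})$. Conditioning on $\filtrationTilde_{\ell-1}$, the Markov property at time $\ell-1$ gives $\expeMarkovTilde{x,y}{R(\tau\circ\theta^{\ell-1})\sachant{\filtrationTilde_{\ell-1}}}=\expeMarkovTilde{X_{\ell-1},Y_{\ell-1}}{R(\tau)}$, which by Step~2 is at most $c_{1,r}(\Vl_0(X_{\ell-1},Y_{\ell-1})+b\,r(0))$. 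Since $R(\ell-1)$ is deterministic, taking expectations and using $\expeMarkovTilde{x,y}{\Vl_0(X_{\ell-1},Y_{\ell-1})}=Q^{\ell-1}\Vl_0(x,y)$ and the fact that $Q^{\ell-1}$ fixes constants yields exactly \eqref{eq:moment_temps_darret_fini}. The constants $c_{1,r},c_{2,r}$ are finite by \autoref{lem:suite_sous_geometrique}, and then $\sup_{(z,t)\in\Delta}\expeMarkovTilde{z,t}{R(T_0)}<\infty$ follows from \eqref{eq:moment_temps_darret_fini} and the second half of \textbf{A}, namely $\sup_{(z,t)\in\Delta}Q^{\ell-1}\Vl_0(z,t)<\infty$.

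I expect the main obstacle to be organizational rather than conceptual: setting up the decomposition $T_0=(\ell-1)+\tau\circ\theta^{\ell-1}$ and the associated conditioning at time $\ell-1$ precisely, and in Step~1 handling the $k=0$ boundary term with care (there the $\Delta$-contribution of the drift survives) while running the monotone-convergence passage on the truncated partial sums $\sum_{k=0}^{(\tau\wedge N)-1}r(k)$ rather than on $R(\tau)$ directly --- since a priori $\expeMarkovTilde{z,t}{R(\tau)}$ may be infinite for $(z,t)$ far from $\Delta$.
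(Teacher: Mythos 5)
Your proof is correct and follows essentially the same route as the paper's: the comparison bound $\expeMarkovTilde{z,t}{\sum_{k=0}^{\tau-1} r(k)} \leq \Vl_0(z,t)+b\,r(0)$, the conversion to $R(\tau)$ via $c_{1,r}$, and the treatment of the $\ell-1$ delay via $c_{2,r}$ and the Markov property. The only difference is that your Step~1 re-derives explicitly the comparison inequality which the paper simply cites as \cite[Proposition~11.3.2]{bible}, and your handling of the $k=0$ term and the truncation/monotone-convergence passage there is sound.
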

\begin{proof}
  By \cite[Proposition~11.3.2]{bible},
  $\expeMarkovTilde{x,y}{\sum_{k=0}^{\tau_\Delta-1} r(k)} \leq \Vl_0(x,y) + b
  r(0)$, where $\tau_\Delta$ is the return time to $\Delta$.  Since $R(k) \leq
  c_{1,r} \sum_{p=0}^{k-1} r(p)$, the previous inequality provides a bound on
  $\expeMarkovTilde{x,y}{R(\tau_\Delta)}$.  The conclusion follows from the
  Markov property upon noting that $R(T_0) \leq c_{2,r} R(\ell-1)
  R(\tau_\Delta \circ \theta^{\ell-1} )  $.
\end{proof}
Combining the strong Markov property, (\ref{eq:def_T_m}) and
\autoref{propo:temps_darret_fini}, it is easily seen that
$\expeMarkovTilde{x,y}{T_m }<\infty$ for any $m \geq 0$ and $x,y \in E$. This
yields the following result.
\begin{corollary}
\label{coro:temps_darret_fini}
Assume that there exist a coupling kernel $Q$ for $P$, $\Delta \in \B(E\times
E)$, $\ell \in \nset^*$, a sequence of measurable functions $\defEns{\Vl_n, n
  \in \N }$, $\Vl_n : E\times E \rightarrow \R_+$, a function $r \in \Lambda$
and a constant $b < \infty$ such that \textbf{A}($\Delta, \ell, \Vl_n,r,b$) is
satisfied. Then, for all $j \geq0$ and $(x,y) \in E \times E$,
$\probaMarkovTilde{x,y}[T_j < \infty] = 1$.
\end{corollary}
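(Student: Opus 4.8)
The statement is an essentially immediate consequence of \autoref{propo:temps_darret_fini} and the recursion \eqref{eq:def_T_m}, so the plan is short. The first step is to record that the rate function $R$ of \eqref{eq:definition:R} is unbounded: since $r \in \Lambda$ is comparable at infinity to some $r_0 \in \Lambda_0$ and $r_0$ takes values in $\coint{2,+\infty}$, we get $\liminf_{t\to+\infty} r(t) \geq 2\,\liminf_{t\to+\infty} r(t)/r_0(t) > 0$, hence $\int_0^{+\infty} r(s)\,\dint s = +\infty$ and $R(t) \to +\infty$. Writing $R(+\infty) \defeq \lim_{t\to+\infty}R(t) = +\infty$, the bound \eqref{eq:moment_temps_darret_fini} of \autoref{propo:temps_darret_fini} gives $\expeMarkovTilde{x,y}{R(T_0)} < \infty$ for every $(x,y)\in E\times E$, which forces $\probaMarkovTilde{x,y}[T_0 = +\infty] = 0$. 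This establishes the case $j=0$, and does so from an arbitrary initial point $(x,y)$ (in particular from every point of $\Delta$).

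Next I would induct on $j$. Note first that each $T_j$ is a $\{\filtrationTilde_n\}$-stopping time: $T_0$ is a hitting time, and the property propagates through \eqref{eq:def_T_m}, being a sum of a stopping time and a shifted stopping time. Suppose $\probaMarkovTilde{x,y}[T_{j-1} < \infty] = 1$ for all $(x,y)\in E\times E$. On the full-probability event $\{T_{j-1} < \infty\}$ the shift $\theta^{T_{j-1}}$ is well defined, and the strong Markov property at $T_{j-1}$ gives
\[
\probaMarkovTilde{x,y}[\, T_0\circ\theta^{T_{j-1}} < \infty \mid \filtrationTilde_{T_{j-1}}\,] = \probaMarkovTilde{X_{T_{j-1}},Y_{T_{j-1}}}[T_0 < \infty] = 1 \qquad \probaMarkovTilde{x,y}\text{-a.s.,}
\]
the last equality being the case $j=0$ applied from the (random) point $(X_{T_{j-1}},Y_{T_{j-1}})$. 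Since $T_j = T_{j-1} + T_0\circ\theta^{T_{j-1}}$ with $T_{j-1} < \infty$ a.s., we conclude $\probaMarkovTilde{x,y}[T_j < \infty] = 1$, which closes the induction.

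There is no real obstacle here; the only points meriting care are that $R$ is genuinely unbounded — which is precisely where the defining properties of $\Lambda$ and $\Lambda_0$ are used, the same ingredient behind the finiteness of $c_{1,r},c_{2,r}$ in \eqref{eq:definition-c-r} — and that $\theta^{T_{j-1}}$ is applied only on an event of full measure, so that the strong Markov property is legitimately invoked. The same computation in fact yields the stronger $\expeMarkovTilde{x,y}{T_m} < \infty$ noted just before the statement: when $r$ is bounded below at infinity one has $T_0 \leq C\,R(T_0) + C'$ for suitable constants, so \eqref{eq:moment_temps_darret_fini} controls $\expeMarkovTilde{x,y}{T_0}$, and then \eqref{eq:def_T_m} together with the strong Markov property controls $\expeMarkovTilde{x,y}{T_m}$.
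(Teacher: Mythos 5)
Your proof is correct and follows essentially the same route as the paper, which obtains the corollary by combining \autoref{propo:temps_darret_fini}, the recursion \eqref{eq:def_T_m} and the strong Markov property. The only (inessential) difference is packaging: the paper passes through $\expeMarkovTilde{x,y}{T_m}<\infty$ and deduces almost-sure finiteness, whereas you induct directly on almost-sure finiteness of $T_j$ (and correctly note that $R(t)\to\infty$, resp.\ $R$ grows at least linearly, is the point where the properties of $\Lambda$ and $\Lambda_0$ enter), then recover the moment bound as a remark.
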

For $r \in \Lambda$, there exists $r_0 \in \Lambda_0$ such that
$c_{3,r}= 1 \vee \sup_{t \geq 0} r(t)/r_0(t) < \infty$ and $c_{4,r}= 1 \vee \sup_{t
  \geq 0} r_0(t)/r(t) < \infty$. Denote $c_{5,r} = \sup_{t,u \in
  \R_+} \ r(t+u)/\{r(t)r(u)\}$ and define for $\kappa >0$, the
real $M_\kappa$ such that for all $t \geq M_\kappa$, $r(t) \leq \kappa
R(t)$. $M_\kappa$ is well defined by
\autoref{lem:suite_sous_geometrique}-\eqref{lem:suite_sous_geo3}.
\begin{lemma}
\label{lem:surmartingale_contraction}
Assume that there exist a coupling kernel $Q$ for $P$, $\Delta \in \B(E\times
E)$, $\ell \in \nset^*$, a sequence of measurable functions $\defEns{\Vl_n, n
  \in \N }$, $\Vl_n : E\times E \rightarrow \R_+$, a function $r \in \Lambda$
and constants $\epsilon>0$, $b < \infty$ such that
\autoref{hyp:small-set}($\Delta,\ell,\epsilon$) and \textbf{A}($\Delta, \ell,
\Vl_n,r,b$) are satisfied.  Then,
\begin{enumerate}[(i)]
\item
 \label{item:surmartingale_contraction_1}
 For all $x,y \in E$ and for all $n\in \N$,$m \in \nset^*$,
 \begin{equation*}
 \probaMarkovTilde{x,y}[T_m \geq n] \leq    \{ a_1 Q^{\ell-1}\Vl_0(x,y)+ a_2 \} / R(n/2) +  a_3/R(n/(2m)) \eqsp.
 \end{equation*}
\item  \label{item:surmartingale_contraction_2}
 For all $\kappa >0$,  for  all $x,y \in E$ and for all $n,m \in \N$,
\begin{equation*}
\probaMarkovTilde{x,y}[T_m \geq n]\leq   (1+b_1 \kappa)^m \{ \kappa^{-1} r(M_\kappa) + a_1 Q^{\ell-1}\Vl_0(x,y) + a_2 \} / {R(n)}  \eqsp,
\end{equation*}
\end{enumerate}
The constants $\{a_i\}_{i=1}^3,b_1$ can be directly obtained from the proof.
\end{lemma}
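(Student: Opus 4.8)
The plan is to deduce both bounds from the block decomposition \eqref{eq:def_T_m} of $T_m$, the strong Markov property, and the moment estimate of \autoref{propo:temps_darret_fini}, converting moment bounds into tail bounds by the Markov inequality applied to the rate function $R$ of \eqref{eq:definition:R}. Throughout I would write $\bar R := \sup_{(z,t)\in\Delta}\widetilde{\mathbb{E}}_{z,t}[R(T_0)]$, which is finite by \autoref{propo:temps_darret_fini}, and recall from \autoref{coro:temps_darret_fini} that $T_j<\infty$ a.s.\ for every $j$. I would also fix $r_0\in\Lambda_0$ realising the constants $c_{3,r},c_{4,r}$ and set $R_0(t)=1+\int_0^t r_0(s)\,\rmd s$; since $r_0$ is non-decreasing, $R_0$ is convex and strictly increasing, and from $c_{4,r}^{-1}r_0\le r\le c_{3,r}r_0$ one gets $c_{4,r}^{-1}R_0\le R\le c_{3,r}R_0$ on $\R_+$. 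A first observation, used throughout, is that iterating \eqref{eq:def_T_m} together with the identity $T_k\circ\theta^{T_0}+T_0=T_{k+1}$ yields $T_m-T_0=T_{m-1}\circ\theta^{T_0}$, and that each summand $T_0\circ\theta^{T_k}$ is, conditionally on $\filtrationTilde_{T_k}$, distributed as $T_0$ started from $(X_{T_k},Y_{T_k})\in\Delta$, hence has $R$-moment at most $\bar R$.

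For the first assertion I would use the inclusion $\{T_m\ge n\}\subseteq\{T_0\ge n/2\}\cup\{T_m-T_0\ge n/2\}$. For the first event, the Markov inequality and \autoref{propo:temps_darret_fini} give $\widetilde{\mathbb{P}}_{x,y}[T_0\ge n/2]\le\widetilde{\mathbb{E}}_{x,y}[R(T_0)]/R(n/2)\le\{a_1 Q^{\ell-1}\Vl_0(x,y)+a_2\}/R(n/2)$ with $a_1=c_{1,r}c_{2,r}R(\ell-1)$ and $a_2=a_1\,b\,r(0)$. For the second event, conditioning on $\filtrationTilde_{T_0}$ and invoking the strong Markov property reduces matters to bounding $\widetilde{\mathbb{P}}_{z,t}[T_{m-1}\ge n/2]$ uniformly over $(z,t)\in\Delta$. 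Under $\widetilde{\mathbb{P}}_{z,t}$, $T_{m-1}$ is a sum of $m$ terms each of $R$-moment at most $\bar R$ (strong Markov and \autoref{propo:temps_darret_fini}), so Jensen's inequality for the convex function $R_0$ with uniform weights gives $\widetilde{\mathbb{E}}_{z,t}[R_0(T_{m-1}/m)]\le c_{4,r}\bar R$, and then the Markov inequality together with $R_0\ge c_{3,r}^{-1}R$ yields $\widetilde{\mathbb{P}}_{z,t}[T_{m-1}\ge n/2]=\widetilde{\mathbb{P}}_{z,t}[R_0(T_{m-1}/m)\ge R_0(n/(2m))]\le c_{3,r}c_{4,r}\bar R/R(n/(2m))$. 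Taking expectations over $\filtrationTilde_{T_0}$ then gives the claim with $a_3=c_{3,r}c_{4,r}\bar R$.

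For the second assertion I would bound $E_m:=\widetilde{\mathbb{E}}_{x,y}[R(T_m)]$ by a one-step recursion and conclude via $\widetilde{\mathbb{P}}_{x,y}[T_m\ge n]\le E_m/R(n)$. Writing $T_m=T_{m-1}+T_0\circ\theta^{T_{m-1}}$ and integrating the elementary inequality $R(a+s)\le R(a)+c_{5,r}\,r(a)\,R(s)$ (obtained from $R(a+s)-R(a)=\int_0^s r(a+w)\,\rmd w$ and $r(a+w)\le c_{5,r}r(a)r(w)$) against the conditional law of $T_0\circ\theta^{T_{m-1}}$, whose $R$-moment is at most $\bar R$, I would get $\widetilde{\mathbb{E}}_{x,y}[R(T_m)\mid\filtrationTilde_{T_{m-1}}]\le R(T_{m-1})+c_{5,r}\bar R\,r(T_{m-1})$. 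Splitting on whether $T_{m-1}\ge M_\kappa$ and using $r(t)\le\kappa R(t)$ for $t\ge M_\kappa$ (the defining property of $M_\kappa$) and $r(t)\le c_{3,r}c_{4,r}r(M_\kappa)$ for $t<M_\kappa$ (monotonicity of $r_0$), this becomes $\widetilde{\mathbb{E}}_{x,y}[R(T_m)\mid\filtrationTilde_{T_{m-1}}]\le(1+b_1\kappa)R(T_{m-1})+b_1 r(M_\kappa)$, with $b_1$ a constant multiple of $c_{5,r}\bar R$. Taking expectations gives $E_m\le(1+b_1\kappa)E_{m-1}+b_1 r(M_\kappa)$; solving this linear recursion with $E_0\le a_1 Q^{\ell-1}\Vl_0(x,y)+a_2$ and $\sum_{j=0}^{m-1}(1+b_1\kappa)^j\le(1+b_1\kappa)^m/(b_1\kappa)$ yields, after a harmless rescaling of the constants, $E_m\le(1+b_1\kappa)^m\{\kappa^{-1}r(M_\kappa)+a_1 Q^{\ell-1}\Vl_0(x,y)+a_2\}$, which is the claim.

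The delicate step is the second assertion: one must show that concatenating one more excursion to the coupling set costs only a multiplicative factor $1+b_1\kappa$ — which can be made arbitrarily close to $1$ at the price of the additive correction $\kappa^{-1}r(M_\kappa)$ — and this hinges entirely on the subgeometric nature of $r$, concretely on the finiteness of $c_{5,r}$ and on the slow-growth property $r(t)\le\kappa R(t)$ for $t\ge M_\kappa$, both furnished by \autoref{lem:suite_sous_geometrique}. The only subtlety in the first assertion is that Jensen's inequality requires a convex rate, which is why one replaces $r$ by the non-decreasing comparable rate $r_0$.
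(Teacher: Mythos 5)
Your proof is correct and follows essentially the same route as the paper's: for \eqref{item:surmartingale_contraction_1} you split $\{T_m\geq n\}$ at $T_0$, apply \autoref{propo:temps_darret_fini} and the Markov inequality, and use Jensen's inequality with the convexified rate $R_0$ on the block decomposition of the remaining return times (the paper bounds $\expeMarkovTilde{x,y}{R_0((T_m-T_0)/m)}$ directly rather than conditioning on $\filtrationTilde_{T_0}$, but this is the same computation and yields the same constants $a_1,a_2,a_3$), and for \eqref{item:surmartingale_contraction_2} you derive exactly the paper's recursion $\expeMarkovTilde{x,y}{R(T_m)} \leq (1+b_1\kappa)\expeMarkovTilde{x,y}{R(T_{m-1})} + b_1 r(M_\kappa)$ from $R(t+u)\leq R(t)+c_{5,r}r(t)R(u)$, the strong Markov property and the defining property of $M_\kappa$, then conclude by Markov's inequality. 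The only (harmless) refinement is your explicit handling of $r(T_{m-1})$ on $\{T_{m-1}<M_\kappa\}$ through the monotone comparison function $r_0$, where the paper implicitly uses $r(t)\leq r(M_\kappa)$; this merely rescales $b_1$, which the lemma leaves unspecified.
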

\begin{proof}
  Since $r \in \Lambda$, there exists $r_0 \in \Lambda_0$ such that $c_{3,r} +
  c_{4,r} < \infty$. Denote by $R_0$ the function (\ref{eq:definition:R})
  associated with $r_0$.
\begin{align}
\nonumber
 \probaMarkovTilde{x,y}[T_m \geq n] &\leq   \probaMarkovTilde{x,y}[T_0 \geq n/2] + \probaMarkovTilde{x,y}[T_m- T_0 \geq n/2]\\
\nonumber
& \leq \expeMarkovTilde{x,y}{R(T_0)}/R(n/2) +  \expeMarkovTilde{x,y}{R_0((T_m-T_0)/m)} / R_0(n/(2m)) \\
 \label{eq:bon_poly_log_2}
& \leq \{a_1 Q^{\ell-1} \Vl_0(x,y) + a_2 \}/R(n/2) +  c_{3,r} \expeMarkovTilde{x,y}{R_0((T_m-T_0)/m)} / R(n/(2m)) \eqsp,
 \end{align}
 where we used \autoref{propo:temps_darret_fini} in the last inequality, and
 $a_1 = c_{1,r} c_{2,r} R(\ell-1)$; $a_2 = a_1 b r(0)$. Since $R_0$ is convex
 (see~\autoref{lem:suite_sous_geometrique}), we have by \eqref{eq:def_T_m}:
\begin{equation*}
\expeMarkovTilde{x,y}{R_0((T_m-T_0)/m)}
% &\leq  \inv{m+1} \expeMarkovTilde{x,y}{  R_0(T_0) +
%    \sum_{k=0}^{m-1} R_0(\tau_\Delta \circ \theta^{T_k+\ell -1} + \ell -1)} \\
 \leq  c_{4,r}m^{-1} \expeMarkovTilde{x,y}{ \sum_{k=0}^{m-1} R(T_0 \circ \theta^{T_k})} \eqsp.
\end{equation*}
Using \autoref{coro:temps_darret_fini} and the strong Markov property,  for any $x,y \in E$ and $m \geq 1$,
\begin{equation}
 \label{eq:bon_poly_log}
\expeMarkovTilde{x,y}{R_0((T_m-T_0)/m)}
\leq c_{4,r}  C_\Delta    \eqsp, \qquad \text{with} \ C_\Delta = \sup_{(x,y)
    \in \Delta}\expeMarkovTilde{x,y}{R(T_0)} \eqsp.
\end{equation}
Plugging \eqref{eq:bon_poly_log} in \eqref{eq:bon_poly_log_2} implies
\eqref{item:surmartingale_contraction_1} with $a_3 = c_{3,r} c_{4,r} C_\Delta $.
We now consider \eqref{item:surmartingale_contraction_2}.  Again by the Markov
inequality, since $R$ is increasing,
 \begin{equation}
   \label{eq:bon_sous_expo_2}
 \probaMarkovTilde{x,y}[T_m \geq n] \leq R^{-1}(n)
 \expeMarkovTilde{x,y}{R(T_m)} \eqsp.
 \end{equation}
 If $m =0$, the result follows from \autoref{propo:temps_darret_fini}. If $m \geq 1$,
 using the definitions  of $T_m$ and $R$, given respectively in \eqref{eq:def_T_m} and \eqref{eq:definition:R}, and
since for all $t,u \in \rset_+$, $R(t + u) \leq R(t) + c_{5,r}R(u)r(t)$, we get
 \[
 \expeMarkovTilde{x,y}{R(T_m)} \leq \expeMarkovTilde{x,y}{R(T_{m-1})} +
 c_{5,r} \expeMarkovTilde{x,y}{r(T_{m-1}) R(T_0 \circ \theta^{T_{m-1}})} \eqsp.
 \]
 Thus, by the strong Markov property
 \begin{equation}
   \label{eq:bon_sous_expo_1}
 \expeMarkovTilde{x,y}{R(T_m)} \leq \expeMarkovTilde{x,y}{R(T_{m-1})} + c_{5,r} C_\Delta
 \expeMarkovTilde{x,y}{r(T_{m-1})} \eqsp.
 \end{equation}
 Let $\kappa>0$. Since by definition, for all $t \geq M_\kappa$, $r(t) \leq \kappa R(t)$,
$ \expeMarkovTilde{x,y}{r(T_{m-1})} \leq r(M_\kappa) + \kappa \expeMarkovTilde{x,y}{R(T_{m-1})}$,
so that \eqref{eq:bon_sous_expo_1} becomes
\[
 \expeMarkovTilde{x,y}{R(T_m)} \leq (1+c_{5,r}C_\Delta  \kappa) \expeMarkovTilde{x,y}{R(T_{m-1})}
 + c_{5,r}C_\Delta  r(M_\kappa) \eqsp.
 \]
By a straightforward induction  we get,
\begin{align*}
  \expeMarkovTilde{x,y}{R(T_m)} & \leq  (1+ c_{5,r}C_\Delta \kappa )^m
  (\expeMarkovTilde{x,y}{R(T_0)} + r(M_\kappa)/\kappa) \eqsp.
 \end{align*}
 Plugging this result in \eqref{eq:bon_sous_expo_2} and using
 \autoref{propo:temps_darret_fini} concludes the proof. Note that $b_1= c_{5,r}
 C_\Delta$ and $a_2= c_{1,r}c_{2,r}R(\ell-1)b r(0) $.
\end{proof}
\begin{lemma}
\label{lem:itere_couplage}
Assume that there exist a coupling kernel $Q$ for $P$, $\Delta \in \B(E\times
E)$, $\ell \in \nset^*$, a sequence of measurable functions $\defEns{\Vl_n, n
  \in \N }$, $\Vl_n : E\times E \rightarrow \R_+$, a function $r \in \Lambda$
and constants $\epsilon>0$, $b < \infty$ such that
\autoref{hyp:small-set}($\Delta,\ell,\epsilon$) and \textbf{A}($\Delta, \ell,
\Vl_n,r,b$) are satisfied.  Then,
\begin{enumerate}[(i)]
\item
\label{eq:distance_iteration}
For all $x,y \in E$ and $n \in \N$,
\begin{equation*}
\expeMarkovTilde{x,y}{d(X_n,Y_n)} \leq  1/R(n)+ \{ a_1Q^{\ell-1} \Vl_0(x,y) + a_2 \}/R(n/2)   +  a_3 v^{-1}_n
\eqsp,
\end{equation*}
where $v_n\eqdef R(-n \log(1-\epsilon) /\{2 (\log(R(n))-\log(1-\epsilon))\}) $.
\item \label{eq:distance_iteration_2} For all $\delta \in \ooint{0,1}$, $x,y
  \in E$ and $n \in \N$,
\begin{equation*}
\expeMarkovTilde{x,y}{d(X_n,Y_n)}\leq  \left(1+ (1+b_1 \kappa) \{ \kappa^{-1} r(M_\kappa) + a_1 Q^{\ell-1} \Vl_0(x,y) + b_2 \} \right)/R^\delta(n) \eqsp,
\end{equation*}
where $\kappa = ((1-\epsilon)^{-(1-\delta)/\delta}-1)/b_1$.
\end{enumerate}
The constants $a_i, b_j$ are given by \autoref{lem:surmartingale_contraction}.
\end{lemma}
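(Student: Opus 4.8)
The plan is to start from the supermartingale estimate of \autoref{lem:surmartingale2}, $\expeMarkovTilde{x,y}{d(X_n,Y_n)} \leq (1-\epsilon)^{m} + \probaMarkovTilde{x,y}[T_m \geq n]$, valid for all $m \geq 0$, and to substitute the return-time tail bounds of \autoref{lem:surmartingale_contraction}, choosing $m = m(n)$ so as to balance the geometric term $(1-\epsilon)^m$ against the (sub)geometric term coming from the tails of $T_m$.

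For \eqref{eq:distance_iteration}, insert \autoref{lem:surmartingale_contraction}-\eqref{item:surmartingale_contraction_1} to get
\[
\expeMarkovTilde{x,y}{d(X_n,Y_n)} \leq (1-\epsilon)^{m} + \{a_1 Q^{\ell-1}\Vl_0(x,y)+a_2\}/R(n/2) + a_3/R(n/(2m))\eqsp,
\]
and take $m = \lceil -\log R(n)/\log(1-\epsilon)\rceil$. Since $\log(1-\epsilon)<0$, this forces $(1-\epsilon)^m\leq 1/R(n)$, which is the first term of the claim. Moreover $m \leq (\log R(n)-\log(1-\epsilon))/(-\log(1-\epsilon))$, so $n/(2m)\geq -n\log(1-\epsilon)/\{2(\log R(n)-\log(1-\epsilon))\}$, and since $R$ is non-decreasing $R(n/(2m))\geq v_n$, whence $a_3/R(n/(2m))\leq a_3 v_n^{-1}$; the middle term is left unchanged. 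It remains to note that $m\geq 1$ for $n\geq 1$ (as $R(n)>1$ there, $r$ being positive) so that \autoref{lem:surmartingale_contraction}-\eqref{item:surmartingale_contraction_1} applies, the case $n=0$ being trivial since $d\leq 1$ and $R(0)=1$.

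For \eqref{eq:distance_iteration_2}, fix $\delta\in\ooint{0,1}$ and use \autoref{lem:surmartingale_contraction}-\eqref{item:surmartingale_contraction_2} instead:
\[
\expeMarkovTilde{x,y}{d(X_n,Y_n)} \leq (1-\epsilon)^{m} + (1+b_1\kappa)^m\{\kappa^{-1}r(M_\kappa)+a_1 Q^{\ell-1}\Vl_0(x,y)+a_2\}/R(n)\eqsp.
\]
Here take $m = \lceil -\delta\log R(n)/\log(1-\epsilon)\rceil$, which gives $(1-\epsilon)^m\leq R^{-\delta}(n)$. Bounding $m\leq 1-\delta\log R(n)/\log(1-\epsilon)$ and using $\log(1+b_1\kappa)>0$ yields $(1+b_1\kappa)^m\leq (1+b_1\kappa)\,R(n)^{\delta\log(1+b_1\kappa)/(-\log(1-\epsilon))}$; the exponent equals $1-\delta$ precisely when $1+b_1\kappa=(1-\epsilon)^{-(1-\delta)/\delta}$, i.e. for the stated value $\kappa=((1-\epsilon)^{-(1-\delta)/\delta}-1)/b_1$. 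Then $(1+b_1\kappa)^m/R(n)\leq (1+b_1\kappa)R^{-\delta}(n)$, and summing the two contributions gives the announced bound $\bigl(1+(1+b_1\kappa)\{\kappa^{-1}r(M_\kappa)+a_1 Q^{\ell-1}\Vl_0(x,y)+a_2\}\bigr)R^{-\delta}(n)$ (so $b_2=a_2$).

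The computations are elementary; the main point requiring care is the bookkeeping around the ceiling, namely checking $m\geq 1$ so that the tail estimates of \autoref{lem:surmartingale_contraction} are applicable, and exploiting the monotonicity of $R$ to pass from $n/(2m)\geq\cdots$ to the explicit argument defining $v_n$. The rest is just substitution of the constants $a_i,b_j$ furnished by \autoref{lem:surmartingale_contraction}.
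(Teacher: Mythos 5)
Your proposal is correct and follows essentially the same route as the paper: combine \autoref{lem:surmartingale2} with \autoref{lem:surmartingale_contraction}, choosing $m = \lceil -\log R(n)/\log(1-\epsilon)\rceil$ for part \eqref{eq:distance_iteration} and $m = \lceil -\delta\log R(n)/\log(1-\epsilon)\rceil$ with $1+b_1\kappa = (1-\epsilon)^{-(1-\delta)/\delta}$ for part \eqref{eq:distance_iteration_2}. The only cosmetic difference is that in \eqref{eq:distance_iteration_2} you bound $(1+b_1\kappa)^m$ via the ceiling inequality $m \leq 1-\delta\log R(n)/\log(1-\epsilon)$, whereas the paper uses the minimality of $m$ (i.e. $R(n)^{-\delta}\leq(1-\epsilon)^{m-1}$); these are equivalent elementary manipulations, and your identification $b_2=a_2$ is consistent with the paper.
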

\begin{proof}
  By \autoref{lem:surmartingale2} and
  \autoref{lem:surmartingale_contraction}-\eqref{item:surmartingale_contraction_1},
  there exist $\{ a_i \}^3_{i=1}$ such that for all $x,y$ in $E$ and for all $n \geq 0$ and
  $m\geq 0$
  \begin{align*}
 &   \expeMarkovTilde{x,y}{d(X_n,Y_n)} \leq (1-\epsilon)^{m} + \probaMarkovTilde{x,y}[T_m \geq n]\\
    & \qquad \leq (1-\epsilon)^{m} +  \{ a_1 Q^{\ell-1}\Vl_0(x,y)+ a_2 \} / R(n/2) +  a_3/R(n/(2m))
 \eqsp.
  \end{align*}
  We get the first inequality by choosing $m = \lceil -
  \log(R(n))/\log(1-\epsilon) \rceil$.  Let us prove
  \eqref{eq:distance_iteration_2}.  Fix $\delta \in \ooint{0,1}$ and choose the
  smallest integer $m$ such that  $ (1-\epsilon)^{m} \leq R(n)^{-\delta}$
  (\ie\ $m= \lceil - \delta \log R(n) / \log(1-\epsilon) \rceil$).  Apply
  \autoref{lem:surmartingale_contraction}-\eqref{item:surmartingale_contraction_2},
  with $\kappa >0$ such that $(1+b_1 \kappa) = (1-\epsilon)^{-
    ((1-\delta)/\delta)}$; hence, upon noting that $R(n)^{-\delta} \leq
  (1-\epsilon)^{m-1}$, it holds
  \begin{multline*}
    (1 + b_1 \kappa)^m = (1 + b_1 \kappa) \left\{(1-\epsilon)^{m-1} \right\}^{-
      ((1-\delta)/\delta)} \leq (1 + b_1 \kappa) \left\{ R(n)^{-\delta}
    \right\}^{- ((1-\delta)/\delta)} = (1 + b_1 \kappa) R(n)^{1-\delta} \eqsp.
  \end{multline*}
\end{proof}

We now prove that \autoref{hyp:drift_double}($\Delta,\phi,V$) implies
\textbf{A}. For a function $\phi \in \mathbb{F}$ and a measurable function $V:E
\to \coint{1,\infty}$, set \begin{equation}
\label{eq:r_phi}
r_\phi(t) = (H_\phi ^{\inv})'(t) = \phi (H_\phi^{\inv}(t)) \eqsp,
\end{equation}
where $H_\phi$ is defined in \eqref{eq:defHphi} and $H_\phi^\inv$ denotes its
inverse; and define for $k \geq 0$, $H_k: [1, \infty) \to \R_+$ and $\Vl_k: E
\times E \to \R_+$ by
\begin{align}
  H_k(u) & = \int_0 ^{H_\phi(u) } r_\phi (t+k) \dint t = H_\phi ^{\inv} (H_\phi(u)+k) - H_\phi ^{\inv} (k) \eqsp, \label{eq:definition:Hk} \\
%% V_k &= H_k \circ V \\
%% \Vl(x,y) & = V(x) + V(y)\\
\Vl_k(x,y) &= H_k\left(V(x)+V(y) \right) \eqsp. \label{eq:definition:Vlk}
\end{align}
Note that $\Vl_k$ is measurable, $H_k$ is twice continuously differentiable on $[1, \infty)$ and  that  $H_0(x) \leq x $ so $\Vl_0(x,y) \leq  V(x)+V(y)$.  %% Note that by
%% \autoref{lem:phiHphi}-\eqref{item:2phiHphi} $r_\phi \in C^1(\R_+)$ and
%% therefore $H_k \in C^2(\coint{1,\plusinfty})$.
 The proof of the following lemma is adapted from \cite[Proposition~2.1]{douc:fort:moulines:soulier:2004}.
\begin{lemma}
  \label{lem:eqhyp1}
  Assume that there exist $\Delta \in \B(E\times
  E)$, a function $\phi \in \mathbb{F}$ and a measurable function $V: E \to
  \coint{1,\infty}$ such that \autoref{hyp:drift_double}($\Delta,\phi,V$) is
  satisfied.  For any $x,y \in E$ and any coupling $\lambda \in
  \couplage{P(x,\cdot)}{P(y,\cdot)}$ we have:
  \[
  \int_{E \times E} \Vl_{k+1} (z,t) \dint \lambda (z,t) \leq \Vl_k(x,y) -
  r_\phi(k) + \frac{b}{r_\phi(0)} \,  r_\phi(k+1) \1_{\Delta}(x,y) \eqsp,
  \]
  where $r_\phi$ and $\Vl_k$ are defined in \eqref{eq:r_phi} and
  \eqref{eq:definition:Vlk} respectively.
\end{lemma}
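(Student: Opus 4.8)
The plan is to derive the inequality from three ingredients: the functions $H_k$ of \eqref{eq:definition:Hk} are concave and increasing, $\lambda$ has the correct marginals so Jensen applies, and $\phi$ being increasing yields $H_\phi(v-\phi(v))\le H_\phi(v)-1$. \textbf{Step 1 (shape of $H_k$).} Using \eqref{eq:defHphi}--\eqref{eq:r_phi}, differentiate the second form of \eqref{eq:definition:Hk}: since $H_\phi'(u)=1/\phi(u)$ and $\phi(u)=r_\phi(H_\phi(u))$, one gets $H_k'(u)=r_\phi\big(H_\phi(u)+k\big)/r_\phi\big(H_\phi(u)\big)>0$, so $H_k$ is increasing. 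Moreover $(\log r_\phi)'(s)=\phi'\big(H_\phi^{\inv}(s)\big)$, which is non-increasing in $s$ because $\phi$ is concave and $H_\phi^{\inv}$ increasing; hence $s\mapsto r_\phi(s+k)/r_\phi(s)$ is non-increasing, $H_k'$ is non-increasing, and $H_k$ is concave on $[1,\infty)$. Finally $H_\phi(1)=0$ and $r_\phi(0)=\phi\big(H_\phi^{\inv}(0)\big)=\phi(1)$, so $\sup_{u\ge1}H_k'(u)=H_k'(1)=r_\phi(k)/r_\phi(0)$.

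\textbf{Step 2 (Jensen and the drift).} Since $\lambda\in\couplage{P(x,\cdot)}{P(y,\cdot)}$, we have $\int_{E\times E}(V(z)+V(t))\,\lambda(\dint z,\dint t)=PV(x)+PV(y)$, which is finite by \autoref{hyp:drift_double}. By \eqref{eq:definition:Vlk}, concavity of $H_{k+1}$ (Step 1) and Jensen's inequality,
\[
\int_{E\times E}\Vl_{k+1}(z,t)\,\lambda(\dint z,\dint t)=\int_{E\times E}H_{k+1}\big(V(z)+V(t)\big)\,\lambda(\dint z,\dint t)\le H_{k+1}\big(PV(x)+PV(y)\big).
\]
Writing $w=V(x)+V(y)\ge2$ and using that $H_{k+1}$ is increasing together with \eqref{eq:drift_noyau_R2}, the right-hand side is $\le H_{k+1}\big(w-\phi(w)+b\1_\Delta(x,y)\big)$.

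\textbf{Step 3 (core estimate).} Since $\phi$ is increasing, $\phi(s)\le\phi(w)$ for $s\le w$, so $\int_{w-\phi(w)}^{w}\phi(s)^{-1}\,\dint s\ge1$, i.e. $H_\phi(w-\phi(w))\le H_\phi(w)-1$. Inserting this into $H_{k+1}(v)=H_\phi^{\inv}(H_\phi(v)+k+1)-H_\phi^{\inv}(k+1)$ and using monotonicity of $H_\phi^{\inv}$,
\[
H_{k+1}(w-\phi(w))\le H_\phi^{\inv}(H_\phi(w)+k)-H_\phi^{\inv}(k+1)=H_k(w)-\big(H_\phi^{\inv}(k+1)-H_\phi^{\inv}(k)\big)\le H_k(w)-r_\phi(k),
\]
the last step because $H_\phi^{\inv}(k+1)-H_\phi^{\inv}(k)=\int_k^{k+1}r_\phi(s)\,\dint s\ge r_\phi(k)$ ($r_\phi$ increasing). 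Off $\Delta$, this together with Step 2 is exactly the claim. On $\Delta$, the extra increment is handled by concavity of $H_{k+1}$ and Step 1:
\[
H_{k+1}(w-\phi(w)+b)-H_{k+1}(w-\phi(w))\le b\,H_{k+1}'(w-\phi(w))\le b\,H_{k+1}'(1)=\frac{b}{r_\phi(0)}\,r_\phi(k+1),
\]
and adding the two displays gives the claim on $\Delta$ as well.

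\textbf{Main difficulty.} The crux is Step 1: this is where the hypotheses defining $\mathbb{F}$ (concavity of $\phi$, $\phi'\to0$) are genuinely used, and the identities $H_\phi(1)=0$ and $r_\phi(0)=\phi(1)$ must be kept exact so that the constant $b/r_\phi(0)$ comes out precisely as stated rather than up to a harmless factor. A minor technical nuisance is that on $\Delta$ the drift only forces $w-\phi(w)\ge 2-b$, so $w-\phi(w)$ could a priori be smaller than $1$; this is absorbed by the reduction noted after \autoref{prop:H1toDriftDouble} (one may arrange $\phi(w)\le w-1$ for $w\ge2$, equivalently extend $H_\phi^{\inv}$, $r_\phi$ and $H_k$ slightly below $1$), and all the monotonicity and concavity statements above persist unchanged.
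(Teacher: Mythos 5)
Your Steps 1--2 and the off-$\Delta$ part of Step 3 are correct, and they in fact reconstruct in a self-contained way the two facts the paper imports from Douc--Fort--Moulines--Soulier (concavity of $H_{k+1}$ and an inequality of the type $H_{k+1}(u)-\phi(u)H_{k+1}'(u)\le H_k(u)-r_\phi(k)$), followed by the same Jensen step. The genuine gap is the on-$\Delta$ case: you split the increment and evaluate $H_{k+1}$ and $H_\phi$ at the intermediate point $w-\phi(w)$ with $w=V(x)+V(y)$. On $\Delta$ the drift \eqref{eq:drift_noyau_R2} together with $PV(x)+PV(y)\ge 2$ only gives $w-\phi(w)\ge 2-b$, so this point can lie below $1$, where $H_{k+1}$ is not defined by \eqref{eq:definition:Hk}, and it can even be negative when $b>2$; your core estimate $\int_{w-\phi(w)}^{w}\phi(s)^{-1}\,\dint s\ge 1$ and the bound $H_{k+1}'(w-\phi(w))\le H_{k+1}'(1)$ then have no meaning. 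The patch you invoke does not close this: the remark after \autoref{prop:H1toDriftDouble} and \autoref{lem:prolongement_phi} modify $\phi$ near the origin in the single-drift assumption, they do not provide $\phi(w)\le w-1$ for $w\ge 2$; forcing that by replacing $\phi$ with a smaller function changes $r_\phi$ and $\Vl_k$, i.e.\ proves a different statement from the lemma (whose conclusion is for the given $\phi$); and ``extending $H_\phi^{\inv}$, $r_\phi$, $H_k$ slightly below $1$'' is not possible in general, e.g.\ for $\phi(t)=\log(1+t)$ one has $\int_{0^+}\phi(s)^{-1}\,\dint s=\infty$, while $w-\phi(w)$ may be nonpositive.

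The repair is exactly the paper's ordering of the two bounds, which never leaves $\coint{1,\infty}$: apply the tangent-line inequality for the concave $H_{k+1}$ at the point $w\ge 2$ to the \emph{whole} increment $-\phi(w)+b\1_{\Delta}(x,y)$, legitimate because $w-\phi(w)+b\1_{\Delta}(x,y)\ge PV(x)+PV(y)\ge 2$, giving $H_{k+1}\bigl(w-\phi(w)+b\1_{\Delta}(x,y)\bigr)\le H_{k+1}(w)-\phi(w)H_{k+1}'(w)+b\1_{\Delta}(x,y)H_{k+1}'(w)$; then bound $H_{k+1}'(w)\le H_{k+1}'(1)=r_\phi(k+1)/r_\phi(0)$ in the $b$-term and use $H_{k+1}(w)-\phi(w)H_{k+1}'(w)\le H_k(w)-r_\phi(k)$, which you can prove with evaluations only at arguments $\ge 1$: indeed $H_{k+1}(w)-H_k(w)=\int_{H_\phi(w)+k}^{H_\phi(w)+k+1}r_\phi(t)\,\dint t-\int_{k}^{k+1}r_\phi(t)\,\dint t\le r_\phi(H_\phi(w)+k+1)-r_\phi(k)=\phi(w)H_{k+1}'(w)-r_\phi(k)$, using that $r_\phi$ is non-decreasing. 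With this reordering, your Step 1 computations (in particular $H_{k+1}'(1)=r_\phi(k+1)/r_\phi(0)$ and the monotonicity of $H_{k+1}'$) carry the rest of the proof unchanged.
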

\begin{proof}
  Set $\Vl(x,y) = V(x) +V(y)$. By \cite[Proposition
    2.1]{douc:fort:moulines:soulier:2004} $H_{k+1}$ is concave, which implies that for all $u \geq 1$ and $t \in
  \rset$ such that $t + u \geq 1$, we have
  \begin{equation}
    \label{eqhyp1inegHk2}
    H_{k+1}(t+u) - H_{k+1}(u) \leq H'_{k+1}(u) t  \eqsp.
    \end{equation}
    In addition, according to the proof of \cite[Proposition
    2.1]{douc:fort:moulines:soulier:2004}, for every $u \geq 1$ it holds:
 \begin{equation}
   \label{eq:eqhyp1inegHk}
 H_{k+1}(u) - \phi(u)H_{k+1}'(u) \leq H_k(u) - r_\phi(k) \eqsp.
 \end{equation}
 
 Therefore, the Jensen inequality and
 \eqref{eq:drift_noyau_R2} imply
\begin{align*}
  \int_{E\times E} \Vl_{k+1}(z,t) \dint \lambda(z,t) & \leq
  H_{k+1}\left(\int_{E\times E} \Vl(z,t) \dint \lambda(z,t) \right)\\
  & \leq H_{k+1} \left(\Vl(x,y) - \phi \circ \Vl(x,y) + b
    \1_{\Delta}(x,y)\right) \eqsp.
\end{align*}
Using \eqref{eqhyp1inegHk2}, \eqref{eq:eqhyp1inegHk} and the inequality
$H'_{k+1}(\Vl(x,y)) \leq H'_{k+1}(1)$ we get that
\begin{align*}
\int_{E\times E} \Vl_{k+1}(z,t) \dint \lambda(z,t)  &\leq H_{k+1} \parenthese{\Vl(x,y)} - \phi \circ \Vl(x,y) \, H_{k+1}'(\Vl(x,y)) + b
  H_{k+1}'(1) \1_{\Delta}(x,y)\\
& \ \leq H_{k} \parenthese{\Vl(x,y)} - r_\phi(k) + b H_{k+1}'(1)
  \1_{\Delta}(x,y) \eqsp.
\end{align*}
The proof is concluded upon noting that $H_{k+1}'(1) = r_\phi(k+1)/r_\phi(0)$.
\end{proof}
\begin{proposition}
\label{lem:implication_phi_suite_drift}
Assume that there exist a coupling kernel $Q$ for $P$, $\Delta \in \B(E\times
E)$, a function $\phi \in \mathbb{F}$ and a measurable function $V: E \to
\coint{1,\infty}$ such that \autoref{hyp:drift_double}($\Delta,\phi,V$) is
satisfied. Then for any $\ell \geq 0$, \textbf{A}($\Delta,\ell,\Vl_n,r_\phi,
\{\sup_{p \geq 0} r_\phi(p+1)/r_\phi(p) \}\, b /r_\phi(0)$) holds with
$\Vl_n(x,y) = H_n(V(x) + V(y))$ where $r_\phi$ and $H_n$ are given by
\eqref{eq:r_phi} and \eqref{eq:definition:Hk} respectively.
\end{proposition}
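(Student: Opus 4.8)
The plan is to feed \autoref{lem:eqhyp1} the specific coupling supplied by the coupling kernel $Q$, and then to rewrite the resulting recursion in the exact shape demanded by \textbf{A}.

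First I would fix $x,y \in E$ and observe that, since $Q$ is a coupling kernel for $P$, the probability measure $Q((x,y),\cdot)$ lies in $\couplage{P(x,\cdot)}{P(y,\cdot)}$. Applying \autoref{lem:eqhyp1} with $\lambda = Q((x,y),\cdot)$ then gives, for every $k \geq 0$,
\[
Q\Vl_{k+1}(x,y) \leq \Vl_k(x,y) - r_\phi(k) + \frac{b}{r_\phi(0)}\, r_\phi(k+1)\, \1_\Delta(x,y)\eqsp,
\]
with $\Vl_k(x,y) = H_k(V(x)+V(y))$. Next I would absorb the index shift in the last term: writing $r_\phi(k+1) = \{r_\phi(k+1)/r_\phi(k)\}\, r_\phi(k)$ and putting $c := \sup_{p\geq 0} r_\phi(p+1)/r_\phi(p)$ — which is finite because $r_\phi \in \Lambda$, see \autoref{lem:suite_sous_geometrique} — turns the displayed inequality into
\[
Q\Vl_{k+1}(x,y) \leq \Vl_k(x,y) - r_\phi(k) + \tilde b\, r_\phi(k)\, \1_\Delta(x,y)\eqsp, \qquad \tilde b := c\, b/r_\phi(0)\eqsp,
\]
which is exactly the first inequality of \textbf{A}($\Delta,\ell,\Vl_n,r_\phi,\tilde b$).

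It then remains to establish the uniform bound $\sup_{(x,y)\in\Delta} Q^{\ell-1}\Vl_0(x,y) < \infty$. For this I would use the crude estimate $\Vl_0 = H_0(V(x)+V(y)) \leq V(x)+V(y) =: \mathcal W(x,y)$ recorded after \eqref{eq:definition:Vlk}. Since $Q((x,y),\cdot)$ has marginals $P(x,\cdot)$ and $P(y,\cdot)$ we have $Q\mathcal W(x,y) = PV(x)+PV(y)$, so dropping the nonnegative term $\phi(\mathcal W(x,y))$ in the drift \eqref{eq:drift_noyau_R2} gives $Q\mathcal W \leq \mathcal W + b$; iterating and using the monotonicity of $Q$ yields $Q^{\ell-1}\Vl_0 \leq Q^{\ell-1}\mathcal W \leq \mathcal W + (\ell-1)b$, whence
\[
\sup_{(x,y)\in\Delta} Q^{\ell-1}\Vl_0(x,y) \leq \sup_{(x,y)\in\Delta}\{V(x)+V(y)\} + (\ell-1)b \eqsp,
\]
which is finite because $\sup_{(x,y)\in\Delta}\{V(x)+V(y)\}<\infty$ under \autoref{hyp:drift_double}($\Delta,\phi,V$). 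Putting the two parts together establishes \textbf{A}($\Delta,\ell,\Vl_n,r_\phi,\tilde b$) with $\Vl_n(x,y)=H_n(V(x)+V(y))$, as claimed.

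The whole argument is essentially bookkeeping once \autoref{lem:eqhyp1} is available; the only step where anything beyond elementary algebra enters is the finiteness of $c=\sup_p r_\phi(p+1)/r_\phi(p)$, and that is precisely where the subgeometric nature of $r_\phi$ — its membership in $\Lambda$, via \autoref{lem:suite_sous_geometrique} — is used.
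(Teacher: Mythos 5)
Your argument is correct and follows essentially the same route as the paper's proof: apply \autoref{lem:eqhyp1} with $\lambda = Q((x,y),\cdot)$, absorb the index shift via $r_\phi(k+1) \leq \{\sup_{p\geq 0} r_\phi(p+1)/r_\phi(p)\}\, r_\phi(k)$ (finite since $r_\phi \in \Lambda$, cf.\ \autoref{lem:suite_sous_geometrique}), and bound $Q^{\ell-1}\Vl_0$ on $\Delta$ by $\Vl_0 \leq V(x)+V(y)$ together with the iterated drift \eqref{eq:drift_noyau_R2} and \autoref{hyp:drift_double}(ii). The only (harmless) difference is that you iterate the drift at the level of $Q$ with the explicit bound $\mathcal W + (\ell-1)b$, whereas the paper phrases the same computation through the marginals of $Q^{\ell}$.
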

\begin{proof}
  By \cite[Lemma 2.3]{douc:fort:moulines:soulier:2004}, $r_\phi \in \Lambda$.  Then, it follows from
  \autoref{lem:eqhyp1} and
  \autoref{lem:suite_sous_geometrique}-\eqref{lem:suite_sous_geo_1} that for all $x,y \in E$,
\[
Q \Vl_{k+1}(x,y)  \leq \Vl_k(x,y) - r_\phi(k)
+ b \left\{ \sup_{p \geq 0}
r_\phi(p+1)/r_\phi(p) \right\}  r_\phi(k) \1_{\Delta}(x,y)/r_\phi(0) \eqsp.
\]
Finally, since $Q^\ell$ is a coupling kernel for $P^\ell$, we have by iterating
the inequality \eqref{eq:drift_noyau_R2}
\[
Q^\ell \Vl_0(x,y) \leq P^\ell V(x) + P^\ell V(y) \leq V(x) + V(y) + \ell b
\eqsp.
\]
Therefore under \autoref{hyp:drift_double}($\Delta,\phi,V$), $\sup_{(x,y) \in
  \Delta} Q^{\ell-1} \Vl_0(x,y) < \plusinfty$.
 \end{proof}

\begin{proof}[Proof of \autoref{theo:convergence_log_double_drift_1}-\eqref{theo:item_rate1}]
  Using \autoref{lem:implication_phi_suite_drift}, \autoref{lem:itere_couplage}
  applies with $R(t) = 1+ \int_{0}^t r_\phi(s) \rmd s$ for $t \in \rset_+$.
  Note that we have $R = H_\phi^\inv$.

  Set $M_V >0 $ such that $ \pi(V \leq M_V) \geq 1/2$; such a constant exists
  since $\pi(E)=1$ and $E = \bigcup_{k \in \N} \defEns{V \leq k}$.  Set $M
  >M_V$ and define the probability $\pi_M$ by $\pi_M(\cdot) = \pi(\cdot \cap
  \{V \leq M \})/ \pi(\{V \leq M \})$. Since $\pi$ is invariant for $P$,
  $W_{d}(P^n(x,\cdot) , \pi ) = W_{d}(P^n(x,\cdot) , \pi P^n )$ and the
  triangle inequality implies:
\begin{equation}
  \label{eq:base_propo_finale_couplage}
  W_{d}(P^n(x,\cdot) , \pi ) \leq W_{d}(P^n(x,\cdot), \pi_M
  P^n) + W_{d} (\pi_M P^n , \pi P^n) \eqsp, \quad \text{for all $n \geq 1$.}
\end{equation}
Consider the first term in the RHS of \eqref{eq:base_propo_finale_couplage}.  By
\autoref{lem:contract_coupling_weak_contraction}-\eqref{lem:contraction_rev_mu_nu_2}, for all $x \in E$ and $n \geq 1$ :
\[
W_{d}(P^n(x,\cdot), \pi_M P^n) \leq \inf_{\lambda \in ~ \couplage{\delta_x}{
    \pi_M} } \int_{E\times E}Q^nd(z,t) \, \dint \lambda
(z,t) \eqsp.
\]
Let $v_n= R(-n \log(1-\epsilon) /\{2 (\log(R(n))- \log(1-\epsilon))\})$.  By
\autoref{lem:itere_couplage}-\eqref{eq:distance_iteration} and since $R=H_\phi
^{\inv}$ is increasing, for all $x \in E$ and $n \geq 1$
\begin{align}
  \nonumber
  & R(n/2) \, W_{d}(P^n(x,\cdot), \pi_M P^n)  \\
  \nonumber & \qquad \leq R(n/2)/R(n) + a_1 \ \inf_{\lambda \in ~
    \couplage{\delta_x}{ \pi_M} } \int_{E\times E} ( P^{\ell-1}V(z) +
  P^{\ell-1 }V(t) ) \, \dint \lambda (z,t) + a_2  +   a_3 R(n/2)/ v_n  \\
\label{eq:deuxieme_terme_majo_1_theo_poly}
& \qquad \leq a_1 \left(V(x) + \int_E V(t) \rmd \pi_M(t) + b (\ell-1) \right)+
a_2 +1 + a_3 R(n/2) / v_n \eqsp,
\end{align}
where  in the last inequality, we used
\begin{equation}
  \label{eq:driftitere}
  P^k V(x) \leq V(x) + b k /2 \eqsp.
\end{equation}
which is obtained by iterating the drift inequality~(\ref{eq:drift_noyau_R2})
and applying it with $x=y$. Since $ x \mapsto \phi(x) /x$ is non-increasing,
$V(t) \leq M \phi(V(t))/\phi(M)$ on $\defEns{V \leq M}$, we have
\begin{equation}
\label{eq:majo_int_V}
\int_E V(t) \rmd \pi_M(t) \leq 2 \pi(\phi \circ V)  M / \phi(M) \eqsp.
\end{equation}
Note that by \autoref{coro:existence_pi}, $M_\phi = \int_E \phi \circ V(t) \ \rmd \pi(t) <
\infty$.  Combining \eqref{eq:deuxieme_terme_majo_1_theo_poly} and
\eqref{eq:majo_int_V} yield
\begin{equation}
  \label{eq:deuxieme_terme_ineg_propo_finale_couplage}
W_{d}(P^n(x,\cdot), \pi_M P^n)
\leq  \{ a_1 \left(V(x) + 2  M_\phi  M /\phi(M)  + b (\ell-1)
\right)+ a_2 +1 \}/R(n/2) +   a_3/ v_n \eqsp.
\end{equation}
Consider the second term in the RHS of \eqref{eq:base_propo_finale_couplage}.
Since $d$ is bounded by $1$, $W_d(\mu,\nu) \leq W_{d_0}(\mu,\nu)$ (where
$W_{d_0}$ is the total variation distance) and
\autoref{lem:contract_coupling_weak_contraction}-\eqref{lem:contract_rev_3}
implies $ W_{d} ( \pi_M P^n , \pi P^n) \leq W_{d}(\pi_M, \pi) \leq W_{d_0}(
\pi_M , \pi )$. For every $A \in \B(E)$, we get
\[
\left| \pi_M(A) - \pi(A) \right| = \left| \pi_M(A) (1 - \pi(\{V \leq M\})) + \pi_M(A) \pi(V \leq M) -\pi(A) \right|  \leq 2 \pi(\{V > M \}) \eqsp,
\]
showing that
\begin{equation}
\label{eq:alain-1}
W_{d} ( \pi_M P^n , \pi P^n) \leq 2 \pi(\{V>M \}) = 2\pi\parenthese{\{\phi(V) >\phi(M) \}} \leq  2 M_\phi/\phi(M) \eqsp.
\end{equation}
Since $R(n/2) > M_V$ for all $n$ large enough, we can now choose $M = R(n/2)$
in \eqref{eq:deuxieme_terme_ineg_propo_finale_couplage} and \eqref{eq:alain-1}.
This yields
\begin{equation*}
W_d(P^n(x,\cdot), \pi) \leq \{ a_1 \left(V(x)  + b (\ell-1)
\right)+ a_2 +1 \}/H_\phi ^{\inv}(n/2) + 2 M_\phi(a_1+1)/\phi(R(n/2))+  a_3/ v_n \eqsp.
\end{equation*}

\eqref{theo:item_rate2} The proof is along the same lines, using
\autoref{lem:itere_couplage}-\eqref{eq:distance_iteration_2} instead of
\autoref{lem:itere_couplage}-\eqref{eq:distance_iteration}. Finally, we end up
with the following inequality for $n$ large enough:
\begin{multline*}
  W_d(P^n(x,\cdot), \pi) \leq (1+(1+b_1 \kappa ) \{ \kappa^{-1} r_\phi (M_\kappa) + a_1(V(x) +b(\ell-1)) +b_2 \} )/ \{ R^\delta(n) \} \\
  + 2M_\phi((1+b_1 \kappa)a_1+1)/\{ \phi( R^\delta(n)) \} \eqsp,
\end{multline*}
where $\kappa = ((1-\epsilon)^{-(1-\delta)/\delta}-1)/b_1 $.
\end{proof}

\subsection{Proof of \autoref{prop:H1toDriftDouble}}
\label{subsec:proof:prop:H1toDriftDouble}
Note that since $c= 1 - 2 b / \phi(\upsilon)$ and $\upsilon > \phi^{\inv}(2b)$, we get $c \in \ooint{0,1}$.
Set $\smallSet = \{ V \leq \upsilon \}$. By \eqref{eq:drift},
  \[
  PV(x) + PV(y) \leq V(x) + V(y) - c \phi\parenthese{V(x) + V(y) } + 2b \1_{ \smallSet
    \times \smallSet } (x,y)+ \Omega(x,y)
  \]
  where $\Omega(x,y) = c \phi\parenthese{V(x) + V(y) } - \phi(V(x)) -\phi(V(y))
  + 2b \1_{\parenthese{\smallSet \times \smallSet }^c}(x,y)$.  We show that for
  every $x,y \in E$, $\Omega(x,y) \leq 0$.  Since $\phi$ is sub-additive (note
  that $\phi(0) = 0$), for all $x,y \in E$
\[
  \Omega(x,y)  \leq -(1-c)\parenthese{ \phi(V(x)) +\phi(V(y)) } + 2b  \1_{\parenthese{\smallSet \times \smallSet }^c}(x,y) \eqsp.
\]
%Justification
%Comme  $\parenthese{\smallSet \times \smallSet }^c = (E \times \smallSet ^c) \cup (\smallSet ^c \times E)$, $\phi(V(x)) +
%\phi(V(y)) \geq \phi(\upsilon)$
On $\parenthese{\smallSet \times \smallSet}^c$, $\phi(V(x)) +
\phi(V(y)) \geq \phi(\upsilon)$. The definition of $c$ implies that
$\Omega(x,y) \leq 0$.

%%% Local Variables:
%%% mode: latex
%%% TeX-master: "main"
%%% End:

\section{Proofs of \autoref{sec:PCN}}
\label{sec:proof:application}
\label{appendix:GaussianMeasure}

\begin{lemma}
\label{lem:prolongement_phi}
Let $M>0$. Assume that there exists an increasing continuoulsy differentiable
concave function $\phi: [M,\infty) \to \rset_+$, such that $\lim_{x \to \infty}
\phi'(x)= 0$ and satisfying, on $\{ V \geq M\}$, $PV(x) \leq V(x) - \phi \circ
V(x) + b$. Then, there exist $\tilde{\phi} \in \mathbb{F}$ and $\tilde{b}$ such
that, $PV \leq V - \tilde{\phi} \circ V + \tilde{b}$ on $E$, $\phi(v) =
\tilde{\phi}(v)$ for all $v$ large enough, and $\tilde{\phi}(0) = 0$.
\end{lemma}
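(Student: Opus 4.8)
The plan is to build $\tilde\phi$ by leaving $\phi$ untouched for large arguments and gluing onto it, over a bounded interval reaching down to $0$, a short concave $C^{1}$ piece that vanishes at the origin; the error this introduces lives only on a sublevel set of $V$ and is bounded, so it can be absorbed into a larger additive constant $\tilde b$.

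First I would make two harmless reductions. One may enlarge $M$ so that $M\ge 1$ (this only shrinks $\{V\ge M\}$, where the drift is assumed) and so that $\phi(M)>0$ (possible since $\phi$ is increasing and not identically $0$; recall $\lim_{t\to\infty}\phi(t)=+\infty$ is implicit here, as otherwise no $\tilde\phi\in\mathbb F$ could dominate $\phi$). Next I would fix the gluing point: since $\phi'$ is nonincreasing, for $t\ge M$ we get $\phi(t)-\phi(M)=\int_M^t\phi'(s)\,\dint s\ge(t-M)\phi'(t)$, hence $\phi(t)-t\phi'(t)\ge\phi(M)-M\phi'(t)\to\phi(M)>0$ as $t\to\infty$ (using $\phi'(t)\to0$), so I may fix $M_1\ge M$ with $\phi(M_1)>M_1\phi'(M_1)$.

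Then I would define the extension: $\tilde\phi:=\phi$ on $[M_1,\infty)$, and $\tilde\phi(v):=av^2+bv$ on $[0,M_1]$ with $a:=(M_1\phi'(M_1)-\phi(M_1))/M_1^2<0$ and $b:=2\phi(M_1)/M_1-\phi'(M_1)>0$ (both signs follow from $\phi(M_1)>M_1\phi'(M_1)\ge0$). A direct check gives $\tilde\phi(0)=0$, $\tilde\phi(M_1)=\phi(M_1)$ and $\tilde\phi'(M_1^-)=\phi'(M_1)$; since $a<0,b>0$ the quadratic is concave and increasing on $[0,M_1]$, and matching value and derivative at $M_1$ makes $\tilde\phi$ globally $C^1$ on $(0,\infty)$ and concave on $\R_+$. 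As $\tilde\phi$ coincides with $\phi$ near $+\infty$, it is increasing with $\tilde\phi(t)\to\infty$ and $\tilde\phi'(t)\to0$; hence $\tilde\phi\in\mathbb F$, $\tilde\phi(0)=0$, and $\tilde\phi=\phi$ for all $v\ge M_1$, as required.

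Finally I would verify the drift on all of $E$. On $\{V\ge M_1\}$ we have $\tilde\phi\circ V=\phi\circ V$, so $PV\le V-\tilde\phi\circ V+b$ is immediate. On $\{V<M_1\}$, $V$ is bounded, and $\beta:=\sup_{\{V<M_1\}}PV<\infty$ — on $\{M\le V<M_1\}$ because the assumed inequality yields $PV\le V-\phi\circ V+b\le M_1+b$, and on $\{V<M\}$ by the standard boundedness of $PV$ over that sublevel set (automatic whenever the hypothesis is the restriction to $\{V\ge M\}$ of a drift valid on all of $E$). Since $V\ge1$ and $\tilde\phi$ is increasing, $V-\tilde\phi\circ V\ge1-\tilde\phi(M_1)$ on $\{V<M_1\}$, so $PV\le\beta\le V-\tilde\phi\circ V+(\beta-1+\tilde\phi(M_1))$ there. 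Taking $\tilde b:=\max\{b,\ \beta-1+\tilde\phi(M_1)\}$ gives $PV\le V-\tilde\phi\circ V+\tilde b$ everywhere. The only delicate point is the choice of $M_1$ — that a concave $C^1$ completion vanishing at $0$ exists at all — which is exactly where concavity together with $\phi'\to0$ is used; everything else is bookkeeping with constants.
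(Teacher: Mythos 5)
Your proof is correct and follows essentially the same route as the paper: extend $\phi$ below the threshold by a concave, increasing, $C^1$ function vanishing at $0$ with matched value and slope at the gluing point, and absorb the resulting discrepancy on the low sublevel set of $V$ into an enlarged additive constant. The only differences are in the bookkeeping — the paper glues $\bigl(2\phi'(M)-\phi(M)/M\bigr)t+2\bigl(\phi(M)-M\phi'(M)\bigr)M^{-1/2}\sqrt{t}$ at $M$ itself, whereas you glue a concave quadratic at a shifted point $M_1$; your preliminary step securing $\phi(M_1)>M_1\phi'(M_1)$ is actually a welcome refinement, since the paper's $\sqrt{t}$-glue is concave and increasing only when $\phi(M)\geq M\phi'(M)$, a condition it leaves implicit (just as it leaves implicit the boundedness of $PV$ on the low sublevel set, which you flag explicitly and handle in the same way).
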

\begin{proof}
Observe indeed that the function $\tilde{\phi}$ defined by
\[
\tilde{\phi}(t) =
\begin{cases}
(2 \phi'(M) - \frac{\phi(M)}{M}) t +
\frac{2(\phi(M) - M \phi'(M))}{\sqrt{M}} \sqrt{t}
& \text{ for } 0 \leq t < M \\
\phi(t) & \text{ for } t \geq M  \eqsp,
\end{cases}
\]
is concave increasing and continuously differentiable on
$\coint{1,\plusinfty}$, $\tilde{\phi}(0)=0$, $\lim_{v \to \infty}
\tilde{\phi}(v) = \infty$ and $\lim_{v \to \infty} \tilde{\phi}'(v)
=0$.  The drift inequality (\ref{eq:drift_noyau_R2}) implies that for all $x
\in E$
\begin{equation*}
PV(x) \leq V(x)  - \tilde{\phi}\parenthese{ V(x)} +
\tilde{b} \eqsp,
\end{equation*}
with $\tilde{b}= b + \sup_{\{t  \leq M\}}\defEns{
  \tilde{\phi}\parenthese{t} - \phi \parenthese{t}}$.
\end{proof}

\subsection{Proof of \autoref{lem:drift_g_hold}}
\label{sec:proof:lem:drift_g_hold}
For notational simplicity, let $P= P_{\pCN}$.  By definition of $P$, $V(X_1)
\leq V(X_0) \vee V(\rho X_0 +\sqrt{1-\rho^2} Z_1)$. Since $\norm{x+y}^2 \leq 2
\norm{x}^2 + 2 \norm{y}^2$, we get
 \begin{equation}
 \label{eq:majo_sur_K_g_hold}
 \sup_{x \in \boule{0}{1}} PV(x) \leq \sup_{x \in \boule{0}{1}}
 \int_\hilbert \exp\parenthese{2s\parenthese{ \norm{x}^2 +( 1-\rho^2) \norm{z}^2}} \dint \gamma(z) \eqsp,
 \end{equation}
 and \autoref{theo:Fernique} implies that the RHS is finite.\\
Now, let $x \not \in \boule{0}{1}$ and  set $w(x) = (1-\rho) \norm{x}/2$. Define the events $\setAbruit= \{ \norm{Z_1} \leq w(X_0)/\sqrt{1-\rho^2} \}$,
$\setaccept = \{\alpha(X_0,\rho X_0 + \sqrt{1-\rho^2}Z_1) \geq U\}$, and
$ \setreject = \{\alpha(X_0, \rho X_0 + \sqrt{1-\rho^2}Z_1) < U\}$,
where $U \sim \mathcal{U}(\ccint{0,1})$, $Z_1 \sim
\gamma$, and $U$ and $Z_1$ are independent. With these definitions, we get,
\begin{equation}
\label{eq:drift_g_hold}
PV(x) = \expeMarkov{x}{V(X_1) \1_{ \setAbruit^c}} + \expeMarkov{x}{V(X_1)\1_{\setAbruit} ( \1_{\setaccept} + \1_{\setreject}) } \eqsp.
\end{equation}
For the first term in the RHS, using again $V(X_1) \leq V(X_0) \vee V(\rho X_0 +\sqrt{1-\rho^2} Z_1)$ and $\norm{x+y}^2 \leq 2 \norm{x}^2 + 2 \norm{y}^2$, we get
\begin{align}
\nonumber
\expeMarkov{x}{V(X_1) \1_{ \setAbruit^c}}
&\leq \exp \parenthese{2 s  \norm{x}^2 }
\int_{\sqrt{1-\rho^2} \norm{z}  \geq w(x) } \exp \parenthese{
2s (1-\rho^2) \norm{z}^2 } \dint \gamma (z)\\
\nonumber
& \leq \exp \parenthese{2 s  \norm{x}^2 -(\theta/2) w(x)^2}
\int_{\hilbert } \exp \parenthese{
(\theta/2 + 2s)( 1-\rho^2) \norm{z}^2 } \dint \gamma (z) \\
\nonumber
& \leq \int_{\hilbert } \exp
( (5/8) (1-\rho^2)\theta \norm{z}^2 ) \dint \gamma (z) \eqsp,
\end{align}
where the definition of $s$ and $w$ are used for the last inequality.
Hence by \autoref{theo:Fernique}, there exists a constant $b< \infty$ such that
\begin{equation}\label{eq:drift_g_hold_2}
  \sup_{x \in \hilbert} \expeMarkov{x}{V(X_1) \1_{\setAbruit^c}}  \leq b \eqsp.
\end{equation}
Consider the second term in the RHS of \eqref{eq:drift_g_hold}. On
the event $\setaccept  \cap \setAbruit$, the move is accepted and $\norm{X_1 - \rho X_0 } \leq w(X_0)$.
On $\setreject$, the move is rejected and  $X_1 = X_0$. Hence,
\begin{equation*}
\expeMarkov{x}{V(X_1)\1_{\setAbruit} ( \1_{\setaccept} + \1_{\setreject})
}
\leq \left\{\sup_{z \in \boule{\rho x}{w(x)}} V(z) \right\} \probaMarkov{x}{\setAbruit \cap \setaccept } +
V(x) \probaMarkov{x}{\setAbruit \cap \setreject}
\eqsp.
\end{equation*}
For $z \in \boule{\rho x}{w(x)}$, by the triangle inequality, $ V(z) \leq \exp(s(1+\rho)^2 \norm{x}^2/4)$. Therefore
for any $x \not \in \boule{0}{1}$ since $\rho \in \coint{0,1}$, $\sup_{z \in \boule{ \rho x}{w(x)}} V(z) \leq \zeta V(x) $, with $\zeta = \exp\{((1+\rho)^2/4-1) s \} <1$.  This yields
\begin{align*}
  \expeMarkov{x}{V(X_1)\1_{\setAbruit} ( \1_{\setaccept} + \1_{\setreject}) } & \leq \zeta V(x) \probaMarkov{x}{ \setAbruit \cap \setaccept } + V(x) \probaMarkov{x}{ \setAbruit \cap \setreject }\\
  & \leq V(x) \probaMarkov{x}{ \setAbruit} -(1-\zeta) V(x) \probaMarkov{x}{\setaccept \cap \setAbruit} \eqsp.
 \end{align*}
Since $U_1$ and $Z_1$ are independent, we get
$$
\probaMarkov{x}{\setaccept \cap \setAbruit} = \expeMarkov{x}{\parenthese{ 1 \wedge \rme^{g(x) - g(\rho x +
      \sqrt{1-\rho^2} Z_1)}} \1_{\setAbruit} } \eqsp.
$$
By definition of the set
$\setAbruit$ and using the inequality
$\inf_{z \in \boulefermee{\rho x }{ w(x)}} \exp(g(x) - g(z)) \geq \exp(-  C_g  (1-\rho)^\beta (3/2)^{\beta} \norm{x}^\beta)$, we get
$ \probaMarkov{x}{\setaccept \cap \setAbruit} \geq  \exp(- \{ \ln V(x) / \kappa\}^{\beta/2}) \probaMarkov{x}{ \setAbruit}$, with $\kappa =  \theta C_g^{-2/\beta}/36$.
Hence, for any $x \notin \boule{0}{1}$,
\begin{align} \label{eq:drift_g_hold_1}
  \expeMarkov{x}{V(X_1)\1_{\setAbruit} ( \1_{\setaccept} +
    \1_{\setreject}) } \leq  V(x) - (1-\zeta) \,  V(x)  \, \exp(-\kappa^{-\beta/2}\log^{\beta/2} V(x)) \eqsp.
\end{align}
Combining \eqref{eq:majo_sur_K_g_hold}, \eqref{eq:drift_g_hold_2} and \eqref{eq:drift_g_hold_1} in
\eqref{eq:drift_g_hold}, it follows that  there exists $\tilde b >0$ such that, for every $x \in \hilbert$,
\[
PV(x) \leq V(x) - (1-\zeta) \, V(x) \, \exp(-\kappa^{-\beta/2}  \log^{\beta/2} V(x)) + \tilde b \eqsp.
\]
The proof follows from  \autoref{lem:prolongement_phi}.

\subsection{Proof of \autoref{lem:g_simple_smallness_hold}}
\label{proof:lem:g_simple_smallness_hold}
 We preface the proof of \autoref{lem:g_simple_smallness_hold} by a Lemma.
\begin{lemma}
\label{lem:pCN_contrafaible_g_hold}
Assume \autoref{hyp:pCN}. There exists $\eta \in \ooint{0,1}$ satisfying the following assertions
\begin{enumerate}[(i)]
\item \label{eq:contraction_strict_g_hold}
For all $L > 0$, there exists $k(Q_{\pCN},L,\eta) < 1$ such that, for all $x,y \in \boule{0}{L}$ satisfying $d_\eta(x,y) < 1$, $Q_{\pCN}d_{\eta}(x,y)\leq  k(Q_{\pCN},L,\eta)  d_\eta(x,y)$.
\item \label{eq:contraction_large_g_hold}
For all $x,y \in \hilbert$, $Q_{\pCN}d_{\eta}(x,y)\leq \, d_\eta(x,y)$.
\end{enumerate}
\end{lemma}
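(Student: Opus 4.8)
The plan is to unfold the explicit coupling formula \eqref{eq:def_Q_pCN} and bound the integrand pointwise in the common Gaussian draw. Write $v=\rho x+\sqrt{1-\rho^2}\Xi$ and $t=\rho y+\sqrt{1-\rho^2}\Xi$, so that $v-t=\rho(x-y)$ and hence $\norm{v-t}^\beta=\rho^\beta\norm{x-y}^\beta$. Both assertions are trivial when $d_\eta(x,y)=1$ (since $d_\eta\le 1$ everywhere), so I assume $\norm{x-y}^\beta<\eta$; then $d_\eta(v,t)=\rho^\beta d_\eta(x,y)$. Fixing $\Xi$ and assuming without loss of generality $\alpha(x,v)\ge\alpha(y,t)$ (the reverse case is symmetric under $x\leftrightarrow y$), formula \eqref{eq:def_Q_pCN} shows that the integrand of $Q_{\pCN}d_\eta(x,y)$ at $(v,t)$ equals $\alpha(y,t)\,d_\eta(v,t)+(\alpha(x,v)-\alpha(y,t))\,d_\eta(v,y)+(1-\alpha(x,v))\,d_\eta(x,y)$, which, using $d_\eta(v,y)\le 1$ and $\alpha(y,t)\le\alpha(x,v)$, is at most
\[
\alpha(x,v)\,\rho^\beta d_\eta(x,y)+\bigl(\alpha(x,v)-\alpha(y,t)\bigr)+\bigl(1-\alpha(x,v)\bigr)d_\eta(x,y)=\bigl(1-\alpha(x,v)(1-\rho^\beta)\bigr)d_\eta(x,y)+\bigl(\alpha(x,v)-\alpha(y,t)\bigr).
\]

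The key estimate is a sharp bound on $\alpha(x,v)-\alpha(y,t)$. Since $u\mapsto 1\wedge\rme^u$ is $1$-Lipschitz and $g$ is $\beta$-Hölder (\autoref{hyp:pCN}),
\[
\abs{\,g(x)-g(v)-g(y)+g(t)\,}\le \abs{g(x)-g(y)}+\abs{g(v)-g(t)}\le C_g\norm{x-y}^\beta+C_g\rho^\beta\norm{x-y}^\beta=:\delta,
\]
and a short case analysis on the signs of $g(x)-g(v)$ and $g(y)-g(t)$ upgrades this to the multiplicative bound $\abs{\alpha(x,v)-\alpha(y,t)}\le\bigl(\alpha(x,v)\wedge\alpha(y,t)\bigr)(\rme^{\delta}-1)$. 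I then fix $\eta\in\ooint{0,1}$, depending only on $C_g,\rho,\beta$, small enough that $\delta\le 1$ and $\rme^{\delta}-1\le\tfrac12(1-\rho^\beta)\eta^{-1}\norm{x-y}^\beta=\tfrac12(1-\rho^\beta)d_\eta(x,y)$ hold whenever $\norm{x-y}^\beta<\eta$; since $\delta=C_g(1+\rho^\beta)\norm{x-y}^\beta$ and $\rme^{u}-1\le(\rme-1)u\le 2u$ for $u\in\ccint{0,1}$, the choice $\eta=\tfrac12\wedge\dfrac{1-\rho^\beta}{4C_g(1+\rho^\beta)}$ works (this is positive because $\rho\in\coint{0,1}$).

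Combining the two displays (and using $\alpha(x,v)-\alpha(y,t)\le\alpha(x,v)(\rme^\delta-1)$), the integrand of $Q_{\pCN}d_\eta(x,y)$ at $(v,t)$ is bounded by $\bigl(1-\tfrac12\alpha(x,v)(1-\rho^\beta)\bigr)d_\eta(x,y)$ on the event $\{\alpha(x,v)\ge\alpha(y,t)\}$, and by the analogous expression with $\alpha(y,t)$ in place of $\alpha(x,v)$ on the complement; in either case the prefactor is at least $\alpha(x,v)\wedge\alpha(y,t)\ge 0$. Integrating against $\gamma$ gives
\[
Q_{\pCN}d_\eta(x,y)\le\Bigl(1-\tfrac12(1-\rho^\beta)\,\expe{\alpha(x,v)\wedge\alpha(y,t)}\Bigr)d_\eta(x,y)\le d_\eta(x,y),
\]
which is the global weak contraction. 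For the strict contraction on $\boule{0}{L}$, I bound $\alpha(x,v)\ge\rme^{-C_g\norm{x-v}^\beta}\ge\rme^{-C_g((1-\rho)L+\sqrt{1-\rho^2}\norm{\Xi})^\beta}$, and likewise for $\alpha(y,t)$ when $y\in\boule{0}{L}$, so that $\expe{\alpha(x,v)\wedge\alpha(y,t)}\ge p_L:=\int_{\hilbert}\rme^{-C_g((1-\rho)L+\sqrt{1-\rho^2}\norm{\xi})^\beta}\dint\gamma(\xi)>0$ (the integrand is strictly positive and $\le 1$; Fernique's theorem is not needed). Hence the strict contraction holds with $k(Q_{\pCN},L,\eta)=1-\tfrac12(1-\rho^\beta)p_L<1$.

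The main obstacle is the refined acceptance-ratio inequality $\abs{\alpha(x,v)-\alpha(y,t)}\le(\alpha(x,v)\wedge\alpha(y,t))(\rme^{\delta}-1)$: the crude bound $\abs{\alpha(x,v)-\alpha(y,t)}\le\rme^{\delta}-1$ is useless once the acceptance ratio degenerates — precisely the regime \autoref{hyp:pCN} is meant to cover — because then the contribution of the "mismatched" transition to $(v,y)$, which is proportional to $\alpha(x,v)-\alpha(y,t)$, would not be absorbed by the contraction $\alpha(x,v)(1-\rho^\beta)d_\eta(x,y)$ gained on the "both-accept" transition to $(v,t)$.
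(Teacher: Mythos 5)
Your argument is correct, and it reaches the conclusion by a genuinely different treatment of the key technical step than the paper's proof. The common skeleton is the same: under the synchronous coupling the both-accept branch contracts $d_\eta$ by $\rho^\beta$, the both-reject branch leaves it unchanged, and one must show that the price of the mismatched branches is at most a fraction (half) of the contraction gain, then choose $\eta$ small. The paper controls the mismatch only after integrating in $z$: it introduces $h(x)=g(x)-g(\rho x)$, restricts to the bounded-noise event $\setAbruit$, partitions $\hilbert$ into the four regions $\Kfrac_1,\dots,\Kfrac_4$ according to which acceptance ratios saturate at $1$, and invokes Fernique's theorem to bound $\int_\hilbert \rme^{C_g(1-\rho^2)^{\beta/2}\norm{z}^\beta}\dint\gamma(z)$; both the gain and the loss then carry the common degeneration factor $\rme^{h(x)\wedge h(y)}\wedge 1$, and strict contraction on $\boule{0}{L}$ follows from $\inf_{\boule{0}{L}}h>-\infty$. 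You instead work pointwise in the common Gaussian draw and use the elementary inequality $\abs{1\wedge\rme^{a}-1\wedge\rme^{b}}\leq \parenthese{1\wedge\rme^{a}\wedge\rme^{b}}(\rme^{\abs{a-b}}-1)$ (your case analysis is indeed valid), so that at each fixed $z$ the mismatch is dominated by the acceptance level at that same $z$ times $\rme^{\delta}-1$, $\delta=C_g(1+\rho^\beta)\norm{x-y}^\beta$; the gain thus absorbs the loss before integration, which dispenses with $h$, $\setAbruit$, the partition and Fernique's theorem, and yields the explicit choice $\eta=\tfrac12\wedge(1-\rho^\beta)/\{4C_g(1+\rho^\beta)\}$ together with the bound $Q_{\pCN}d_\eta(x,y)\leq \parenthese{1-\tfrac12(1-\rho^\beta)\int_\hilbert \alpha(x,\rho x+\sqrt{1-\rho^2}z)\wedge\alpha(y,\rho y+\sqrt{1-\rho^2}z)\dint\gamma(z)}d_\eta(x,y)$, the constant $k(Q_{\pCN},L,\eta)$ coming from the positive integral $p_L$. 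What the paper's route buys is the explicit factor $\rme^{h(x)\wedge h(y)}\wedge 1$ in the contraction rate (not needed for the lemma as stated, nor downstream in \autoref{lem:g_simple_smallness_hold}); what yours buys is a shorter, more elementary argument with fully explicit constants depending only on $C_g,\rho,\beta$. One phrasing to tighten: ``in either case the prefactor is at least $\alpha(x,v)\wedge\alpha(y,t)$'' should say that the coefficient being subtracted ($\alpha(x,v)$ or $\alpha(y,t)$, whichever is the larger one) is bounded below by the minimum of the two, which is what justifies the integrated bound; the mathematics is otherwise sound.
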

\begin{proof}
\label{proof:lem:pCN_contrafaible_g_hold}
Let $\eta \in \ooint{0,1}$; for ease of notation, we simply write  $Q$ for $Q_{\pCN}$.  Let $L>0$ and choose $x,y \in \boule{0}{L}$ satisfying $d_\eta(x,y) < 1$.  Let
$(X_1,Y_1)$ be the basic coupling between $P(x,\cdot)$ and $P(y,\cdot)$; let
$Z_1, U_1$ be the Gaussian variable and the uniform variable used for the
basic coupling. Set $\setAbruit = \defEns{\sqrt{1-\rho^2} \norm{Z_1 } \leq 1}$,
$\setaccept  = \defEns{\Psi_\wedge(X_0,Y_0,Z_1)  > U_1}$,  $\setreject = \defEns{\Psi_\vee(X_0,Y_0,Z_1) < U_1}$,
where
\begin{align}
\label{eq:definition-Psi-wedge}
\Psi_\wedge(x,y,z) &= \alpha(x, \rho x + \sqrt{1-\rho^2}z) \wedge \alpha(y, \rho y + \sqrt{1-\rho^2}z) \eqsp \\
\label{eq:definition-Psi-vee}
\Psi_\vee(x,y,z) &= \alpha(x, \rho x + \sqrt{1-\rho^2}z) \vee \alpha(y, \rho y + \sqrt{1-\rho^2}z) \eqsp.
\end{align}
On the event $\setaccept$, the moves are both accepted so that $X_1 =
\rho X_0 + \sqrt{1-\rho^2} Z_1$ and $Y_1 = \rho X_0 + \sqrt{1-\rho^2} Z_1$; On the event
$\setreject$, the moves are both rejected so that $X_1=X_0$ and $Y_1 =Y_0$. It holds,
\begin{equation}
 Q d_\eta(x,y) \leq  \expeMarkovTilde{x,y} {d_\eta(X_1,Y_1)}
 \leq \expeMarkovTilde{x,y} {d_\eta(X_1,Y_1) \1_{\setaccept \cup \setreject}}
 + \probaMarkovTilde{x,y}[(\setaccept \cup \setreject)^c] \eqsp,
\label{eq:res_inter_pCN_g_hold_a}
\end{equation}
where we have  used $d_\eta$ is bounded by $1$.
Since  $d_\eta(X_1,Y_1) = \rho^\beta d_\eta(X_0,Y_0)$, on $\setaccept$, and
$d_\eta(X_1,Y_1) = d_\eta(X_0,Y_0)$, on $\setreject$, we get
$\expeMarkovTilde{x,y} {d_\eta(X_1,Y_1) (\1_{\setaccept \cup \setreject})}
\leq \rho^\beta d_\eta(x,y)\probaMarkovTilde{x,y}[\setaccept] + d_\eta(x,y)\probaMarkovTilde{x,y}[\setreject]$.
Since $\probaMarkovTilde{x,y}[\setaccept] +\probaMarkovTilde{x,y}[\setreject] \leq 1$, we have
\begin{align}
\nonumber
     \expeMarkovTilde{x,y} {d_\eta(X_1,Y_1) (\1_{\setaccept \cup \setreject})}
    &\leq d_\eta(x,y) - (1-\rho^\beta) \, d_\eta(x,y) \probaMarkovTilde{x,y}[\setaccept] \\
    \label{eq:majo1}
    &\leq d_\eta(x,y) - (1-\rho^\beta) \, d_\eta(x,y)
    \probaMarkovTilde{x,y}[\setaccept \cap \setAbruit] \eqsp.
  \end{align}
Set $  \Theta(x,y,z) = \left|\alpha(x,\rho x + \sqrt{1-\rho^2} z) - \alpha(y,\rho y + \sqrt{1-\rho^2} z) \right|$.
Since  $Z_1$ and $U_1$ are independent, it follows that
$\probaMarkovTilde{x,y}[ (\setaccept \cup \setreject)^c ]  \leq \int_\hilbert \Theta(x,y,z) \dint \gamma(z) $
Plugging this identity and \eqref{eq:majo1}  in
\eqref{eq:res_inter_pCN_g_hold_a} yields
\begin{equation}
\label{eq:res_inter_pCN_g_hold_b}
Qd_\eta(x,y) \leq d_\eta(x,y)
- (1-\rho^\beta) d_\eta(x,y) \probaMarkovTildeDeux{x,y}
  {\setaccept \cap \setAbruit}
+ \int_{\hilbert} \Theta(x,y,z) \dint \gamma (z) \eqsp.
\end{equation}
Let us now define $h : \hilbert \to \R$ by
\begin{equation}
\label{eq:def_h}
h(z) = g(z) -g(\rho z) \eqsp.
\end{equation}
We bound from below $\probaMarkovTilde{x,y}[\setaccept \cap \setAbruit]$. Since $U_1$ is independent of $Z_1$, it follows
that
\[
\probaMarkovTilde{x,y}[\setaccept \cap \setAbruit]  \geq \expeMarkovTilde{x,y}{\Psi_\wedge(X_0,Y_0,Z_1)  \1_{\setAbruit}} \eqsp.
\]
By \autoref{hyp:pCN}, for all $z \in \hilbert$ such that $\sqrt{1-\rho^2}\norm{z} \leq 1$, it holds
for $\varpi \in \hilbert$, $g(\varpi) - g(\rho \varpi + \sqrt{1-\rho^2} z) \geq h(\varpi) - C_g$.
Then,
\begin{equation*}
\Psi_\wedge(x,y,z)  \geq 1\wedge( \rme^{-C_g} \rme^{h(x)}) \wedge (\rme^{-C_g} \rme^{h(y)} )
\geq \rme^{-C_g} \parentheseDeux{1 \wedge \rme^{h(x) \wedge h(y)}} \eqsp.
\end{equation*}
Therefore,
\begin{equation}
\label{eq:res_inter_pCN_g_hold_e}
\probaMarkovTilde{x,y}[\setaccept \cap \setAbruit] \geq
\rme^{-C_g} \parentheseDeux{1 \wedge \rme^{h(x)\wedge h(y)}}\probaMarkovTilde{x,y}[\setAbruit] \eqsp.
\end{equation}
We now upper bound the integral term in \eqref{eq:res_inter_pCN_g_hold_b}.
For $x,y \in \hilbert$, define the  partition of $\hilbert$,
\begin{align*}
\Kfrac_1(x,y) &= \{z \in \hilbert: \alpha(x, \rho x + \sqrt{1-\rho^2} z) =
\alpha(y, \rho y + \sqrt{1-\rho^2} z) =1\} \\
 \Kfrac_2(x,y) &= \{z \in \hilbert: \alpha(x, \rho x + \sqrt{1-\rho^2} z) =
1 > \alpha(y, \rho y + \sqrt{1-\rho^2} z)\} \\
 \Kfrac_3(x,y) &= \{z \in \hilbert: \alpha(y, \rho y + \sqrt{1-\rho^2} z) =
1 > \alpha(x, \rho x + \sqrt{1-\rho^2} z)\} \\
\Kfrac_4(x,y) &= \{z \in \hilbert: \alpha(y, \rho y + \sqrt{1-\rho^2} z) <
1 \text{ and }  \alpha(x, \rho x + \sqrt{1-\rho^2} z) < 1\} \eqsp.
\end{align*}
Since on  $\Kfrac_1(x,y)$, $\Theta(x,y,z) = 0$,
\begin{equation}
\label{eq:debut_partition}
\int_\hilbert \Theta(x,y,z) \dint \gamma ( z)=
\sum_{j=2} ^4 \int_{\Kfrac_j(x,y)} \Theta(x,y,z) \dint \gamma ( z) \eqsp.
\end{equation}
For any $a,b >0$, we have
$\abs{a-b} =  (a \vee b ) \parentheseDeux{1 - \parenthese{(a/b) \wedge (b/a)}}$.
Upon noting that $1 - \rme^{-t} \leq t $ for any $t \geq 0$, we have
\begin{equation*}
\Theta(x,y,z) \leq \Psi_{\vee}(x,y,z) \left\vert g(y) - g(x) -g( \rho y + \sqrt{1-\rho^2} z) +g( \rho x + \sqrt{1-\rho^2} z)  \right\vert \1_{\cup_{i=2}^4 \Kfrac_i(x,y)} ( z )  \eqsp.
\end{equation*}
By \autoref{hyp:pCN}, this yields, for $x,y \in \hilbert$ such that $d_\eta(x,y) < 1$,
\begin{equation}
\label{eq:bound-Delta}
\Theta(x,y,z) \leq 2 C_g \norm{y-x}^\beta \Psi_{\vee}(x,y,z) \leq  2  C_g \eta d_\eta(x,y) \Psi_{\vee}(x,y,z) \eqsp.
\end{equation}
On $\Kfrac_2(x,y)$, $g(x) > g(\rho x + \sqrt{1-\rho^2} z)$ and, together with the definition~\eqref{eq:def_h},  this implies that $h(x) \geq g(\rho x + \sqrt{1-\rho^2} z) - g(\rho x )$. Therefore, since under \autoref{hyp:pCN}, $h(x)
\geq -C_g (1-\rho^2)^{\beta/2} \norm{z}^\beta$ we get
\begin{multline}
\label{eq:partition_1}
\int_{\Kfrac_2(x,y)} \Theta(x,y,z) \dint \gamma (z ) \leq 2 C_g \eta d_\eta(x,y)
\int_{\Kfrac_2(x,y)} \dint \gamma ( z ) \\
\leq 2 C_g \eta d_\eta(x,y) \defEns{\parentheseDeux{\rme^{h(x) } \int_{\Kfrac_2(x,y)} \rme^{C_g
    (1-\rho^2)^{\beta/2}\norm{z}^\beta} \dint \gamma ( z )} \wedge 1 }
\leq C_I \eta d_\eta(x,y) \defEns{\rme^{h(x)} \wedge 1} \eqsp,
\end{multline}
for a constant $C_I$, which is finite
according to \autoref{theo:Fernique}.
By symmetry, on $\Kfrac_3(x,y)$,
\begin{equation}
  \label{eq:partition_2}
  \int_{\Kfrac_3(x,y)} \Theta(x,y,z) \dint \gamma (z ) \leq C_I \eta d_\eta(x,y)
  \defEns{\rme^{h(y)} \wedge 1} \eqsp.
  \end{equation}
On $\Kfrac_4(x,y)$, using \autoref{hyp:pCN},
\[
\alpha(x, \rho x + \sqrt{1-\rho^2 } z ) = \rme^{g(x) - g(\rho x + \sqrt{1-\rho^2} z )} \wedge 1
\leq  \parenthese{\rme^{h(x)} \rme^{C_g (1-\rho^2)^{\beta/2}\norm{z}^\beta}} \wedge 1 \eqsp;
\]
and by symmetry, we obtain a similar upper bound for $\alpha(y, \rho y +\sqrt{1-\rho^2}z)$.
Since $\rme^{C_g (1-\rho^2)^{\beta/2}\norm{z}^\beta} \geq 1$, these two inequalities imply
$\Psi_\vee(x,y,z ) \leq  \rme^{C_g (1-\rho^2)^{\beta/2}\norm{z}^\beta}(\rme^{h(x) \vee h(y) }\wedge 1)$.
Hence, using again \eqref{eq:bound-Delta} and \autoref{theo:Fernique}, there exists $C_I < \plusinfty$ such that
\begin{equation}
\label{eq:partition_3}
\int_{\Kfrac_4(x,y)} \Theta(x,y,z) \dint \gamma(z) \leq C_I \eta d_\eta(x,y) \parentheseDeux{\rme^{h(x) \vee
      h(y) } \wedge 1} \eqsp.
\end{equation}
Plugging \eqref{eq:partition_1}, \eqref{eq:partition_2}, \eqref{eq:partition_3} into
\eqref{eq:debut_partition}, we finally obtain
\[
    \int_\hilbert \Theta (x,y,z) \dint \gamma (z) \leq 3 C_I \eta d_\eta(x,y) \parentheseDeux{\rme^{h(x) \vee
        h(y) } \wedge 1} \eqsp.
\]
Finally, under \autoref{hyp:pCN}, for every $x,y \in \hilbert$ such that $d_\eta(x,y) <1$, $\abs{h(x) - h(y) } \leq 2 C_g \norm{x-y}^\beta \leq 2 C_g \eta^\beta$.
Therefore $\rme^{h(x) \vee h(y) } \wedge 1 \leq  \rme^{2 C_g \eta^\beta} \parentheseDeux{\rme^{h(x) \wedge h(y)} \wedge 1}$
and
\begin{equation}
\label{eq:fin_partition}
\int_\hilbert \Theta (x,y,z) \dint \gamma(z)  \leq   3 C_I \rme^{2 C_g \eta^\beta} \, \eta d_\eta(x,y) \parentheseDeux{\rme^{h(x) \wedge h(y) } \wedge 1} \eqsp.
\end{equation}
Plugging \eqref{eq:res_inter_pCN_g_hold_e} and \eqref{eq:fin_partition} in
(\ref{eq:res_inter_pCN_g_hold_b}) yields
\begin{equation*}
Qd_\eta(x,y) \leq d_\eta(x,y) \left(1 - \left\{ (1-\rho^\beta)\rme^{-C_g}\probaMarkovTilde{x,y}[\setAbruit]-  3 C_I \rme^{2 C_g \eta^\beta} \eta
\right\} \parentheseDeux{\rme^{h(x) \wedge h(y)} \wedge 1} \right) \eqsp.
\end{equation*}
Note that $M=\probaMarkovTilde{x,y}[\setAbruit]$ is a positive quantity that does not depend on $x,y$.
Therefore, we may choose $\eta$ sufficiently small so that, for every $x,y \in \hilbert$ satisfying $d_\eta(x,y) < 1$,
\begin{equation}
\label{eq:res_inter_pCN_g_hold_g}
Qd_\eta(x,y) \leq d_\eta(x,y) \left(1 - (1/2)(1-\rho^\beta)\rme^{-C_g}M\parentheseDeux{\rme^{h(x) \wedge h(y)} \wedge 1} \right) \eqsp,
\end{equation}
which implies \autoref{lem:pCN_contrafaible_g_hold}-\eqref{eq:contraction_strict_g_hold} upon noting that, under the stated assumptions,  $\inf_{\boule{0}{L}} h > -\infty$.

We now consider  \eqref{eq:contraction_large_g_hold}. For every $x,y \in \hilbert$, $d_\eta(x,y) \leq 1$, which implies that $Q d_\eta(x,y) \leq 1$. For every $x,y \in \hilbert$ such that $d_\eta(x,y)=1$, $Qd_\eta(x,y) \leq 1=d_\eta(x,y)$. If $d_\eta(x,y) < 1$, \eqref{eq:res_inter_pCN_g_hold_g} shows that
$Qd_\eta(x,y) \leq d_\eta(x,y)$.
\end{proof}

\begin{proof}[Proof of \autoref{lem:g_simple_smallness_hold}]
Let $\sequencenDouble{X}{Y}$ be a Markov chain with Markov kernel $Q$ given by
\eqref{eq:def_Q_pCN}. We denote for all $n \in \N^*$, $Z_n$ and $U_n$, respectively the
common Gaussian variable and uniform variable, used in the definition $(X_n,Y_n)$. Note that by
definition the variables $\defEns{Z_n , U_n ; \eqsp  n \in \N }$
are independent.

Since $\defEns{x: V(x) \leq u } = \defEns{x: \norm{x} \leq (s\log(u))^{1/2}}$, for $u \geq 1$, we only prove that for
all $L >0$, there exist $\ell \in \N^*$ and $\epsilon > 0$ such that
$\boulefermee{0}{L}^2$ is a $(\ell, \epsilon , d_\eta)$-coupling set.
By \autoref{lem:pCN_contrafaible_g_hold}-\eqref{eq:contraction_strict_g_hold}, for any  $L >0 $, there exists $k(Q,L,\eta) \in (0,1)$
such that for any  $x,y \in \boulefermee{0}{L}$  satisfying  $d_\eta(x,y) <
1$, $Qd_\eta(x,y) \leq k(Q,L,\eta) d_\eta(x,y)$.  Then by \autoref{lem:pCN_contrafaible_g_hold}-\eqref{eq:contraction_large_g_hold} , for every $n \in \N^*$,
\begin{equation}
  \label{eq:pCN_final_2}
Q^nd_\eta(x,y) \leq Q^{n-1}d_\eta(x,y) \leq \cdots \leq k(Q,L,\eta) d_\eta(x,y) \eqsp.
\end{equation}
Consider now the case $d_\eta(x,y) = 1$.
Let $n \in \N^*$ and denote for
all $1 \leq i \leq n$
$\setaccept_i = \defEns{U_i \leq \Psi_\wedge(X_{i-1},Y_{i-1},Z_i)}$ and $\setacceptdeux_i(n) = \bigcap_{1 \leq j \leq i}
\parenthese{\{\sqrt{1-\rho^2}\norm{Z_j} \leq L/ n\} \cap \setaccept_j}$ where $\Psi_\wedge$ is defined in \eqref{eq:definition-Psi-wedge}

On the event  $\setacceptdeux_i(n)$, $X_j = \rho X_{j-1} + \sqrt{1-\rho^2} Z_j$ and
$Y_j = \rho Y_{j-1} + \sqrt{1-\rho^2} Z_j$ for all $1 \leq j \leq i$. Then,
since $d_\eta(x,y) \leq \eta^{-1} \norm{x-y}^\beta$, on
$\setacceptdeux_n(n)$ it holds $d_\eta(X_n,Y_n) \leq \eta^{-1} \rho^{\beta n }
\norm{X_0-Y_0}^\beta$. This inequality and $d_\eta(x,y) \leq 1$ yield 
\begin{align}
\nonumber
Q^nd_\eta(x,y)
&= \expeMarkovTilde{x,y}{d_\eta(X_n,Y_n)( \1_{\setacceptdeux_n(n)} + \1_{(\setacceptdeux_n(n))^c})} \leq \eta^{-1} \rho^{\beta n } \norm{x-y}^\beta  \probaMarkovTilde{x,y}[\setacceptdeux_n(n)] + \probaMarkovTilde{x,y}[(\setacceptdeux_n(n))^c] \\
\label{eq:final_pCN}
& \leq \eta^{-1} \rho^{\beta n } (2L)^\beta \probaMarkovTilde{x,y}[\setacceptdeux_n(n)] +
\probaMarkovTilde{x,y}[(\setacceptdeux_n(n))^c]   \leq  1+ \left(\eta^{-1}\rho^{\beta n} (2 L)^\beta -1 \right) \, \probaMarkovTilde{x,y}[\setacceptdeux_n(n)]  \eqsp.
\end{align}
As $\rho \in \coint{0,1}$, there exists $\ell$ such that, $\eta^{-1} \rho^{\beta \ell }
(2L)^\beta < 1$. It remains to lower bound
$\probaMarkovTildeDeux{x,y}{\setacceptdeux_\ell(\ell)}$ by a positive constant to
conclude. Since the random variables $\defEns{ (Z_i , U_i) ; \eqsp i \in \N^*}$ are independent, we get
\begin{multline*}
\probaMarkovTilde{x,y}[\setacceptdeux_\ell(\ell)] =
\probaMarkovTilde{x,y}[
\setacceptdeux_{\ell-1}(\ell) \cap \{\sqrt{1-\rho^2}\norm{Z_{\ell}} \leq L/\ell\}
]\\
\times
\expeMarkovTilde{x,y}{\Psi_\wedge(X_{\ell-1}, Y_{\ell-1}, Z_\ell)
\sachant \setacceptdeux_{\ell-1}(\ell) \cap \{\sqrt{1-\rho^2}\norm{Z_{\ell}} \leq L/\ell\}} \eqsp.
\end{multline*}
For all $1 \leq i \leq \ell$, on the event $ \bigcap_{j \leq i} \left\{\sqrt{1-\rho^2}\norm{Z_{j}} \leq L/\ell \right\}$, it holds
\begin{equation*}
\Psi_\wedge(X_{i-1},Y_{i-1},Z_i) \geq \exp \left( -\sup_{z \in \boule{0}{2L}} g(z) + \inf_{z \in \boule{0}{2L}} g(z) \right) = \delta \eqsp,
\end{equation*}
where $\delta \in (0,1)$. Therefore, since $Z_\ell$ is independent of
$\setacceptdeux_{\ell-1}(\ell)$, we have
\begin{align*}
\probaMarkovTilde{x,y}[\setacceptdeux_\ell(\ell)] \geq \delta
\probaMarkovTilde{x,y}[\setacceptdeux_{\ell-1}(\ell)] \ \probaMarkovTilde{x,y}[\sqrt{1-\rho^2}\norm{Z_{\ell}} \leq L/\ell] \eqsp.
\end{align*}
An immediate induction leads to $\probaMarkovTilde{x,y}[\setacceptdeux_\ell(\ell)]  \geq
\left(\probaMarkovTilde{x,y}[\sqrt{1-\rho^2}\norm{Z_1} \leq  L/\ell] \right)^\ell \delta^\ell$.
Plugging this result in \eqref{eq:final_pCN} and \eqref{eq:pCN_final_2} implies there exists $\zeta \in \ooint{0,1}$ such that  for all $x,y \in \boulefermee{0}{L}$, $Q^\ell d_\eta(x,y) \leq \zeta d_\eta(x,y)$.
\end{proof}

%%% Local Variables:
%%% mode: latex
%%% TeX-master: "main"
%%% End:

\appendix
\section{Wasserstein distance: some useful properties}

Let $(E, d)$ be a Polish space, with $d$ bounded by $1$. Then, for all $\mu,\nu \in \Pens(E)$:
$W_{d}(\mu,\nu) \leq   W_{d_0}(\mu,\nu)$ since for all $x,y \in E$, $d(x,y) \leq d_0(x,y)$.
Hence when $d$ is bounded by $1$, the convergence in total variation distance implies
the convergence in the Wasserstein metric $W_d$.

% \begin{lemma}[\protect{\cite[Theorem~4.8]{VillaniTransport}}]
% \label{lem:jensen_wl}
% Let $(E,d)$ be a Polish space. Let $d$ be a distance on $E$ bounded be $1$, lower semicontinuous with respect to $\dist$. Let $P$ be a Markov kernel on $(E, \B(E))$.
% For any $\mu, \nu \in
% \Pens(E)$
% \[
% W_d(\mu P , \nu P) \leq \inf_{\lambda \in \couplage{\mu}{\nu}}\int_{E \times E}
% W_d (P(x,\cdot),P(y,\cdot)) \, \lambda ( \dint x , \dint y) \eqsp.
% \]
% \end{lemma}

%\begin{lemma}
%\label{lem:contraction_rev_mu_nu}
%Let $(E,d)$ be a Polish space and let $P$ be Markov kernel on $(E,\B(E))$.  Assume that
%$d$  is weakly contracting for $P$, \ie\ $W_d(P(x,\cdot),P(y,\cdot)) \leq d(x,y)$ for all $(x,y) \in E^2$.
%Then for all $\mu, \nu \in \Pens(E)$, $W_d(\mu P , \nu P ) \leq W_d (\mu , \nu) $.
%\end{lemma}
%\begin{proof}
%Follows from \autoref{lem:jensen_wl}.
%\end{proof}

\begin{lemma}
  \label{lem:contract_coupling_weak_contraction}
  Let $(E,d)$ be a Polish space, with $d$ bounded by $1$, and let $P$ be a
  Markov kernel on $(E, \B(E))$. Let $Q$ be a coupling kernel for $P$.
\begin{enumerate} [(i)]
\item \label{lem:contraction_rev_mu_nu_2} Then, for all probability measures
  $\mu, \nu \in \Pens(E)$ and $n \in \nset^*$,
\[
W_d(\mu P^n, \nu P^n) \leq \inf_{\lambda \in \couplage{\mu}{\nu}} \int_{E \times E} Q^nd(z,t) \rmd \lambda(z,t) \eqsp.
\]
\item \label{lem:contract_rev_3} If in addition $Q$ is a $d$-weak-contraction,
  then for all $x,y \in E$, $ W_d(P(x,\cdot),P(y,\cdot)) \leq d(x,y)$ and for
  all probability measures $\mu, \nu \in \Pens(E)$,
\begin{equation*}
W_d(\mu P , \nu P ) \leq W_d (\mu , \nu) \eqsp.
\end{equation*}
\end{enumerate}
\end{lemma}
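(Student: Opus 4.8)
The plan is to prove part~\eqref{lem:contraction_rev_mu_nu_2} by an explicit coupling construction — pushing an arbitrary coupling of $\mu$ and $\nu$ through the iterated coupling kernel $Q^n$ — and then to obtain part~\eqref{lem:contract_rev_3} as a short consequence of part~\eqref{lem:contraction_rev_mu_nu_2} together with the pointwise inequality $Qd\leq d$.

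For~\eqref{lem:contraction_rev_mu_nu_2}, I would first record, by induction on $n$ (the case $n=1$ being the hypothesis), that $Q^n$ is a coupling kernel for $P^n$: for $A\in\B(E)$,
\[
Q^n((x,y),A\times E)=\int_{E\times E}Q((z,t),A\times E)\,Q^{n-1}((x,y),\dint z\,\dint t)=\int_E P(z,A)\,P^{n-1}(x,\dint z)=P^n(x,A)\eqsp,
\]
and symmetrically for the second marginal; measurability of $(z,t)\mapsto Q^n((z,t),\cdot)$ is automatic since iterates of Markov kernels are Markov kernels. Next, for any $\lambda\in\couplage{\mu}{\nu}$, I would define the probability measure $\tilde\lambda(C)=\int_{E\times E}Q^n((z,t),C)\,\lambda(\dint z,\dint t)$ on $\B(E\times E)$; using the previous step and Tonelli's theorem, $\tilde\lambda(A\times E)=\int_E P^n(z,A)\,\mu(\dint z)=(\mu P^n)(A)$ and likewise $\tilde\lambda(E\times A)=(\nu P^n)(A)$, so $\tilde\lambda\in\couplage{\mu P^n}{\nu P^n}$. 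Then by the definition~\eqref{def_wasser} of $W_d$ and Tonelli again,
\[
W_d(\mu P^n,\nu P^n)\leq\int_{E\times E}d(x,y)\,\tilde\lambda(\dint x,\dint y)=\int_{E\times E}\Bigl(\int_{E\times E}d(x,y)\,Q^n((z,t),\dint x\,\dint y)\Bigr)\lambda(\dint z,\dint t)=\int_{E\times E}Q^n d(z,t)\,\lambda(\dint z,\dint t)\eqsp,
\]
and taking the infimum over $\lambda\in\couplage{\mu}{\nu}$ yields the claim.

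For~\eqref{lem:contract_rev_3}, since $Q((x,y),\cdot)\in\couplage{P(x,\cdot)}{P(y,\cdot)}$ is one admissible coupling, $W_d(P(x,\cdot),P(y,\cdot))\leq\int_{E\times E}d(u,v)\,Q((x,y),\dint u\,\dint v)=Qd(x,y)\leq d(x,y)$ by the $d$-weak-contraction hypothesis. For measures, applying part~\eqref{lem:contraction_rev_mu_nu_2} with $n=1$ and then using $Qd\leq d$ pointwise gives $W_d(\mu P,\nu P)\leq\inf_{\lambda\in\couplage{\mu}{\nu}}\int_{E\times E}Qd(z,t)\,\lambda(\dint z,\dint t)\leq\inf_{\lambda\in\couplage{\mu}{\nu}}\int_{E\times E}d(z,t)\,\lambda(\dint z,\dint t)=W_d(\mu,\nu)$. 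No genuine obstacle arises: the only points requiring a line of care are the measurability of the kernel iterates and the two Tonelli interchanges, and the conceptual content is simply that pushing a coupling of $(\mu,\nu)$ through $Q^n$ produces a coupling of $(\mu P^n,\nu P^n)$ whose transport cost is exactly $\int Q^n d\,\dint\lambda$.
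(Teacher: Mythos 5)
Your proposal is correct and follows the same route as the paper: the key point in both is that $\lambda Q^n$ is a coupling of $\mu P^n$ and $\nu P^n$ for any $\lambda\in\couplage{\mu}{\nu}$, with part~\eqref{lem:contract_rev_3} then deduced from part~\eqref{lem:contraction_rev_mu_nu_2} (with $n=1$) and the pointwise bound $Qd\leq d$. You merely spell out the induction showing $Q^n$ is a coupling kernel for $P^n$ and the Tonelli interchanges, which the paper leaves implicit.
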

\begin{proof}
  \eqref{lem:contraction_rev_mu_nu_2} For every $\lambda \in
  \couplage{\mu}{\nu}$, $\lambda Q^n$ is a coupling of $\mu P^n$ and $\nu P^n$.
  This yields the result.  Consider now \eqref{lem:contract_rev_3}. Using
  \eqref{lem:contraction_rev_mu_nu_2}, we get
\begin{equation*}
W_d(\mu P, \nu P)
\leq \inf_{\lambda \in \couplage{\mu}{\nu}} \int_{E \times E} Q d(z,t) \rmd \lambda(z,t) \leq \inf_{\lambda \in \couplage{\mu}{\nu}}   \int_{E \times E} d(z,t) \rmd \lambda(z,t) \leq W_d(\mu,\nu) \eqsp.
\end{equation*}
\end{proof}

\section{Subgeometric functions and sequences}
\begin{lemma}
\label{lem:suite_sous_geometrique}
Let $r \in \Lambda_0$ and $R$ be given by \eqref{eq:definition:R}.
\begin{enumerate}[(i)]
\item \label{lem:suite_sous_geo_1} For all $t,v \in \R_+$, $r(t+v) \leq r(t) r(v)$.
\item \label{lem:suite_sous_geo_2} $R$ is differentiable, convex and increasing
  to $\plusinfty$.
\item \label{lem:suite_sous_geo3} $ \lim_{t \to \infty} r(t) / R(t) =0$.
\item \label{lem:suite_sous_geo_4} There exists a constant $C$ such that for
  any $t,v \in \rset_+$, $R(t+v) \leq C R(t) R(v)$. 
\item \label{lem:suite_sous_geo_5}  $\sup_k R(k) / \sum_{i=0}^{k-1}r(i)
  < \infty$.
\end{enumerate}
\end{lemma}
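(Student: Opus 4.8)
The plan is to establish the five items in the order (i), (ii), then the auxiliary bound $\sup_{t\ge 0} r(t)/R(t)<\infty$, then (iv) and (v), treating (iii) separately as the only point requiring a genuine idea. Throughout I write $f:=\log r$, so that membership of $r$ in $\Lambda_0$ means exactly: $f$ is non-decreasing, $t\mapsto f(t)/t$ is non-increasing, $f(t)/t\to 0$, and $r=e^{f}\ge 2$. No external input is needed beyond these facts and the definition $R(t)=1+\int_0^t r(s)\,ds$.

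For (i), the hypothesis that $t\mapsto f(t)/t$ is non-increasing is subadditivity of $f$ in disguise: for $t,v>0$,
\[
 f(t+v)=t\,\frac{f(t+v)}{t+v}+v\,\frac{f(t+v)}{t+v}\le t\,\frac{f(t)}{t}+v\,\frac{f(v)}{v}=f(t)+f(v),
\]
so $r(t+v)=e^{f(t+v)}\le e^{f(t)}e^{f(v)}=r(t)r(v)$; the cases $t=0$ or $v=0$ are trivial since $r\ge 2>1$. For (ii), I would first observe that $r\in\Lambda_0$ is necessarily continuous on $(0,\infty)$, because a jump of $r$ would force $t\mapsto f(t)/t$ to jump upward, contradicting its monotonicity; hence $R$ is $C^{1}$ on $(0,\infty)$ with $R'=r$, it is convex because $R'=r$ is non-decreasing, strictly increasing because $r\ge 2>0$, and $R(t)\ge 1+2t\to\infty$.

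Next I would record the uniform bound $c_{0}:=\sup_{t\ge 0} r(t)/R(t)\le r(1)<\infty$: for $t\in[0,1]$ use $r(t)\le r(1)$ and $R(t)\ge 1$; for $t\ge 1$ use $R(t)\ge\int_{t-1}^{t} r(s)\,ds\ge r(t-1)$ and then $r(t)/R(t)\le r(t)/r(t-1)\le r(1)$ by (i). Item (iv) is then immediate: substituting $s=t+w$ and using (i) and $R(v)\ge 1$,
\[
 R(t+v)=R(t)+\int_{0}^{v} r(t+w)\,dw\le R(t)+r(t)\int_{0}^{v} r(w)\,dw\le R(t)+c_{0}R(t)R(v)\le(1+c_{0})R(t)R(v),
\]
so one may take $C=1+r(1)$. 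Item (v) follows from (iv): since $r$ is non-decreasing one has $r(i)\ge\int_{i-1}^{i} r(s)\,ds$ for $i\ge 1$ and $r(0)\ge 1$, whence $\sum_{i=0}^{k-1} r(i)\ge R(k-1)$, and therefore $R(k)/\sum_{i=0}^{k-1} r(i)\le R(k)/R(k-1)\le C\,R(1)$ for every $k\ge 1$.

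The remaining item (iii) is where the argument is not routine, and I expect it to be the main obstacle, since the crude estimates $R(t)\gtrsim t$ and $R(t)\gtrsim r(t-1)$ are both too weak. The right move is to bound $r$ from below on the whole block $[t/2,t]$ using the value of the exponent $g:=f/t$ at the right endpoint: since $g(t)=f(t)/t>0$ (as $r\ge 2$), $g$ is non-increasing, and $g(t)\to 0$, we get $r(s)=e^{s g(s)}\ge e^{s g(t)}$ for $s\in[t/2,t]$, hence
\[
 R(t)\ge\int_{t/2}^{t} e^{s g(t)}\,ds=\frac{e^{t g(t)}-e^{(t/2) g(t)}}{g(t)}=\frac{r(t)-\sqrt{r(t)}}{g(t)}.
\]
Dividing by $r(t)$ and using $r(t)\ge 2$ yields $R(t)/r(t)\ge(1-1/\sqrt{2})/g(t)\to+\infty$ as $t\to\infty$, i.e. $r(t)/R(t)\to 0$, which is (iii).
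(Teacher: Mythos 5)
Your proof is correct, and for items (i), (ii), (iv), (v) it essentially mirrors the paper's argument: (i) is the standard subadditivity of $\log r$ coming from the monotonicity of $t\mapsto \log r(t)/t$ (the paper simply cites Stone's lemma), (ii) is the same observation that $R'=r$ is non-decreasing (your remark that $r$ must in fact be continuous on $(0,\infty)$, since an upward jump of $r$ would force an upward jump of $\log r(t)/t$, is a nice justification the paper leaves implicit), and (v) is the same comparison $\sum_{i=0}^{k-1} r(i)\geq R(k-1)$ combined with (iv). The genuine divergence is item (iii), which is also the only nontrivial point. The paper proves it by a local, difference-quotient argument: writing $u(t)=\log r(t)/t$ and using its monotonicity to get $r(t+h)-r(t)\leq \epsilon h\, r(t)$ for $t\geq T(\epsilon)$, then letting $h\to 0$ in $(R(t+h)-R(t))/(hR(t))$ to conclude $r(t)/R(t)\leq \epsilon + C_T/R(t)$ and invoking $R(t)\to\infty$. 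You instead produce a direct global lower bound, $R(t)\geq \int_{t/2}^{t} e^{s\,g(t)}\,ds=(r(t)-\sqrt{r(t)})/g(t)$ with $g(t)=\log r(t)/t$, which yields the explicit quantitative estimate $r(t)/R(t)\leq g(t)/(1-1/\sqrt 2)$; this is cleaner, avoids the limiting/continuity subtleties of the paper's difference-quotient step, and gives a rate, not just convergence to zero. A further small structural difference is that your (iv) does not use (iii) at all (the paper derives (iv) from (i) and (iii)): the crude bound $\sup_t r(t)/R(t)\leq r(1)$, obtained from $R(t)\geq r(t-1)$ and (i), already suffices, so (iv)--(v) stand independently of the harder item (iii).
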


\begin{proof}
  \eqref{lem:suite_sous_geo_1} follows from \cite[Lemma~1]{Stone_sous_geo}. Consider now \eqref{lem:suite_sous_geo_2}.
  By definition, $r$ is
  non-decreasing, thus is bounded on every compact set; then, $R$ is
  continuous.  Moreover, it is differentiable and its derivative is $r$, which
  is non-decreasing. Then $R$ is convex. In addition $r(0) \geq 2$, thus $R$ is
  increasing to $\plusinfty$.
\eqref{lem:suite_sous_geo3}. Set $u(t) \eqdef \log(r(t))/t$. Since $r \in \Lambda_0$, the function $u$ is non increasing, which implies that, for every $h \in \ooint{0,1}$,
\begin{equation*}
\log \parenthese{1+ \{r(t+h)-r(t)\}/r(t)} =
\log \parenthese{r(t+h)/r(t)} = t(u(t+h) - u(t)) +  hu(t+h) \leq  hu(t+h) \eqsp.
\end{equation*}
 Since
$\lim_{t \to \plusinfty} u(t) = 0$, for all $\epsilon >0$, there exists $T\in
\rset_+$ such that for all $t \geq T$ and $h \in \ooint{0,1}$,
$\parenthese{r(t+h)-r(t)} \leq \epsilon h r(t) $.
% By integrating this relation we get:
% \[
% \int_T ^t (r(s+h) -r(s) ) \rmd s  \leq h \epsilon (R(t) - R(T)) \leq h \epsilon R(t) \eqsp,
% \]
% \[
% (R(t+h) - R(t) )/(h R(t)) = ( \int_T ^t (r(s+h) -r(s) ) \rmd s )/(h R(t)) + ( \int_T ^{T+h} r(s)  \rmd s )/(h R(t)) \leq \epsilon + r(T+1) / R(t) \eqsp.
% \]
Therefore for all $t \geq T$ and $h \in \ooint{0,1}$, $(R(t+h) - R(t) )/(h
R(t)) \leq \epsilon + r(T+1) / R(t)$.  Taking $h \rightarrow 0$ it follows $
r(t) / R(t) \leq \epsilon + r(T+1)/R(t)$, for all $t \geq T$. The proof is
concluded by \eqref{lem:suite_sous_geo_2}.  \eqref{lem:suite_sous_geo_4}
follows from \eqref{lem:suite_sous_geo_1} and \eqref{lem:suite_sous_geo3}.
Finally, for \eqref{lem:suite_sous_geo_5}, the upper bound follows from
\eqref{lem:suite_sous_geo_4} and $R(k-1) \leq 1+ \sum_{i=0}^{k-1}r(i)$.
\end{proof}

\bibliographystyle{plain}
\bibliography{biblio}

\end{document}